\newtheorem{theorem}{Theorem}[section]
\newtheorem{proposition}[theorem]{Proposition}
\newtheorem{lemma}[theorem]{Lemma}
\theoremstyle{definition}
\newtheorem{definition}[theorem]{Definition}
\theoremstyle{remark}
\newtheorem{example}[theorem]{Example}
\newtheorem{remark}[theorem]{Remark}
\newcommand{\GL}{\operatorname{GL}}
\newcommand{\SL}{\operatorname{SL}}
\newcommand{\Sp}{\operatorname{Sp}}
\newcommand{\GSp}{\operatorname{GSp}}
\newcommand{\Sl}{\operatorname{SL}}
\newcommand{\G}{\mathbb{G}}
\newcommand{\R}{\mathbb{R}}
\newcommand{\Q}{\mathbb{Q}}
\newcommand{\F}{\mathbb{F}}
\newcommand{\ri}{\mathcal O}
\newcommand{\Z}{\mathbb Z}
\newcommand{\A}{\mathbb A}
\newcommand{\C}{\mathbb C}
\newcommand{\bT}{\mathbf T}
\newcommand{\bV}{\mathbf V}
\newcommand{\cT}{\mathcal T}
\newcommand{\bG}{\mathbf G}
\newcommand{\bZ}{\mathbf Z}
\newcommand{\vol}{\operatorname{vol}}
\newcommand{\ord}{\operatorname{ord}}
\newcommand{\tr}{\operatorname{Tr}}
\newcommand{\Tr}{\operatorname{Tr}}
\newcommand{\Ad}{\operatorname{Ad}}
\newcommand{\Lie}{\operatorname{Lie}}
\newcommand{\ad}{\operatorname{ad}}
\newcommand{\fg}{\mathfrak {g}}
\newcommand{\ft}{\mathfrak {t}}
\newcommand{\fc}{\mathfrak {c}}
\newcommand{\fsl}{\mathfrak {sl}}
\newcommand{\fgl}{\mathfrak {gl}}
\newcommand{\fsp}{\mathfrak {sp}}
\newcommand{\fso}{\mathfrak {so}}
\newcommand{\spec}{\operatorname{Spec}}
\newcommand{\rank}{\operatorname{Rank}}
\newcommand{\diag}{\operatorname{diag}}
\newcommand{\geom}{{\operatorname{geom}}}
\newcommand{\charp}{{\operatorname{char.p.}}}
\newcommand{\res}{\operatorname{Res}}
\newcommand{\rk}{\operatorname{rank}}
\newcommand{\der}{{\operatorname{der}}}
\newcommand{\Jac}{{\operatorname{Jac}}}
\newcommand{\Gal}{\operatorname{Gal}}
\newcommand{\fr}{\operatorname{Fr}}
\newcommand{\can}{{\operatorname{can}}}
\newcommand{\fin}{{\operatorname{fin}}}
\newcommand{\spl}{{\operatorname{spl}}}
\newcommand{\rss}{{\operatorname{rss}}}
\newcommand{\sep}{{\operatorname{sep}}}
\newcommand{\SO}{\mathrm {SO}}
\newcommand{\be}{\mathbf {e}}
\newcommand{\bff}{\mathbf {f}}
\newcommand{\bh}{\mathbf {h}}
\title{Orbital integrals and normalizations of measures}
\author{Julia Gordon}
\address{Department of Mathematics \\
University of British Columbia\\
121-1984 Mathematics Rd., Vancouver BC V6T 1Z2, Canada\\
gor@math.ubc.ca}
\begin{document}

\maketitle

\begin{abstract}
This note provides an informal introduction, with examples, to some technical aspects of the re-normalization of 
measures on orbital integrals used in the work of Langlands, Frenkel-Langlands-Ng\^o, and Altug on Beyond Endoscopy. 
In particular, we survey different relevant measures on algebraic tori and  
explain the connection with the Tamagawa numbers. We work out the example of $\GL_2$ in complete detail. 
The Appendix by Matthew Koster illustrates, for the Lie algebras $\fsl_2$ and $\fso_3$, the relation between the 
so-called geometric measure on the orbits and Kirillov's measure on 
co-adjoint orbits in the linear dual of the Lie algebra. 
\end{abstract}

\section{Introduction}
Haar measure on a locally compact topological group is unique up to a constant. In many situations the normalization matters (as we shall see below). In particular, it seems that some measures are more convenient than others in the approach to Beyond Endoscopy by Frenkel-Langlands-Ng\^o, Arthur, and Altug. The main goal of this note is to track some of the normalizations of the measures that arise in the literature on orbital integrals on reductive groups over non-Archimedean local fields, and provide an introduction to this aspect of Altug's lectures.  
In particular, our first goal is to give an exposition of the formula (3.31) in \cite{langlands-frenkel-ngo}, which is the first technical step towards Poisson summation. 
The second goal is to summarize the relationship between measures on $p$-adic manifolds, point-counts over the residue field,  and local $L$-functions. These relations are scattered over the literature, and the aim here is to collect the references in one place, and provide some examples.

Fundamentally, there are two approaches to choosing a normalization of a Haar measure on the set of $F$-points of an algebraic group 
for a local field $F$: one can consider a measure associated with a specified differential form; or one can choose a specific compact subgroup and  prescribe the volume of that subgroup. As we shall see, both approaches have certain advantages, and converting between these two kinds of normalizations can be surprisingly tricky. All the objects we consider  here will be affine algebraic varieties, and we will only consider algebraic differential forms.  To avoid confusion, we will try to consistently denote varieties by bold letters, while various sets of rational points will be denotes by letters in the usual font.

In this context, we start, in \S \ref{sec:vf}, with a quick survey of A.Weil's definition of the measure on the set $\bV(F)$ of $F$-points of an affine variety $\bV$ 
associated with an algebraic  volume form on $\bV$. We discuss the relation between this measure and counting rational points of $\bV$ over the residue field, and the results 
on the various measures on $\bT(F)$ for an algebraic torus $\bT$ that follow. Next in \S \ref{sec:oi}, we compare 
two natural measures on  the orbits of the adjoint action of an algebraic group $\bG(F)$ on itself. This comparison is the main reason for writing this note. More specifically, we introduce Steinberg map and derive the relationship between two measures on the orbits: 
the so-called \emph{geometric} measure,  obtained by considering the stable orbits as fibres of Steinberg map,  and the measure obtained as a quotient of two natural measures coming from volume forms. In \S \ref{sec:canon}, we combine the outcome with the results 
of \S\ref{sec:vf} to obtain the relationship between the geometric measure and the so-called \emph{canonical} measure (which is the one most frequently used to define orbital integrals). We do the $\GL_2$ example in detail. 
Finally, in \S \ref{sec:global}, we assemble the local results into a global calculation, first, in the context of the analytic class number formula, and then in the case of Eichler-Selberg Trace Formula.   

\section*{Acknowledgment and disclaimer.} These notes would not have been possible without many conversations with W. Casselman over the years; in particular, among many other insights, I thank him for pointing out the key point of \S\ref{subsub:general}. I learned most of the material presented in these notes while working on a seemingly unrelated project with 
Jeff Achter, S. Ali Altug, and Luis Garcia.  I am very grateful to these people. 
One of the things that surprised me during that work was the difficulty of doing calculations with Haar measure and tracking its normalizations in the literature. The goal of these notes is to illustrate practical ways of doing such calculations, and provide references (and emphasize the normalizations of the measures in these sources) to the best of my ability at the moment. I am not aiming at presenting general (or rigorous) proofs here. 

My sincere gratitude also goes to the organizers of the Program ``On the Langlands Program: Endoscopy and Beyond'' at NUS for inviting me to give these lectures and for their patience and encouragement during the preparation of these notes; and to the referee for many helpful suggestions.

\section{Volume forms and point-counting}\label{sec:vf}
Everywhere in this note, $F$ stands for a non-Archimedean local field (of characteristic zero or positive characteristic), with the ring of integers $\ri_F$ and residue field $k_F$ of cardinality $q$. We denote its \emph{uniformizing element} (a \emph{uniformizer} for short)
by $\varpi$; by definition, $\varpi$ is a generator of the maximal ideal of $\ri_F$. We denote the normalized valuation on $F$ by $\ord$; thus, $\ord(\varpi)=1$.

\subsection{The measure on the affine line} We start with choosing, once and for all, an additive Haar measure on the affine line.
For a non-Archimedean local field $F$, we normalize the additive Haar measure on $F$ so that $\vol(\ri_F)=1$. 
For an affine line over $F$, given a choice of the coordinate $x$,  there is an invariant differential form $dx$; we declare that the associated measure $|dx|$ on $\A^1(F)$ also gives volume $1$ to the ring of integers (this choice is analogous 
to setting up the `unit interval' on the $x$-axis over the reals and declaring that the interval $[0,1]$ has `volume' (\emph{i.e.}, length) $1$ with respect to the measure $|dx|$).
Now that this choice is made, any non-vanishing top degree differential form $\omega$ (defined over $F$ or any finite extension of $F$) 
on a $d$-dimensional $F$-variety $\bV$ determines a 
measure $|\omega|$ on the set $\bV(F)$ of its $F$-points, where $|\cdot|$ stands for the absolute value on $F$ 
(respectively, its unique extension to the field of definition of $\omega$). Thus, our definition of the measure associated with a volume form is such that for the additive group 
$\G_a$ both approaches to normalizing the measure give the same natural measure: the measure that gives $\G_a(\ri_F)$ volume $1$. 
We shall see that such a measure is closely related to counting points over the residue field.

\begin{remark} Note that our choice of the normalization of the measure on the affine line differs from that of \cite{langlands-frenkel-ngo}: if the local field under consideration arises as a completion of a global field at a finite place, we normalize the Haar measure on it so that the ring of integers has volume $1$, whereas in \cite{langlands-frenkel-ngo}, the authors fix a  choice of  a character of the global field and normalize the measure on every completion so that it is self-dual with respect to that character. This choice is important for the Poisson summation formula; however, as the authors point out, this makes all the measures locally non-canonical. Since our exposition is local, we chose to omit this complication.
However, this means that given any variety $\bV$ defined over a global field $K$, 
our calculations of measures on $\bV(K_v)$ at every place $v$ differ from those of \cite{langlands-frenkel-ngo} by 
$N{\mathfrak d}_v^{\dim(\bV)/2} =|\Delta_{K/\Q}|_v^{\dim(\bV)/2}$, where $\mathfrak d$ is the different, $N:K\to\Q$ is the norm map,  and $\Delta_{K/\Q}$ is the discriminant of $K$.\footnote{more precisely, we quote (approximately) from \cite{langlands-frenkel-ngo}: `because of this there are no canonical local calculations. The ideal ${\mathfrak d}_v$ is however equal to $\ri_v$  almost everywhere. So there are canonical local formulas almost everywhere.'}
If $K=\Q$, this issue disappears.
\end{remark}

There is a natural notion of integration on the set of $p$-adic points of a variety with respect to a volume form, see \cite{weil:adeles}. A key feature of this theory is that if ${\mathbf X}$ is a \emph{smooth} scheme over $\ri_F$, and $\omega$ is a top degree non-vanishing differential form on ${\mathbf X}$, defined over $\ri_F$, then  the volume of ${\mathbf X}(\ri_F)$ with respect to the measure 
$|\omega|$ is given by the number of points on the closed fibre of ${\mathbf X}$: 
\begin{equation}\label{eq:weil}
\vol_{|\omega|}({\mathbf X}(\ri_F))=\frac{\#{\mathbf X}(k_F)}{q^{\dim({\mathbf X})}}.
\end{equation}
The relationship between volumes and point-counts for more general sets (e.g. not requiring smoothness) was further explored by Serre \cite[Chapter III]{serre:chebotarev}, Oesterl\'e \cite{Oesterle}, and in the greatest generality,\footnote{a far-reaching generalization of these ideas is the theory of motivic integration started by Batyrev \cite{batyrev:calabi-yau}, Kontsevich, Denef-Loeser \cite{DL.arithm}, and Cluckers-Loeser \cite{cluckers-loeser}.} W. Veys \cite{Veys:measure}. 

Here we will only need to consider the case of reductive algebraic groups. We start with algebraic tori, where the volumes already carry interesting arithmetic information.

\subsection{Tori}\label{sub:tori} 
Let $\bT$ be an algebraic torus defined over $F$.  Let $F^\sep$ be the separable closure of $F$. 
As discussed above, to define a measure on 
$T:=\bT(F)$ one can start with a differential form or with a compact subgroup. To define either, we first need a choice of coordinates on $\bT$.  One natural choice, albeit not defined over $F$ unless $\bT$ is $F$-split, comes from any basis of the character lattice of $\bT$.     
Let $\chi_1,\dots ,\chi_r$, where $r$ is the rank of $\bT$, be any set of generators of $X^\ast(\bT)$ over $\Z$ (the characters a priori are defined over $F^\sep$).  We note that this choice is equivalent to a choice of an isomorphism $\bT\simeq \G_m^r$ over $F^\sep$. 
Then we can define a volume form (defined over $F^\sep$ but \emph{not over $F$}, unless $\bT$ is split over $F$): 
\begin{equation}\label{eq:omegaT}
\omega_T=\frac{d\chi_1}{\chi_1} \wedge\ldots \wedge \frac{d{\chi_r}}{\chi_r}.
\end{equation}

The group of $F$-points $\bT(F)$ has a unique maximal compact subgroup in the $p$-adic topology; 
we denote it by $T^c$, following the notation of \cite{shyr77}. Ono gave the description of this subgroup in terms of 
characters:  
$$T^c 
=\{t\in \bT(F): |\chi(t)|=1 \text{ for } \chi\in X^\ast(\bT)_F\},$$
where $X^\ast(\bT)_F$ is the sublattice of $X^\ast(\bT)$ consisting of the characters defined over $F$. 


\noindent {\bf Question 1:}\label{q1}
What is the volume of $T^c$ with respect to the measure $|\omega_T|$? 

The question is well-defined because any other $\Z$-basis of $X^\ast(\bT)$ would differ from $\{\chi_i\}$ by a $\Z$-matrix of determinant $\pm 1$, and hence the resulting volume form would give rise to the same measure. 

The complete answer to this question is quite involved and requires machinery beyond the scope of this note. 
Here we show some basic examples illustrating the easy part and the difficulty, and provide further references in \S\ref{subsub:general}. 

\begin{example}\label{ex:split} $\bT$ is $F$-split. 
For $\G_m$, we have: the invariant form as above is $dx/x$, where $x=\chi_1:\G_m\to \G_m$ is the identity character and the natural coordinate, and 
$\G_m^c =\ri_F^\times$. 
The volume calculation gives: 
$$\begin{aligned} 
&\vol_{\left|\frac{dx}x\right|}(\ri_F^\times)= \int_{\ri_F^\times }\frac{1}{|x|} |dx|  = \int_{\ri_F^\times} |dx| = \vol_{|dx|}(\ri_F) - 
\vol_{|dx|}(\varpi \ri_F)\\
&= 1-\frac1q =\frac{\#k_F^\times}{q},
\end{aligned}$$
as predicted by (\ref{eq:weil}).
Then for an $F$-split torus of rank $r$, 
$$\vol_{|\omega_T|}(T^c)=\left(1-\frac1q\right)^r.$$
\end{example}

The next easiest case is Weil restriction of scalars.
\begin{example}\label{ex:quadr} Let $E/F$ be a quadratic extension, and $\bT:=\res_{E/F}\G_m$. 
Assume that the residue characteristic $p\neq 2$. 

We write $E=F(\sqrt{\epsilon})$, where $\epsilon$ is any non-square in $F$, and we can choose it to be in $\ri_F$ without  loss of generality.
By definition, $\bT$ has two characters defined over $E$, call them $z_1$ and $z_2$ and think of them as $E$-coordinates on $\bT(E)$.   
Then the volume form  $\omega_T$ is simply 
$\omega_T=\frac{dz_1}{z_1}\wedge\frac{dz_2}{z_2}$. Note that it is defined over $E$ but not over $F$. 
We can try to rewrite it in $F$-coordinates: 
we write $z_1= x+\sqrt{\epsilon} y, z_2= x-\sqrt{\epsilon} y$, and get: 
\begin{equation}\label{eq:pullback_res}
\begin{aligned}
&
\omega_T=\frac{d(x+\sqrt{\epsilon}y)}{x+\sqrt{\epsilon}y}\wedge\frac{d(x-\sqrt{\epsilon}y)}{x-\sqrt{\epsilon}y}\\
&=\frac{(dx+\sqrt{\epsilon}dy)\wedge(dx -\sqrt{\epsilon}dy)}{x^2-\epsilon y^2} = \frac{-2 \sqrt{\epsilon}}{N_{E/F}(x+ \sqrt{\epsilon} y)} dx\wedge dy,
\end{aligned}
\end{equation}
where $N_{E/F}$ is the norm map. 

We note that $\bT(F)= E^\times$ as a set, and the norm map is the generator of the group of $F$-characters 
of $\bT$.  
Thus the subgroup $T^c$  of $\bT(F)$ is 
$$T^c=\{x+\sqrt{\epsilon} y \in E^\times : |x^2-{\epsilon}y^2|_F=1 \}.$$
Its volume with respect to the measure $|\omega_T|$ is: 
\begin{equation}\label{eq:ex1}
\vol_{|\omega_T|} (T^c) = |2\sqrt{\epsilon}| \int_{\{(x,y)\in F^2: |x^2-\epsilon y^2|=1\}} dx dy. 
\end{equation} 

Thus we have reduced the computation of the volume of $T^c$ with respect to the volume form $\omega_T$ to the computation of the volume of the subset of $\A^2(F)$, which we denote by $C$, defined by 
$$C:=\{(x,y)\in F^2: |x^2-\epsilon y^2|=1\}$$
with respect to the usual measure on the plane $|dx\wedge dy|$ (note that $C$ is open in $\A^2(F)$ in the $p$-adic topology).  
The computation of this volume  illustrates the  way to use  point-counting over the residue field, and for this reason we  do it in detail. 
There are two cases: 
$\epsilon$  is a unit (i.e., $E/F$ is unramified), and  $\epsilon$ is not a unit.

First, consider the unramified case. Note that $\ord(x^2-\epsilon y^2)=\min(\ord(x^2), \ord(y^2))$ since $\epsilon$ is a non-square unit. 
Then our set can be decomposed as: 
$$C=\{(x, y): x\in \ri_F^\times, y\in \ri_F \}\sqcup \{(x,y): x\in \ri_F\setminus \ri_F^\times, y\in \ri_F^\times\},$$ and its volume with respect to the affine plane measure $|dx\wedge dy|$ is, therefore: 
$$\vol_{|dx\wedge dy|}(C)= \frac{(q-1)q}{q^2}+ \frac{1}q\frac{q-1}q = \frac{q-1}q\left(1+\frac 1q\right).$$

There is an alternative (and more insightful) way to do this calculation: 
first, as above, observe that necessarily the set $C$ is contained in $\ri_F^2$. Then 
consider the reduction mod $\varpi$ map $(x, y)\mapsto (\bar x, \bar y)$ from $\ri_F^2$ to $k_F^2$, where $\varpi$ is the uniformizer of the valuation of $F$. Each fibre of this map is a translate of the set 
$(\varpi)\times(\varpi)\subset \ri_F^2$, thus each fibre has volume $q^{-2}$ with respect to the measure that we have denoted by $|dx\wedge dy|$. Therefore we just need to compute the number of these fibres to complete the calculation. There are two ways to do it: one is to proceed by hand, which in this case is easy enough. 
Another is to appeal to a generalization of Hensel's Lemma:  the affine $\ri_F$-scheme defined by $x^2-\epsilon y^2\neq 0$ is smooth; this implies that
the reduction map from the set of its $\ri_F$-points is surjective onto the set of $k_F$-points of its special fibre (see, e.g.,  \cite[\S 3]{serre:chebotarev} and 
\cite[III.4.5, Corollaire 3, p.271]{Bourbaki:commalg}). The set $C$ can be written as: 
$$C=\{(x,y)\in \ri_F^2: \overline{(x^2-{\epsilon} y^2)}\neq 0\}.$$
Since the reduction map is surjective in this case, we just need to find the number of points 
$(\bar x, \bar y)\in k_F^2$ such that $\bar x^2 -\bar \epsilon \bar y^2 \neq 0$. The set of points satisfying this condition is in bijection with $\F_{q^2}^\times$, where $\F_{q^2}$ is the quadratic extension of our residue field $k=\F_q$,  and we get the same result as above for the volume of $C$. 

If the extension is ramified, the calculation changes. 
In this case  $\ord(\epsilon)=1$, hence for $x^2-\epsilon y^2$ to be a unit, $x$ has to be a unit and there is no condition on $y$ other than it has to be an integer. Again consider the reduction modulo the uniformizer map. Its image in this case is $\F_q^\times \times \F_q$ (again, this can be checked by hand in this specific case), and thus the volume of the set $C$ is 
$\frac{(q-1)q}{q^2}$.

We summarize (cf. \cite{Langlands:2013aa}): 
\begin{equation}\label{eq:res_quadr}
\vol_{|\omega_T|}(T^c)=\begin{cases}&(\frac{q-1}q)^2 \quad {\bT}\text { split }\\
&\frac{q-1}q \frac {q+1}q \quad {\bT}\text { non-split unramifed  }\\
& \frac1{\sqrt{q}}\frac{q-1}q \quad {\bT}\text { non-split ramified. }\\
\end{cases}
\end{equation} 
Note the factor $|2|$ from (\ref{eq:ex1}) disappears (i.e., $|2|_v = 1$)  since we are assuming that $p\neq 2$. The factor 
$\frac1{\sqrt{q}}$ in the ramified case 
comes from the factor $|\sqrt{\epsilon}|$ in (\ref{eq:ex1}) (in this case,  $\epsilon=\varpi$ up to a unit since $p\neq 2$). 

For $p=2$, everything in the calculation is slightly different (and a lot longer) but the answers are similar. 
Not to interrupt the flow of the exposition, we postpone the discussion of $p=2$  till \S\ref{sub:two} below.  

\subsubsection{Norm-1 torus of a quadratic extension}\label{susbsub:norm1} 
Finally, to illustrate general difficulties of this volume computation, we consider the example of the norm-1 torus of a quadratic extension. 

There is an exact sequence of algebraic $F$-tori: 
\begin{equation}\label{eq:exact_seq_n1}
1\to \res_{E/F}^{(1)}\G_m \to \res_{E/F} \G_m \to \G_m \to 1,
\end{equation}
 where the last map is the norm map; its kernel is an algebraic torus $\bT_1:=\res_{E/F}^{(1)}\G_m $ over $F$, called the \emph{norm-1} torus. 
 
It is tempting to try to use this exact sequence to compute the volume of $T_1^c$ with respect to $\omega_{T_1}$, but that is not the right way to proceed. The standard way to do this calculation is to consider an isogeny between $\bT_1\times \G_m$ and $\bT$ and use the results of Ono on the behaviour of various invariants attached to tori under isogenies; see also \cite{shyr77}. However, this would take us too far afield; instead we proceed with an elementary calculation. 
Before we do this calculation for a $p$-adic field, consider for a moment the  situation when $F=\R$ and $E=\C$, in order to get some geometric intuition. 

\begin{example}\label{ex:heuristic} 
Let  $\bT := \res_{\C/\R}\G_m(\R)=\C^\times$; then the norm-1 torus is the unit circle $S^1$. 

The same calculation as in Example \ref{ex:quadr}  shows that the volume form $\omega_T$ gives the measure 
$|\omega_T|=|\frac{2i dx\wedge dy}{x^2+y^2}| =\frac{2}{x^2+y^2}|dx\wedge dy|$ on $\C^\times$. 

Similarly, if we go by the definition of the volume form $\omega_{S^1}$ on $S^1$, we obtain the following.
The generator of the character group $X^\ast(S^1)$ (over $\C$) is simply the identity character $z\mapsto z=x+iy$. 
We get that $\omega_{S^1}=\frac{dz}{z}$ by definition, but intuitively it is not  yet clear what is the measure defined by this form.
We write $dz=dx+idy$, and note that $\frac1z =\bar z=x-iy$ when $z\in S^1$. 
We obtain: 
\begin{equation}\label{eq:circle}
\omega_{S^1}=(x-iy)(dx+idy)= (x-iy)dx+(y+ix)dy.
\end{equation}
It is still not obvious what measure this form gives; it would be convenient to rewrite it 
using the local coordinate of some chart on the circle. 
Here we can use the fact that we are working over $\R$ and take $\theta$ to be the arc length; then $x=\cos \theta$, $y=\sin \theta$, 
$0\le \theta <2\pi$ 
is the familiar (transcendental) parametrization. 
We get:
\begin{equation}\label{eq:S1}
\begin{aligned}
&\omega_{S^1} =(x-iy)dx+(y+ix)dy\\
&=(\cos \theta -i\sin \theta) d(\cos \theta)+(\sin \theta +i\cos\theta) d(\sin \theta) \\
&= -\cos \theta \sin \theta d\theta + \sin \theta \cos \theta d\theta +i(\cos^2 \theta d\theta +\sin^2 \theta d\theta) =id\theta.  
\end{aligned}
\end{equation}
Since $|i|=1$, we see that the measure $|\omega_{S^1}|$ coincides with the arc length.

The exact sequence (\ref{eq:exact_seq_n1}) gives a relation between this measure and the measures on $S^1$ and $\G_m$, which is the same as rewriting the measure $dx\wedge dy$ in polar coordinates. 
Indeed, we have (from calculus)  $dx\wedge dy = r dr \wedge d\theta$, so 
$\frac{dx\wedge dy}{x^2+y^2} =\frac{dr}r \wedge d\theta$. 
We obtain the relation between $\omega_{S^1}$ and the form $\omega_T$ on $\bT=\res_{\C/\R}\G_m$: 
\begin{equation}
\omega_T= 2 \omega_{S^1}\wedge \omega_{\G_m}.
\end{equation}
\end{example}
The appearance of the factor  $2$ in this relation, combined with the fact the norm map to $\G_m$ is not surjective on $\R$-points 
and is $2:1$ illustrates that the relation between the measures  on $\bT$ and $\bT_1$ is not straightforward (if one cares for a power of $2$). Armed with this caution, we move on to the $p$-adic fields.

\begin{example}\label{ex:norm1}
Let  $\bT_1 := \res_{E/F}^{(1)}\G_m$ be the norm-1 torus of a quadratic extension as above, but with $E=F(\sqrt{\epsilon})$ an extension of non-Archimedean local fields as in Example \ref{ex:quadr}. As before, we assume $p\neq 2$ (the case $p=2$ is treated below in \S\ref{sub:two}).  

As above, we would like to understand the form $\omega_{\bT_1}$. We observe that the relation (\ref{eq:circle}) can be easily adapted to this case (essentially, replacing $i$ with $\sqrt{\epsilon}$). What we need is an algebraic parametrization of the conic $x^2-\epsilon y^2=1$. 
Such a parametrization is given in projective coordinates $(x:y:z)$ by: 
\begin{equation}\label{eq:param_conic}
x=\epsilon t^2 + 1 ,\quad y=2 t, \quad z=1-\epsilon t^2, \quad t\in F.
\end{equation}
Then a calculation similar to (\ref{eq:S1}) shows that in the affine chart $z\neq 0$, 
$$
\omega_{\bT_1}=-\frac{2\sqrt{\epsilon}}{1-{\epsilon}t^2} dt.
$$
We note here that we could have used the same rational parametrization for the unit circle in the example above; then at this point we would have obtained the same answer: if we plug in $\epsilon =-1$,  we 
get that
the ``volume'' of the circle with respect to $|\omega_{S^1}|$ is
$2\int_{\R}\frac1{t^2+1}\, dt = 2\pi$, as expected. 

Continuing with the $p$-adic calculation, we can discard $|2|$ since $p\neq 2$.
Next, we observe that $T_1^c=\bT_1(F)$ (our torus is \emph{anisotropic}; it has no non-trivial $F$-characters, and hence the condition defining $T_1^c$ is vacuous).  
Therefore, the volume of $T_1^c$ with respect to $\omega_{\bT_1}$ is 
\begin{equation}\label{eq:n1t}
\vol_{|\omega_{\bT_1}|}(T_1^c) = \int_{F}\left|\frac {\sqrt{\epsilon}}{{1-\epsilon}t^2}\right|\,  |dt|. 
\end{equation}

Now we need to consider two cases. \\
{\bf Case 1.} The extension is unramified, i.e., $\epsilon$ is a non-square unit. 
Then 
(\ref{eq:n1t}) becomes (using the fact that 
the volume of the $p$-adic annulus  $\{t: \ord(t)=n\}$ with respect to the measure $|dt|$ equals $q^{-n}-q^{-(n+1)}$) : 
\begin{equation}\label{eq:vol_unram} 
\begin{aligned}
&\vol_{|\omega_{\bT_1}|}(T_1^c) =& \int_{F}\frac1{|1-\epsilon t^2|}\, |dt| = 
\int_{\ri_F}  |dt| + \sum_{n=1}^\infty \frac1{q^{2n}}(q^{n}-q^{(n-1)}) \\
&{} & =1+\left(1-\frac1q\right)\sum_{n=1}^{\infty}\frac1{q^n}= 1+\frac 1q.
 \end{aligned}
 \end{equation}

{\bf Case 2.} The extension is ramified, i.e., $\ord(\epsilon)=1$.
Then $|1-\epsilon t^2|= 1$ if $t\in \ri_F$ and  $|1-\epsilon t^2|= q^{2n-1}$ if $\ord(t)=n<0$.  
Thus, the integral computing the volume of $T_1^c$ again breaks down as a sum: 
\begin{equation}\label{eq:one-more-two}
\begin{aligned}
&\vol_{|\omega_{\bT_1}|}(T_1^c) = & \frac1{\sqrt{q}}\int_{F}\frac1{|1-\epsilon t^2|}\, |dt| = 
\frac1{\sqrt{q}} \left(\int_{\ri_F}  |dt| + \sum_{n=1}^\infty \frac1{q^{2n-1}}(q^{n}-q^{(n-1)})\right) \\
&{} & =\frac1{\sqrt{q}}\left(1+q\left(1-\frac1q\right)\sum_{n=1}^{\infty}\frac1{q^n}\right)= 
\frac2{\sqrt{q}}.
 \end{aligned}
 \end{equation}
Let us compare the results of this calculation with the approach to volumes via point-counting. 
If we take the equation $x^2-\epsilon y^2=1$ and reduce it modulo the uniformizer, we get an equation of a conic over $\F_q$.
In the unramified case, this conic is in bijection with ${\mathbb P}^1(\F_q)$ via (\ref{eq:param_conic}); thus we expect the volume to equal 
$\frac{q+1}q$, which agrees with (\ref{eq:vol_unram}).  

In the ramified case, when $\epsilon$ is not a unit, the reduction of the same equation modulo the uniformizer gives 
a disjoint union of two lines over the finite field: it is the subvariety of the affine plane defined by $x^2=1$.
The point-count over $\F_q$ gives us $2q$, thus the volume we obtain is 
$\frac{2q}{q}=2$, which agrees with (\ref{eq:one-more-two}) once we make the correction for the fact that our volume form had a factor of $\sqrt{\epsilon}$ and thus was not defined over $F$ (this again illustrates why in \cite{weil:adeles} the disciminant factor appears in the definition of the volume form).  
We note that when $p\neq 2$, the affine scheme $\spec F[x,y]/(x^2-\epsilon y^2-1)$ is smooth over $\ri_F$ (in both the ramified and unramified cases, which can be checked by the Jacobi criterion, \cite[\S 2.2]{bosch-lutkebohmert-raynaud:NeronModels}), and this justifies the fact that  the point-count on the reduction $\mod \varpi$ does give us the correct answer. 

\end{example}

\subsubsection{The N\'eron model}\label{subsub:Neron}
How do the above calculations generalize to an arbitrary algebraic torus? 
The issue is that for a torus that is not $F$-split, it is not a priori obvious how to choose `coordinates' defined over $\ri_F$; more precisely, one first needs to define an \emph{integral model} for $\bT$, i.e., a scheme over $\ri_F$ such that its generic fibre is $\bT$. In order to use the formula 
(\ref{eq:weil}), this model would also need to be a \emph{smooth} scheme over $\ri_F$. 
There is a canonical way to define such a smooth integral model for $\bT$, namely, the \emph{weak N\'eron model}, \cite[Chapter 4]{bosch-lutkebohmert-raynaud:NeronModels}, which we shall denote by $\mathcal T$.
In general it is a scheme not of finite type (it can have infinitely many connected components). 
The $\ri_F$-points of its identity component $T^0:={\mathcal T}^0(\ri_F) $ provide another canonical compact subgroup of $\bT(F)$. This subgroup is traditionally used in the literature to normalize the Haar measures on tori (and plays a role in normalization of measures on general reductive groups, as we shall see below), but it plays no explicit role in this note, hence we do not discuss any details of its definition. 

Moreover, once we have the $\ri_F$-model for $\bT$, we can use the local coordinates associated with this model to define a volume form on $\bT(F)$. Unlike the volume form defined above by using the characters, this form actually has coefficients in $F$; it is called the \emph{canonical} volume form in the literature, following the article \cite{gross:motive}; we call it $\omega^{\can}$, but we shall not use any explicit information about it in this note.

In general, $T^0$ is a subgroup of finite index in $T^c$ (this index is an interesting arithmetic invariant, see \cite{bitan} for a detailed study), 
and the relationship between the form $\omega_T$ defined above and 
 $\omega^{\can}$ is discussed in \cite{gan-gross:haar}.
 The subgroup $T^c$ is the set of $\ri_F$-points of the so-called \emph{standard} integral model of $\bT$, which is not smooth in general; roughly speaking, the coordinates on the standard model come from the characters $\chi_i$ as above  in the examples (see \cite[\S 1.1]{bitan}). 

If $\bT$ splits over an unramified extension of $F$, the situation is simple (see \cite{bitan} and references therein for details): 
\begin{theorem} Suppose that $\bT$ splits over an unramified extension of $F$. Then 
\begin{enumerate}
\item $T^0= T^c$, and the special fibre ${\mathcal T}^0_{\kappa}$ of ${\mathcal T}^0$ is an algebraic torus over $\F_q$. 
\item $\vol_{|\omega_T|}(T^c)= \vol_{|\omega^{\can}|}(T^c)= \frac{\# {\mathcal T}^0_{\kappa}(\F_q)}{q^{\dim(\bT)}}$.  
\end{enumerate}
\end{theorem}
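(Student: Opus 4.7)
My plan is to reduce both parts to Weil's formula \eqref{eq:weil}, using the unramified hypothesis to exhibit a particularly nice smooth integral model for $\bT$.

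For part (1), I would construct the identity component of the Néron model by étale descent. Pick a finite unramified Galois extension $L/F$ splitting $\bT$, so $\Gamma := \Gal(L/F)$, canonically identified with $\Gal(k_L/k_F)$, acts on the character lattice $X^\ast(\bT)$. The split $\oi_L$-torus $\G_m^r\times_{\spec \Z}\spec \oi_L$ carries a compatible semilinear $\Gamma$-action coming from this action on $X^\ast(\bT)$; because $\oi_L/\oi_F$ is étale, étale descent yields a smooth affine $\oi_F$-group scheme $\mathcal T^0$ whose generic fibre is $\bT$ and whose special fibre is the $\F_q$-torus attached to the $\Gal(\bar k_F/k_F)$-module $X^\ast(\bT)$. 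By the characterization in \cite[Ch.~10]{bosch-lutkebohmert-raynaud:NeronModels}, $\mathcal T^0$ is the connected component of the lft-Néron model of $\bT$.

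Next I would identify $T^0 := \mathcal T^0(\oi_F)$ with Ono's subgroup $T^c$. On $L$-points, $\mathcal T^0(\oi_L) = (\oi_L^\times)^r$, which coincides with the set $\{t \in \bT(L) : |\chi(t)|_L = 1 \text{ for all } \chi \in X^\ast(\bT)\}$. Taking $\Gamma$-fixed points, $T^0$ is the subset of $\bT(F)$ on which every character $\chi \in X^\ast(\bT)$ takes values in $\oi_L^\times$. For $\chi \in X^\ast(\bT)_F$ this is exactly Ono's condition $|\chi(t)|_F = 1$. For a character $\chi$ not defined over $F$, the $F$-rational character $\prod_{\sigma \in \Gamma} \sigma\chi$ being an $\oi_F^\times$-unit is equivalent, in the unramified case where $|\cdot|_L$ restricts to $|\cdot|_F^{[L:F]}$ on $F^\times$, to every $\sigma\chi(t)$ being an $\oi_L^\times$-unit. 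This gives $T^0 = T^c$.

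For part (2), Weil's formula \eqref{eq:weil} applied to the smooth $\oi_F$-scheme $\mathcal T^0$ with $\omega^{\can}$ (a generator of $\bigwedge^{\dim\bT}\Omega^1_{\mathcal T^0/\oi_F}$, which is free of rank one by smoothness) yields
$$\vol_{|\omega^{\can}|}(T^0) \;=\; \frac{\#\mathcal T^0_\kappa(\F_q)}{q^{\dim(\bT)}}.$$
To conclude, I compare $\omega_T$ and $\omega^{\can}$. Both are $\bT$-invariant top forms, so over $L$ they differ by a nonzero scalar $c \in L^\times$. Pulling back to $\mathcal T^0_{\oi_L} \simeq \G_m^r \times_{\spec \Z}\spec \oi_L$ and using the chosen characters $\chi_1,\ldots,\chi_r$ as coordinates, the form $\omega_T$ becomes the standard $\bigwedge_i dy_i/y_i$, while $\omega^{\can}$ becomes another generator of the same rank-one $\oi_L$-module of invariant top forms; hence $c \in \oi_L^\times$ and $|c|_L = 1$. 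Combined with the equality $T^0 = T^c$ from the previous step, this yields the three equal quantities in the theorem. The main obstacle is the descent step in part (1)---specifically, checking that the descended model is genuinely the Néron component and not some other smooth $\oi_F$-form; once this is in hand, the volume comparison in part (2) is formal.
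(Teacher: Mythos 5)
The paper does not actually prove this theorem; it merely states it and points to \cite{bitan} ``and references therein,'' and the two ingredients you need are recorded later in \S\ref{subsub:general}: Proposition 2.14 of \cite{bitan} for the volume formula $\vol_{|\omega^{\can}|}(T^0)=\#\mathcal T^0_\kappa(\F_q)/q^{\dim\bT}$, and Corollary 7.3 of \cite{gan-gross:haar} for the comparison $|\omega_T|=|\sqrt{D_M}|\,|\omega^\can|$, which becomes trivial when the splitting extension is unramified. Your proposal supplies a self-contained derivation in the unramified case, and the overall strategy — construct the smooth $\oi_F$-model by \'etale descent from the split torus over $\oi_L$, identify its $\oi_F$-points with Ono's subgroup $T^c$, apply Weil's formula \eqref{eq:weil} with $\omega^{\can}$, and then show $\omega_T/\omega^{\can}\in\oi_L^\times$ by comparing generators of the rank-one module of invariant top forms after base change — is sound and is exactly what underlies the cited results.

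One small misattribution worth noting: in your argument that $T^c\subseteq T^0$, the equivalence between ``$\prod_{\sigma}\sigma\chi(t)\in\oi_F^\times$'' and ``each $\sigma\chi(t)\in\oi_L^\times$'' does not actually use the unramified hypothesis. Since $t$ is $F$-rational, the elements $\sigma\chi(t)=\sigma(\chi(t))$ are a full set of Galois conjugates, so they all have the same $L$-valuation, and then $|\Gamma|\,v_L(\chi(t))=v_L\bigl(\prod_\sigma\sigma\chi(t)\bigr)=0$ forces $v_L(\chi(t))=0$; this works for any finite Galois extension. The genuine role of the unramified hypothesis is only where you already flag it: in making $\oi_L/\oi_F$ \'etale so that descent produces a smooth connected $\oi_F$-model, and in verifying that this descended scheme is the N\'eron identity component $\mathcal T^0$ (in the ramified case the same character conditions cut out the standard model, whose $\oi_F$-points $T^c$ strictly contain $T^0$). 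So the structure of your proof is right; just delete the spurious ``unramified'' qualifier from the valuation computation and lean entirely on the descent step.
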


We give one illustrative example without any details, and summarize the known general results below in \S \ref{subsub:general}. 
\begin{example}  Consider again  Example \ref{ex:norm1}, where $\bT$ is the norm-1 torus of a quadratic 
extension. It is anisotropic over $F$, and consequently its N\'eron model is a scheme of finite type over $F$. 
If $E/F$ is unramified, the `standard model' (see \cite{bitan}) coincides with the N\'eron model, and is simply defined by the equation 
$x^2-\epsilon y^2=1$ over $\ri_F$. 
It is connected and its special fibre is an irreducible conic over $\F_q$, which has $q+1$ 
rational points over $\F_q$, as discussed above. 

If $E/F$ is ramified, e.g. $E=F[\sqrt{\varpi}]$, then $|T^c/{T}^0|=2$ (the special fibre of $\mathcal T$ has two connected components, each isomorphic to an affine line as we saw above -- note that  it is not an algebraic torus!) 
And in this case we have 
$$\vol_{|\omega^{\can}|}({T}^0)= \frac{\# {\mathcal T}^0_{\kappa}(\F_q)}{q^{\dim(\bT)}}=\frac{q}q=1,$$
and also $\omega_T=|\sqrt{\varpi}|_E \omega^{\can} = \frac1{\sqrt{q}}\omega^{\can}$. 
\end{example} 

\end{example} 
\begin{example}\label{ex:restr} 
For an arbitrary finite extension $E/F$ and 
$\bT=\res_{E/F}\G_m$, we have  $\bT(F) = E^\times$, and $T^c =\ri_E^\times$. 
If $\alpha_1, \dots, \alpha_r$ are elements of $\ri_E$ that form a basis for $\ri_E$ over $\ri_F$, 
then $\chi_i(x):=\tr_{E/F}(\alpha_i x)$ form a basis of $X^\ast(\bT)$ over $\Z$. 
Then by definition of the discriminant (as the norm of the different ${\mathfrak d}_{E/F}$), 
the measure $|\omega_T|=|\wedge d\chi_i|$ equals
$|\det(\tr_{E/F}(\alpha_i))|_F \omega^{\can}$,  i.e. the conversion factor is
the square root of the $F$-absolute value of the discriminant of $E$:
$$\omega_T=\sqrt{|\Delta_{E/F}|}\omega^\can.$$
This calculation is generalized to an arbitrary reductive group (not just an arbitrary torus) in \cite{gan-gross:haar}. 
\end{example}

\subsubsection{References to the general results: a non-self-contained answer to Question 1} \label{subsub:general} 
\begin{enumerate} 
\item In general, the index $[T^c:T^0]=|(X_\ast)_I^{tor}|$ can be computed by looking at the inertia co-invariants on the co-character lattice of $\bT$, see \cite[(3.1)]{bitan} which follows  \cite[\S 7]{kottwitz:isocrystals-2}. 
\item The relation
$ \vol_{|\omega^{\can}|}(T^0) = \frac{\# {\mathcal T}^0_{\kappa}(\F_q)}{q^{\dim(\bT)}}$ holds for \emph{any} algebraic torus, 
\cite[Proposition 2.14]{bitan}.
\item As noted above, the form $\omega_T$ is not generally defined over $F$. It turns out (see \cite{gan-gross:haar}) that it only needs to be corrected by a factor that is a square root of an element $F$ to get to a form defined over $F$. We saw this already in the case when $\bT$ is of the form 
$\res_{E/F}\G_m$; the general case follows from this calculation and a theorem of Ono that relates an arbitrary torus with a torus of the form $\res_{E/F}\G_m$, see proof of Corollary 7.3 in \cite{gan-gross:haar}. 
Specifically, Corollary 7.3 in \cite{gan-gross:haar} states (using our notation) that if $F$ has characteristic 
zero or if  $\bT$ splits over a Galois extension of $F$ of degree relatively prime to the the characteristic of $F$, then: 
\begin{equation}
\left |\frac{\omega_T}{\sqrt{D_M}}\right|  = | \omega^\can|, 
\end{equation}
where $D_M$ is a \emph{refined Artin conductor} of the motive $M$ associated with $\bT$, defined in \cite[(4.5)]{gan-gross:haar}. We discuss this motive briefly in the next subsection, but do not discuss the definition of the Artin conductor. 

\item Combining these results gives an answer to Question 1 above.  
 
\end{enumerate}

\subsubsection{The local $L$-functions} There is yet another way to express the number of $\F_q$-points of 
${\mathcal T}^0_\kappa$, and hence the volume of $T^0$, entirely in terms of the representation of the Galois group on the character lattice, using Artin L-factor.
 
Indeed, an algebraic torus $\bT$ over $F$ is uniquely determined by the action of the Galois group of its splitting field $E$ on $X^\ast(\bT)$. Let $\Gamma=\Gal(E/F)$ and let $I$ be the inertia subgroup and let $\fr$ be the Frobenius automorphism of $k_E$ over $k_F$. Recall that we have the exact sequence of groups 
$$1\to I \to \Gamma \to \langle \fr \rangle \to 1,$$
and the cyclic group $\langle \fr \rangle$ is isomorphic to the Galois group of the residue field $\Gal(k_E/k_F)$. 
Thus we get a natural action of $\Gal(k_E/k_F)$ on the set of inertia invariants $X^\ast(\bT)^I$. 
Let us denote this integral representation by 
$$\sigma_T: \Gal(k_E/k_F) \to {\operatorname{Aut}}_\Z(X^\ast(\bT)^I)\simeq \GL_{d_I}(\Z),$$ 
where $d_I=\rk(X^\ast(\bT)^I)$. 
The Artin $L$-factor associated with this representation is, by definition,
\begin{equation}\label{eq:L}
L(s, \sigma_T)=\det\left(I_{d_I}- \frac{\sigma_T(\fr)}{q^s}\right)^{-1} \quad \text { for } s\in \C,
\end{equation}
where $I_{d_I}$ is the identity matrix of size $d_I$.
Then the following relation holds (we are quoting it from \cite[Propostion 2.14]{bitan}): 
\begin{theorem}\label{thm:torus}
$\omega^{\can}(T^0)=\frac{\# {\mathcal T}^0_{\kappa}(\F_q)}{q^{\dim(\bT)}} = L(1, \sigma_T)^{-1}$.
\end{theorem}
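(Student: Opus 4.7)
The first equality is a direct application of Weil's formula (\ref{eq:weil}) to the smooth $\ri_F$-scheme $\mathcal T^0$, which is precisely what is recorded as item (2) in \S\ref{subsub:general}. So my plan focuses on the second equality, which amounts to computing $\#\mathcal T^0_\kappa(\F_q)$ in terms of the representation $\sigma_T$ of Frobenius on $X^\ast(\bT)^I$.

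The key structural input I would invoke is the description of the special fibre of the connected Néron model of a torus (see \cite{bosch-lutkebohmert-raynaud:NeronModels}, Chap.~10). Namely, $\mathcal T^0_\kappa$ is a connected smooth $\F_q$-group of dimension $\dim(\bT)$, and sits in a canonical exact sequence
\begin{equation*}
1 \to \bV \to \mathcal T^0_\kappa \to \bS \to 1,
\end{equation*}
where $\bV$ is a connected unipotent $\F_q$-group and $\bS$ is an $\F_q$-torus whose character lattice is $X^\ast(\bT)^I$, with Galois action given precisely by $\sigma_T$. In particular $\dim(\bS)=d_I$ and $\dim(\bV)=\dim(\bT)-d_I$. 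This is the main obstacle in the sense that it is a nontrivial structural result which I would simply cite rather than reprove.

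Granted this, the point count is straightforward. By Lang's theorem $H^1(\F_q,\bV)=0$, so $\#\mathcal T^0_\kappa(\F_q)=\#\bV(\F_q)\cdot\#\bS(\F_q)$. Any connected unipotent $\F_q$-group is filtered by copies of $\G_a$, giving $\#\bV(\F_q)=q^{\dim(\bT)-d_I}$. For the toric part, identifying $\bS(\overline{\F_q})=\mathrm{Hom}(X^\ast(\bT)^I,\overline{\F_q}^\times)$ and using that Frobenius-fixed points correspond to homomorphisms killed by $\sigma_T(\fr)-q\cdot I$ on the character lattice, one obtains
\begin{equation*}
\#\bS(\F_q)=\det\bigl(q\cdot I_{d_I}-\sigma_T(\fr)\bigm|X^\ast(\bT)^I\bigr),
\end{equation*}
which is positive because all eigenvalues of $\sigma_T(\fr)$ are roots of unity.

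Assembling these ingredients,
\begin{equation*}
\frac{\#\mathcal T^0_\kappa(\F_q)}{q^{\dim(\bT)}}
= \frac{q^{\dim(\bT)-d_I}\cdot\det\bigl(q I_{d_I}-\sigma_T(\fr)\bigr)}{q^{\dim(\bT)}}
= \det\!\left(I_{d_I}-\frac{\sigma_T(\fr)}{q}\right)
= L(1,\sigma_T)^{-1},
\end{equation*}
the last equality being the defining formula (\ref{eq:L}). As a sanity check: for $\bT$ split over an unramified extension, $\bV$ is trivial and the formula reduces to Example \ref{ex:split}; for $\bT_1$ the norm-1 torus of a ramified quadratic extension, $d_I=0$, $\bS$ is trivial, $\mathcal T^0_\kappa\cong\bV\cong\G_a$, giving $q/q=1=L(1,\mathbf{1})^{-1}$, matching the calculation at the end of Example \ref{ex:norm1}.
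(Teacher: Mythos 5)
The paper does not actually prove Theorem \ref{thm:torus}: it is explicitly quoted from \cite[Proposition 2.14]{bitan}, so there is no in-paper proof to compare against. Your proposal supplies a genuine proof sketch, and it is correct.

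The first equality is, as you say, Weil's formula (\ref{eq:weil}) applied to the smooth $\ri_F$-scheme $\mathcal T^0$ (this is the content of item (2) of \S\ref{subsub:general}, again drawn from \cite{bitan}). The second equality is where the content lies, and your route is sound: reduce to the structure of $\mathcal T^0_\kappa$ as an extension of an $\F_q$-torus $\bS$ (with $X^\ast(\bS)=X^\ast(\bT)^I$ and Frobenius acting by $\sigma_T$) by a connected unipotent group $\bV$; apply Lang's theorem to split the point-count; use $\#\bV(\F_q)=q^{\dim\bV}$ and the standard torus count $\#\bS(\F_q)=\det\!\bigl(q\,I_{d_I}-\sigma_T(\fr)\mid X^\ast(\bT)^I\bigr)$; divide by $q^{\dim\bT}$ and recognize the defining expression (\ref{eq:L}) for $L(1,\sigma_T)^{-1}$. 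Your two sanity checks (unramified split torus; norm-1 torus of a ramified quadratic extension) are consistent with the worked examples in \S\ref{sub:tori}. The one thing you rightly flag as unproved is the structural description of $\mathcal T^0_\kappa$; I would add that the cleanest reference for the identification of the toric part's character lattice with $X^\ast(\bT)^I$ is the literature specifically on N\'eron models of tori (Xarles, and \cite{bitan} itself) rather than \cite{bosch-lutkebohmert-raynaud:NeronModels} generically, and that your sign convention (``killed by $\sigma_T(\fr)-q\,I$'') should read $q\,I-\sigma_T(\fr)$, though since you take a determinant on an even-rank lattice this is immaterial. Also the notation $L(1,\mathbf 1)^{-1}$ in the last sanity check is slightly misleading: when $d_I=0$ the $L$-factor is the empty determinant, equal to $1$, whereas $L(1,\mathbf 1)$ for the one-dimensional trivial representation would be $(1-q^{-1})^{-1}$. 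These are cosmetic; the argument itself is correct and is precisely what a proof of Bitan's proposition looks like.
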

Note that if $E/F$ is unramified, the inertia is trivial, so $d_I=\rk(\bT)$.

Thus, to summarize, we have defined a natural invariant form $\omega_T$ on $\bT$ and described the maximal compact subgroup $T^c$  of $\bT(F)$ in terms of the characters of $\bT$.  
If $\bT$ splits over an unramified extension, the volume of $T^c$ with respect to this differential form equals 
\begin{equation}\label{eq:unram}
\vol_{|\omega_T|}(T^c)=
\frac{\#{\mathcal T}^0_\kappa (\F_q)}{q^{\dim(\bT)}} = L(1, \sigma_T)^{-1},
\end{equation}
where the second equality holds for any $\bT$, not necessarily unramified. 
In general, the volume of $T^c$ with respect to this differential form contains two more factors -- the index of 
$T^0$ in $T^c$ and the ratio between the differential forms $\omega_T$ and $\omega^{\can}$.

\subsection{Reductive groups} 
Similarly to the case of tori discussed above, for a general reductive group $\bG$ over a local field $F$, 
the choice of a normalization of Haar measure is linked with a choice of a `canonical' differential form or a `canonical' compact subgroup of $\bG(F)$ (unlike an algebraic torus, the set of $F$-points of a general reductive group can have more than one conjugacy class of maximal compact subgroups, and this choice matters for the normalization of measure).
Luckily, for the questions studied in \cite{langlands-frenkel-ngo} the choice of the normalization of measure on $\bG(F)$ does not matter -- it only contributes some global constant. 

However, for completeness, we record that a `canonical' choice of a compact subgroup $G^0$ and an associated volume form $\omega_G$ is 
described by B. Gross in \cite{gross:motive}, using Bruhat-Tits theory. 
The group $G^0$ is the set of $\ri_F$-points of a smooth scheme $\underline{\bG}$ over $\ri_F$ whose generic fibre is $\bG$. 
Hence, by Weil's general argument (since $\underline{\bG}$ is smooth over $\ri_F$),  the volume of $G^0$ is obtained by counting points on the special fibre of $\underline \bG$ (see  (\cite[Proposition 4.7]{gross:motive}): 
\begin{equation}
\vol_{|\omega_G|}(G^0)=\frac{\#\underline{\bG}_\kappa(\F_q)}{q^{\dim\bG}}. 
\end{equation}
Moreover, Gross defines an Artin-Tate motive $M$ associated with $\bG$ such that the volume of the canonical compact subroup with respect to the canonical form is given by the value of the Artin $L$-function associated with this motive at $1$. (If $\bG$ is an algebraic torus, the associated motive is precisely the representation of the Galois group on its character lattice as above, and Gross' result amounts precisely to the statement of Theorem \ref{thm:torus} above).


\section{Orbital integrals: the geometric measure}\label{sec:oi}
\subsection{The two normalizations}\label{subseq:the_problem}
Let $\bG$ be a connected reductive algebraic group over $F$.
We denote the sets of regular semisimple elements in $G:=\bG(F)$ by  $G^\rss$  
(respectively, $\fg^\rss$ for the Lie algebra $\fg$ of $\bG$).\footnotetext{The simplest way to characterize the set of regular semisimple elements is to use any faithful representation of 
$G$ to think of its elements as matrices; 
then an element 
$\gamma\in \bG(F)$  (respectively, $X\in \fg$) is regular and semisimple if and only if its eigenvalues (in an algebraic closure of $F$) are distinct; we will also give a precise definition below in \S \ref{sub:weyl_discr}.}

Let $\gamma\in G:=\bG(F)^\rss$ 
(in this note we are only interested in this setting). 
The adjoint orbit (or simply, `orbit', or sometimes, `rational orbit' when we want to emphasize that it is the group of $F$-points of 
$\bG$ that is acting on it) of $\gamma$ is the set 
$$\ri(\gamma) := \{g\gamma g^{-1} \mid g\in \bG(F)\}.$$
The centralizer of $\gamma$ is by definition the group $C_{G}(\gamma) = \{  g\in \bG(F)\mid g\gamma g^{-1} =\gamma \}$.
We will also briefly refer to the notion of a \emph{stable} orbit of $\gamma$. It is a finite union of rational orbits;  
as a first approximation, it can be thought of as the set 
$$\ri(\gamma)^\mathrm{stable} := \{g\gamma g^{-1} \mid g\in \bG(F^\sep)\}\cap \bG(F).$$
However, this is not the correct definition in general; see \cite{kottwitz82}. 
We will not need a precise definition in this note.  
If $\bG=\GL_n$, then for $\gamma\in G^\rss$, the stable orbit and rational orbit are the same.


When $\gamma \in G^\rss$, the identity component of the centralizer of $\gamma$ is a maximal torus $T\subset G$; it can be thought of as a set of $F$-points $T=\bT(F)$ of an algebraic torus $\bT$ defined over $F$.
This leads to 
two natural approaches to normalizing the measure on the orbit of $\gamma$: 
\begin{enumerate}
\item Normalize the measures on $G$ and $T$ according to one of the methods discussed above and consider the quotient measure.
\item Describe the space of all (stable) orbits, and derive a measure on each orbit as a quotient measure with respect to the measure on the space of orbits. \footnote{In fact, there is a third natural approach if we are working with the orbital integrals on the Lie algebra rather than the group: namely, to identify $\fg$ with $\fg^\ast$ and consider the differential form on the orbit itself which comes from Kirillov's symplectic form on co-adjoint orbits, see \cite{kottwitz:clay}. We will not discuss this approach here as it is not related to the main subject of the note. However, the example of $\fsl_2$ where one can clearly see the relation of this measure to what one would expect from calculus is provided in the Appendix by Matthew Koster.}
\end{enumerate} 
 
Since the $G$-invariant measure on each orbit is unique up to a constant multiple, the two orbital integrals defined with respect to these measures will of course differ by a constant; however, this constant can, and does, depend on the orbit. 
The goal of this section is to give a detailed explanation for the formula that relates the two orbital integrals; this is equation (3.31) in \cite{langlands-frenkel-ngo}. 
More specifically,  we start with a review of the construction of Steinberg map 
$\fc: \bG \to \A_G$, in \S \ref{sub:steinberg} below. 
The set $\A_G(F)$ has an open dense subset whose points parametrize the stable orbits of regular semisimple elements in $G$ -- each fibre of the map $\fc$ over a point of this subset is such a stable orbit. 
The relation (3.31) in \cite{langlands-frenkel-ngo} we aim to explain is:  
\begin{equation}\label{eq:FLN}
\int_{\fc^{-1}(a)} f(g) d|\omega_a| = |\Delta(t)| L(1, \sigma_{T\backslash G}) O^{\mathrm{stable}}(t, f),
\end{equation} 
where $a=\fc(t)$. 
We start by defining all the ingredients of this formula (as we shall see, this formula is not really about orbital integrals; it is simply a statement about the relationship between two invariant measures on an orbit). 
We also simultaneously treat the orbital integrals on Lie algebras. 

\subsection{The space $\A_G$; Chevalley and Steinberg maps}\label{sub:steinberg}
We start with the Lie algebra, where the situation is simpler.
We recall that $\bG$ acts on $\fg$ via adjoint action, denoted by $\Ad:\bG \to \GL(\fg)$ (for the classical groups and their Lie algebras, 
$\Ad(g)$ is simply matrix conjugation by an element $g\in \bG(F)$). 
When we talk about orbits in $\fg$, it is the orbits under the adjoint action. 
For $X\in \fg$, its \emph{centralizer} $C_G(X)$ is, by definition, its stabilizer (in $\bG(F)$) under the adjoint action. 
If $X\in \fg^\rss$, then $C_G(X)$ is a maximal torus in $\bG(F)$. 

\subsubsection{Reductive Lie algebra; algebraically closed field}\label{subsub:Liealg}\footnote{This section is entirely based on 
\cite[\S 14]{kottwitz:clay}.}
Let $\fg$ be the Lie algebra of $\bG$. For the moment let us work over an algebraically closed field $k$ of characteristic $0$ (in fact, assuming sufficiently large characteristic is sufficient here but we will not pursue this direction). 
Let $\ft=\mathrm{Lie}(\bT)$ be a maximal Cartan subalgebra. 
Then the ring of polynomial functions on $\ft$ is the symmetric algebra $S=S(\ft^\ast)$. The Weyl group acts on $S$, and the ring of invariants $S^W$ is the ring of regular functions on the quotient $\ft/W$. 
In other words, $\ft/W$ is a variety over $k$, isomorphic to $\spec S^W$. 
Furthermore, in fact $S^W$ is itself a polynomial ring, and so $\ft/W$ is isomorphic to the affine space $\A^r$, where $r=\rank(\bT)$. 
\begin{example} Let $\bG=\GL_n$, and let $\ft\subset \fg=\mathfrak{gl}_n$ be the Cartan subalgebra consisting  of diagonal matrices.  Then $W=S_n$, $S=k[x_1, \dots, x_n]$, and $S^W$ is the algebra of symmetric polynomials. 
As we know, it is generated by the elementary symmetric polynomials. Thus, the map $\ft\to\A^r$ is given by: 
$t=\diag(t_1, \dots, t_r)\mapsto (\sigma_1(\bar t), \dots, \sigma_r(\bar t))$, where $\bar t =(t_1, \dots, t_r)$ and 
$\sigma_k(\bar t)=\sum_{\{i_1, \dots, i_k\}\subset \{1, \dots, r\}} t_{i_1}\dots t_{i_k}$ is the $k$-th elementary symmetric polynomial. 
Note that in particular, for $n=2$, we get the map $t\mapsto (\tr(t), \det(t))$. 
\end{example}

Let $k[\fg]$ be the $F$-algebra of polynomial functions on $\fg$, and let $k[\fg]^G$ be the subalgebra of 
the polynomials invariant under the adjoint action of $G$. 
We quote from \cite[\S 14.2]{kottwitz:clay}: Chevalley's restriction theorem can be stated as: 
$$k[\fg]^G\cong S^W,$$
where the isomorphism is given by restricting the polynomial functions from $\fg$ to $\ft$. 
Dually to the inclusion $k[\fg]^G\hookrightarrow k[\fg]$, we get the surjection (which we will refer to as  \emph{Chevalley map})
$$\fc_{\fg}: \fg \to \A_G=\ft/W,$$
which maps $X\in \fg$ to the unique $W$-orbit in $\ft$ consisting of elements conjugate to the semisimple part of $X$.  
An important observation (which is not used in these notes but is very relevant for the subject) is that the nilpotent cone in $\fg$ is $\fc_{\fg}^{-1}(0)$. 

In general, the role of `elementary symmetric polynomials' is played by the traces of irreducible representations of $\fg$ determined by the \emph{fundamental weights}. Namely, let $\{\mu_i\}_{i=1}^r$ be the fundamental weights determined by a choice of simple roots for $\fg$
(i.e., the weights of $\fg$ defined by $\langle \mu_i, \alpha_j\rangle=\delta_{ij}$, where $\Delta=\{\alpha_j\}_{j=1}^r$ is a base of the root system of $\fg$). 
Let $\rho_i$ be the representation of $\fg$ of highest weight $\mu_i$. 
Then $S^W$, as an algebra,  is generated by $\Tr(\rho_i)$ (see,  e.g., \cite[23.1]{humphreys}). 

For the type $A_n$, one recovers the elementary symmetric polynomials from this construction. 
Namely, for $\fsl_n$, it happens that  the exterior powers of the standard representation are irreducible, and they give all the fundamental representations: 
$\rho_i=\wedge^i \rho_1$, for $i=1, \dots, {n-1}$, where $\rho_1$ is the standard representation, which has highest weight $\mu_1$. 
Consequently, since the coefficients of the characteristic polynomial of a matrix are (up to sign) the traces of its exterior powers, we obtain: 

\begin{example}
For $\fsl_n$, 
Chevalley map can be realized explicitly as 
$X \mapsto (a_i)$, where $a_i$ are the coefficients of the characteristic polynomial of $X$. 
\end{example}

\subsubsection{Reductive Lie algebra, non-algebraically closed field}
When the field $F$ is not algebraically closed, the space $\A_G$ can be defined as $\spec F[\fg]^G$, avoiding the need to choose a maximal torus;  it turns out that the morphism $\fc$ is still defined over $F$ (see \cite[\S 14.3]{kottwitz:clay}). 
However, in this note we are only considering the case of $\bG$ split over $F$, and it is convenient for us to continue using an explicit definition of the map.
Namely, if $\bG$ is split over $F$, we can choose the split maximal torus $T^\spl$ in $G$, and define $\A_G=\ft^\spl/W$ exactly as above. The definition of the map $\fc_{\fg}$ stays the same. 
Consider explicitly what happens in the $\bG=\GL_2$ example. 

\begin{example}\label{ex:gl2}
 As above, Chevalley map is the map $\fc_{\mathfrak{gl}_2}: \mathfrak{gl}_2 \to \A^2$,
$X\mapsto (\tr(X), \det(X))$. 
All the split Cartan subalgebras are conjugate in $\fg$.
The image under Chevalley map of any split Cartan subalgebra  $\ft$ of  $\fg$ is the set 
$$(a_1, a_2)\in \A^2: a_1^2-4a_2 \text{ is a square in } F.$$ 
We observe that the $F$-conjugacy classes of Cartan subalgebras in ${\mathfrak{gl}_2}$ are in bijection with quadratic extensions of $F$: as discussed above in Example \ref{ex:restr}, for each quadratic extension $E$ of $F$ we get the torus $R_{E/F}\G_m$ in $\GL_2$. Its Lie algebra maps under Chevalley map onto the set 
$$(a_1, a_2)\in \A^2(F): {a_1^2-4a_2} \text{  is a square in } E.$$ 
We note that the image of the set of semisimple elements of $\fg$ is the complement of the origin in $\A^2(F)$, and the image of the set of regular semisimple elements is the complement of the locus $a_1^2-4a_2=0$. 
\end{example} 

This situation is general: all Cartan subalgebras become conjugate to $\ft$ over the algebraic closure of $F$; Chevalley map is defined over $F$, and on $F$-points, the images of $(\Lie S)(F)$ under Chevalley map cover a Zariski open subset of $\A_G(F)$ 
as $S$ runs over a set of representatives of the $F$-conjugacy classes of tori.  

Now we return to the group itself; here the situation gets more complicated because of the central isogenies.

\subsubsection{Semi-simple simply connected split group}
Assume that $\bG$ is split over $F$, and let $\bT$ be an $F$-split maximal torus of $\bG$.
We shall see that $\bT/W\simeq \ft/W = \A_G$ in this case. 
To do this,  we construct a basis for the coordinate ring of $\bT/W$ (see \cite[\S 3.3]{langlands-frenkel-ngo}). 
Let $\alpha_1, \dots \alpha_r$ be a set of simple roots for $\bG$ relative to $\bT$ (since $\bG$ is assumed to be semi-simple, the root lattice spans the same vector space as the character lattice $X^\ast(\bT)$, so there are $r$ simple roots). 
Let $\mu_i$ be the fundamental weights, as above, defined by $\mu_i(\alpha_j^\vee)=\delta_{ij}$ for $1\le i,j\le r$. 
We recall that for a semi-simple algebraic group, \emph{simply connected} means that the character 
lattice $X^\ast(\bT)$ coincides with the weight lattice, i.e. $\mu_i$ with $i=1, \dots r$ constitute  a $\Z$-basis of $X^\ast(\bT)$. 

Let $\rho_i$ be the algebraic representation of $G$ of the highest weight $\mu_i$  for $i=1, \dots r$, 
and let $a_i(t)=\tr \rho_i(t)$. 
These functions are algebraically independent over $F$ and $$\bT/W\simeq \spec F[a_1, \dots, a_r].$$  

As above, we get the map $\fc:\bG\to \A_G$, defined by $g\mapsto (\tr\rho_i(g))$. 
This map for the group is called Steinberg map.

\begin{example} As a baby example, take $\bG=\Sl_2$, with $\bT$ the torus of diagonal matrices, and let $\rho_1$ be its standard representation on $F^2$.  For $x\in \F^\times$, let $t(x)\in \bT(F)$ be the one-parameter subgroup of diagonal matrices,  $t(x)=\diag(x, x^{-1})$. 
Then the weights of $\rho_1$ are $\mu_1:=\left(\diag(x, x^{-1})\mapsto x\right)$ and 
$-\mu_1 = \left(\diag(x, x^{-1})\mapsto x^{-1}\right) $ (which form a single Weyl orbit). 
We have $a:=\tr(\rho_1)(t(x)) =x+x^{-1}$, and this is the coordinate on the affine line $\A^1=\A_{\Sl_2}$. 

More generally, for $\bG=\Sl_n$, we have $r=n-1$, and with the standard choice of simple roots $\alpha_i(\diag(x_1, \dots x_n))=x_i x_{i+1}^{-1}$, the above construction yields $\rho_1$ - the standard representation of $\Sl_n(F)$ 
on $F^n$, and $\rho_i=\wedge^i \rho_1$ (see \cite[\S 15.2]{fulton-harris:RepresentationTheory} for a detailed treatment over $\C$, which in fact works for algebraic representations over $F$). 
We recover the same `characteristic polynomial' map: the trace of the $i$-th alternating power of the standard representation applied to a diagonal matrix is precisely the $i$-th coefficient of its characteristic polynomial
(which is, up to sign, the degree $i$ elementary symmetric polynomial of the eigenvalues). 


(Note, however, that this is a coincidence that holds just for groups of type $A_n$: the isomorphism $\rho_i\simeq \wedge^i \rho_1$ does not hold for other types; 
we discuss this issue below in \S \ref{sub:naive_meas}). 
\end{example}

{\bf Caution:} Note that unlike the typical situation when one has an algebraic homomorphism of Lie algebras which then is `integrated' to obtain a homomorphism of simply connected Lie groups, Chevalley map on $\fg$ is \emph{not} the differential of Steinberg map (e.g. for $\Sl_2$, the map on $\fsl_2$ is $X\mapsto \det(X)$, while on $\Sl_2$ the map is $g\mapsto \tr(g)$).

\subsubsection{Split reductive group with simply connected derived subgroup}
Let $\bG$ be a split, reductive group of rank $r$, with simply connected derived group $\bG^\der$ (whose Lie algebra we will denote by $\fg^\der$). Let $\bZ$ be the connected component of the  centre of $\bG$. 
By our assumption that $\bG$ is split, $\bZ$ is a split torus. 
Let $T\supset Z$ be a split maximal torus in $G$, $T^\der = T \cap G^\der$ (note that $T^\der$ is \emph{not} the derived group of $T$), and let $W$ be the Weyl group of $G$ relative to $T$.  Let $\A_{G^\der}=T^\der/W$ 
be the Steinberg quotient for the semisimple group $\bG^\der$.  Let us denote $\rank(\bZ)$ by $r_Z$.
(Naturally, the most common situation is $r_Z=1$. )
We have $\A_{G^\der}\simeq \A^{r-r_Z}$.  
We have the exact sequence of algebraic groups [(3.1) in \cite{langlands-frenkel-ngo}]: 
\begin{equation}\label{eq:exseq}
\xymatrix{
1 \ar[r]  & {\mathbf A}  \ar[r] & \bZ\times \bG^\der 
 \ar[r]&  \bG \ar[r] & 1, \quad {\mathbf A}=\bZ\cap \bG^\der. 
}
\end{equation}
 For example, for $\bG=\GL_2$, the group ${\mathbf A}$ is the algebraic group $\mu_2$ of square roots of $1$; it is defined by the equation $x^2=1$. \footnote{It is important to think of ${\mathbf A}$ as a group scheme. As the authors point out, this group scheme presents an `annoying difficulty' in characteristic $2$ (by not being \'etale).} 

Na\"ively, then, one would like to define Steinberg-Hitchin base as $\A_{G^\der} \times \bZ$, 
and  establish a correspondence between the stable conjugacy classes in $G$ and the points of the base, as it was done for semi-simple simply connected groups.  
The obstacle is that we cannot really define a good map from $\bG$ to $\A_{G^\der} \times \bZ$ over $F$ by means of the exact sequence (\ref{eq:exseq}): first,  the decomposition $g=g'z$ with $g'\in G^\der$ and $z\in Z$ is defined only up to replacing $g'$ and $z$ with $ag'$, $az$ ($a\in {\mathbf A}(F)$), and second,  the map $(g', z)\to g'z$ is in general not surjective on $F$-points: for example for $\GL_2$, its image only consists of elements whose determinant is a square in $F$. 

The way to deal with this issue is described in \cite[(3.15)]{langlands-frenkel-ngo}: the set of $F$-points of the Steiberg-Hitchin base 
$\mathfrak A_G$ is defined as the union over cocycles 
$\eta\in H^1(F,A)$ of the spaces $(\mathfrak B_\eta(F)\times \bZ_\eta(F))/{\mathbf A}(F)$, where 
$\mathfrak B_\eta$ and $\bZ_\eta$ are torsors of, respectively, $\A_{G^\der}$ and $\bZ$, defined by the cocycle $\eta$. 

Finally, note that for the Lie algebra there is no issue because the Lie algebra actually splits as a direct sum $\fg=\fg^\der\oplus \mathfrak z$, and this is why we could treat all \emph{reductive} Lie algebras above on equal footing.

The situation is more complicated if $\bG^\der$ is not simply connected, as Steinberg quotient in this case will no longer be an affine space. We will not address this case (as well as the non-split case) in this note. 

\subsection{Weyl discriminant} We recall the definition and the basic properties of the Weyl discriminant (for the Lie algebra, the main source is  \cite[\S\S 7, 14]{kottwitz:clay}). 

\subsubsection{Weyl disriminant on the Lie algebra}\label{sub:weyl_discr}
\begin{definition}
Let $\fg$ be a reductive Lie algebra, let $X\in \fg$ be a regular semisimple element, and let $T=C_G(X)$ be its centralizer with the Lie algebra $\ft=\Lie (T)$. Then 
$$
D(X)= \det(\ad(X)\vert_{\fg/\ft})$$
is called the Weyl discriminant of $X$.
\end{definition}
The discriminant is, in fact, a polynomial function on $\fg$ (and thus extends to all of $\fg$ from the dense subset of regular semisimple elements): $D(X)$ is the lowest non-vanishing coefficient of the characteristic polynomial of $\ad(X)$ (see \cite[7.5]{kottwitz:clay}).
This interpretation allows us to give an intrinsic characterization of the set of regular semisimple elements: in fact,  
$X\in \fg$ is regular semisimple if and only if  $D(X)\neq 0$; thus it can be taken as a definition of \emph{regular semisimple}. 

We also recall the expression for $D(X)$ in terms of roots: 
\begin{equation}\label{eq:Droots}
D(X)=\prod_{\alpha\in \Phi} \alpha(X)=
(-1)^{\frac{\dim\fg-\rk\fg}{2}}\left(\prod_{\alpha\in \Phi^+} \alpha(X)\right)^2,
\end{equation}
where $\Phi$ is the set of all roots and $\Phi^+ $ is any set of positive roots.

\begin{example}\label{ex:Weyl_disc} 
We compute the explicit expressions for the Weyl discriminant in terms of the eigenvalues of $X$, in the cases 
$\fg=\fsl_n$ and $\fg=\fsp_{2n}$,  for use in future examples.

Ler $X\in \fg$ have eigenvalues $\lambda_i\in \bar F$. 
 
For $\fg=\fsl_n$, the roots are $\alpha_{ij}(X)= \lambda_i-\lambda_j$, $1\le i, j \le n \text{ and  } i\neq j$. 
Then the Weyl discriminant of $X$ coincides with the polynomial discriminant of the characteristic polynomial of $X$: 
$$D(X)=\prod_{{1\le i, j \le n} \atop{ i\neq j}}(\lambda_i-\lambda_j).$$
(We observe that the eigenvalues satisfy the relation $\sum_{i=1}^n\lambda_i=\tr(X)=0$).

For $\fg=\fsp_n$, the explicit expression for the roots depends on the choice of the coordinates for the standard representation (though of course the answer does not). 
We define $\Sp_{2n}$ and $\fsp_{2n}$ explicitly as:
$$ \Sp_{2n}(F)=\{g\in \GL_{2n} (F): g^{t} J g =J\},  \quad   \fsp_{2n}(F)=\{X\in \fgl_{2n} (F): X^{t} J + JX =0\},$$
where $J=\left[\begin{smallmatrix}0 & I_n \\ -I_n & 0 \end{smallmatrix}\right]$ and $I_n$ stands for the $n\times n$-identity matrix.
Then the eigenvalues of any element $X\in \fsp_{2n}$ satisfy 
$\lambda_{n+i}=-\lambda_i$, $1\le i \le n$, and 
the set of values of the roots at $X$ is (cf. \cite[\S 12.1]{humphreys}): 
$\{\pm (\lambda_i\pm \lambda_j), 1\le i, j \le n, i\neq j\}\cup \{\pm 2\lambda_i, 1\le i\le n \}$. 
Then we get: 
$$D(X)=
(-1)^{\frac{n(n+1)}2}2^n\prod_{{1\le i, j \le n} \atop{ i\neq j}}(\lambda_i^2-\lambda_j^2)\prod_{i=1}^n \lambda_i.$$
\end{example}

We also observe that in a reductive Lie algebra, the Weyl discriminant of any element is computed entirely via the derived subalgebra $\fg^\der$, by definition (since $\fg/\ft=\fg^\der/\ft^\der$). 

\subsubsection{Weyl discriminant on the group}
On the group, the definition is obtained essentially by reducing to the Lie algebra: 
\begin{definition}
Let $\gamma \in \bG(F)$ be a regular semisimple element, and let $T=C_G(X)$ be its 
centralizer with the Lie algebra $\ft=\Lie (T)$. Then the Weyl discriminant of $\gamma$ is  
$$
D(\gamma)= \det(1-\Ad(\gamma)\vert_{\fg/\ft}).$$
\end{definition}
Similarly to the Lie algebra case, the Weyl discriminant has an expression in terms of the (multiplicative) roots: 
\begin{equation}\label{eq:disc_group}
D(\gamma)=\prod_{\alpha\in \Phi} (1-\alpha(\gamma)) =
(-1)^{\frac{\dim\fg-\rk\fg}{2}}\rho^2(\gamma)\left(\prod_{\alpha\in \Phi^+}(1-\alpha(\gamma))\right)^2,
\end{equation} 
where $\rho$ is half the sum of positive roots, so $2\rho$ is the sum of positive roots (in the above formula, 
$\rho^2(\gamma)$ is the value of the character $2\rho$ at $\gamma$).
Note that the second part of the formula 
expressing the Weyl discriminant as a product over \emph{positive} roots now has an extra factor that did not arise in the Lie algebra case (the examples below illustrate this).  
 
We again show the calculation for the general linear and symplectic groups. 
Note that the final expressions are a lot simpler when restricted to $\bG^\der$. 

\begin{example}\label{ex: disc_group}
In all examples, we give  an explicit expression for the Weyl discriminant of a regular semisimple element 
$\gamma\in G(F)$ with eigenvalues $\{\lambda_i\}\subset \bar F$. We observe that these expressions do not depend on the field (so one could even consider $F=\C$). 
\begin{enumerate}
\item $G=\GL_2$:  
$D(\gamma)=(1-\frac{\lambda_1}{\lambda_2})(1-\frac{\lambda_2}{\lambda_1}) = -\frac{(\lambda_1-\lambda_2)^2}{\det(\gamma)}$. 
\item $G=\GL_n$: $D(\gamma)= \prod_{{1\le i, j \le n} \atop{ i\neq j}}\left(1-\frac{\lambda_i}{\lambda_j}\right)$. 

\item $G=\Sp_{2n}$: 
$D(\gamma)= 
 \prod_{1  \le i < j \le n} d_{ij} \cdot \prod_{1 \le i \le n}
d_{i}$, 
{where}
$$\begin{aligned}
d_{ij} &=
\left(1-\frac{\lambda_i}{\lambda_j}\right)\left(1-\frac{\lambda_j}{\lambda_i}\right)\left(1-\lambda_i\lambda_j\right)\left(1-\frac1{\lambda_i\lambda_j}\right)\\
d_i &= \left(1-\lambda_i^2\right)\left(1-1/\lambda_i^2\right).
\end{aligned}
$$

\item $G={\mathrm {GSp}}_{2n}$.  By definition, ${\mathrm {GSp}}_{2n}(F)$ is the algebraic group whose functor of points is 
defined as, for any $F$-algebra $R$,  
$${\mathrm {GSp}}_{2n}(R)= \{g\in \GL_{2n}(R): \exists \nu(g)\in R^\times,  g^tJg =\nu(g) J\},$$ 
where $J$ is the same matrix as the one used to define $\Sp_{2n}$.
It fits into the exact sequence of algebraic groups 
$$1\to \Sp_{2n}\to {\mathrm {GSp}}_{2n} \to \G_m \to 1,$$
where the map to $\G_m$ is the map $g\mapsto \nu(g)$, called the \emph{multiplier}.
We have ${\mathrm {GSp}}_{2n}^\der =\Sp_{2n}$, so ${\mathrm {GSp}}_{2n}$ is a good example (other than $\GL_n$) of a reductive but not semi-simple algebraic group whose derived subgroup is simply connected.  

If the element $\gamma$ has multiplier $\nu$, then as above for $G=\Sp_{2n}$, 
$D(\gamma)= 
 \prod_{1  \le i < j \le n} d_{ij} \cdot \prod_{1 \le i \le n}
d_{i}$, 
{but now we have }
$$\begin{aligned}
d_{ij} &=
\left(1-\frac{\lambda_i}{\lambda_j}\right)\left(1-\frac{\lambda_j}{\lambda_i}\right)\left(1-\frac{\lambda_i\lambda_j}{\nu}\right)\left(1-\frac{\nu}{\lambda_i\lambda_j}\right)\\
d_i &= \left(1-\frac{\lambda_i^2}{\nu}\right)\left(1-\frac{\nu}{\lambda_i^2}\right).
\end{aligned}
$$
\end{enumerate} 
\end{example}

\subsection{Orbital integrals: the Lie algebra case}\label{sub:lie} 
We start with a prototype case of a Lie algebra. 
\subsubsection{Definitions: Lie algebra}\label{sub:def_lie} 
Let $\bG$ be a connected reductive group defined over a local field $F$, as above. The orbital integrals  (for regular semisimple elements) on the Lie algebra 
are distributions on the space $C_c^\infty(\fg)$ of the locally constant compactly supported functions on $\fg$, defined as follows.

Let $X\in \fg$ be a regular semisimple element, and let $f\in C_c^\infty(\fg)$. 
Since $X$ is regular semisimple, its centralizer is a torus $T=C_G(X)=\bT(F)$, as discussed above, and thus the adjoint orbit of $X$ can be identified 
with the quotient $\bT(F)\backslash \bG(F)$.  
Both $\bT(F)$ and $\bG(F)$ can be  endowed with any of the natural measures discussed above  in
\S\ref{subseq:the_problem}. Once the measures on $G$ and $T$ are fixed, there is a unique quotient measure on $T\backslash G$, which
we will denote by  $\mu_{T\backslash G}$ (see e.g.,  \cite[\S 2.4]{kottwitz:clay} for the definition of the quotient measure in this context). 
The  orbital integral  with respect to this measure is
\begin{equation}\label{eq:oi}
O_X(f) :=\int_{T\backslash G} f(\Ad(g^{-1})X) d\mu_{T\backslash G}.
\end{equation}
We observe that there are finitely many $F$-conjugacy classes of tori in $G$; thus there are finitely many choices of measures that we need to make on the representatives of these conjugacy classes, and these choices endow the orbit 
of every regular semisimple element with a measure. If the canonical measures (in the sense of \cite{gross:motive}, discussed above in \ref{subsub:Neron}) are chosen on the tori, the resulting orbital integrals are called \emph{canonical}.  
This approach to the normalization of measures on the orbits is the one typically used in the literature. 

On the other hand, one can use Chevalley map defined above to normalize the measures on orbits. 
For a general reductive group $\bG$ and $X\in \fg$ regular semisimple, the fibre  $\fc^{-1}(\fc(X))$
over the point $a:=\fc(X)\in \A_G(F)$ 
is the stable orbit of $X$, which is a finite union of $F$-rational orbits. Thus, every $F$-rational orbit is an open subset of $\fc^{-1}(a)$ for some $a\in \A_G(F)$, and if we define  a measure on this fibre, we get a measure on every $F$-rational orbit contained in it by restriction. 

In the Introduction, we have fixed measures on affine spaces \emph{with a choice of a basis}.  
The Lie algebra $\fg$ is an affine space; it does not come with a canonical choice of a basis, and this choice would not matter much in the discussion below; we can choose an arbitrary $F$-basis $\{e_i\}_{i=1}^n$ of  $\fg$ for our purposes. 
This basis then gives rise to a differential form $\omega_{\fg}=\wedge_{i=1}^n dx_i$ on $\fg$, which gives a measure $|\omega_\fg|$ as in the Introduction. 
The space $\A_G$ is also an affine space under our assumptions (since at the moment we are working with the Lie algebra);  and in our construction it comes with a choice of basis $\{\rho_i\}_{i=1}^r$ as in \S \ref{subsub:Liealg}.  We let $\omega_{\A_G}$ be the differential form associated with this basis. 

Thus we get the quotient measure on each fibre  $\fc^{-1}(\fc(X))$: it is the measure associated with the differential form $\omega_{\fc(X)}^\geom$ such that 
\begin{equation}\label{eq:geom_lie1}
\omega_\fg=\omega^{\geom}_{\fc(X)}\wedge \omega_{\A_G}.
\end{equation}
That is, by definition of $\omega_{\fc(X)}^{\geom}$,  for any $f\in C_c^\infty(\fg)$,
\begin{equation}\label{eq:geom_lie2}
\int_{\fg} f(X)\, d|\omega_{\fg}|=\int_{\A_G}\int_{\fc^{-1}(\fc(X))}f(X)\,d |\omega^{\geom}_{\fc(X)}| 
\,d |\omega_{\A_G}|.
\end{equation}

Our immediate goal is to
derive the relationship between these two measures on the orbit: $\mu_{T\backslash G}$ and $|d \omega_a^{\geom}|$, where $a=\fc(X)$. 
First we observe that since both measures are quotient measures of a chosen Haar measure on $G$, their ratio does not depend on the choice of the measure on $G$, as long as it is compatible with the choice of the measure on the Lie algebra; thus at this point, the choice of the measure on $G$ is determined by our choice of the form 
$\omega_{\fg}$. (Conversely, one often chooses a measure on $G$ first, and this determines $\omega_\fg$.)  
At the same time, the choice of the measures on the representatives of conjugacy classes of tori affects the measure 
$\mu_{T\backslash G}$ but not the measure $|\omega_a^{\geom}|$. 
Here we address two natural choices of such measures: 
\begin{enumerate}[(i)]
\item Let $\omega_G$ be a volume form on $G$, and on each algebraic torus $T$, define the form $\omega_T$ using the characters of the torus as in (\ref{eq:omegaT}). 
Then we get the quotient measure $\omega_{T\backslash G}$ on each orbit. 
This is the measure discussed in \cite{langlands-frenkel-ngo}.
We discuss this measure in this section.   

\item Use the measure denoted above by $|\omega^\can|$, associated with the N\'eron model, on each torus. This measure on the orbits is discussed in the next section.  

\end{enumerate} 

Thus our  first goal is to determine the ratio of $\omega_{T\backslash G}$ to $\omega_a^\geom$, for each $X\in \fg^\rss$ (which determines $T$ and $a$). It turns out that the  conversion between these two measures is based on exactly the same calculation as the Weyl integration formula, which we now review.
\subsubsection{Weyl integration formula, revisited}
We follow \cite[{\S 7, \S14.1.1}]{kottwitz:clay}, and use the same notation (except we continue to use boldface letters to denote varieties). 
For a torus $T\subset G$, let $W_T={\mathbf N_G(T)}(F)/\bT(F)$ be the relative Weyl group of $T$ (cf. \cite[{\S 7.1}]{kottwitz:clay}). 
Weyl integration formula (which we quote in this form from \cite[\S 7.7]{kottwitz:clay}), 
for an arbitrary Schwartz-Bruhat function $f\in C_c^\infty(\fg)$, asserts:
\begin{equation}\label{eq:Weyl}
\int_\fg f(Y) d|\omega_\fg(Y)| =\sum_T \frac 1{|W_T|}\int_{\ft}|D(X)|
\int_{T\backslash G} f(\Ad(g^{-1}) X ) d|\omega_{T\backslash G}| \, d|\omega_\ft(X)|,
\end{equation}
where the sum on the right-hand side  is over the representatives of the conjugacy classes of tori in $G$. 

The proof of this formula relies on a computation of the Jacobian of the map 
\begin{equation}\label{eq:themap}
 \begin{aligned}(T\backslash G)\times \fg &\to \fg \\
 (g, X) &\mapsto \Ad(g^{-1})X.
 \end{aligned}
 \end{equation} 
This map is $|W_T|:1$, and its  Jacobian  at $X$ is precisely $|D(X)|$ (see \cite[\S 7.2]{kottwitz:clay} for a beautiful exposition).  

\subsubsection{The relation between geometric and canonical orbital integrals for the Lie algebra}
Let us just na\"ively compare the right-hand side of the Weyl integration formula with the right-hand side of 
(\ref{eq:geom_lie2}) above (since the left-hand sides are the same). 
First, note (as already discussed above) that our space $\A_G$ is,  up to a set of measure zero, a disjoint union of images of the representatives of the conjugacy classes of tori, and for each torus, Chevalley map $\fc: T\to \A_G$ is $|W_T|:1$. 
Thus, the right-hand side of (\ref{eq:geom_lie1}) would look exactly like the right-hand side of the Weyl integration formula (\ref{eq:Weyl}) if we could replace integration over $\A_G$ with the sum of integrals over the representatives of the conjugacy classes of 
Cartan subalgebras $\ft=\Lie(T)$ (as $T$ ranges over the conjugacy classes of maximal tori).

The situation is summarized by  the commutative diagram:
\begin{equation}\label{eq:diagram}
\xymatrix{
&(T\backslash G)\times \ft \ar[d] \ar[r] & \fg \ar[d] \\
&\ft \ar[r] & \A_G=\ft^\spl/W
}
\end{equation}
Here the horizontal map on the top is the map (\ref{eq:themap}); this map is $|W_T|:1$ and its Jacobian at $(g, X)$  
is $|D(X)|$ (see \cite[{\S 7.2}]{kottwitz:clay}). 
The vertical arrow on the left is projection onto $\ft$; the vertical arrow on the right is Chevalley map $\fc$; 
and the horizontal arrow at the bottom is $\fc \vert_{\ft}$, which is also $|W_T|:1$ . 

We have the forms $\omega_\fg$ on $\fg$ and $\omega_{\A_G}$ on $\A_G$;
let us choose the invariant differential form
$\omega_T$ on $T$ defined by the characters of $T$ as in (\ref{eq:omegaT}); we also need an invariant top degree form  $\omega_G$ on $G$, which is required to be 
compatible with $\omega_\fg$ under the exponential map, which determines it uniquely. 
As discussed above, given $\omega_G$ and $\omega_T$, we get the quotient measure $|\omega_{T\backslash G}|$ that corresponds to a differential form 
$\omega_{T\backslash G}$ satisfying 
$\omega_T\wedge \omega_{T\backslash G} =\omega_G$, and a differential form $\omega^{\geom}_{\fc^{-1}(a)}$ on each fibre of the map $\fc:\fg \to \A_G$. 
Both $\omega_{T\backslash G}$ and $\omega_{\fc^{-1}(a)}^{\geom}$ are generators of the top exterior power of the cotangent bundle of $\bT\backslash \bG$, hence they differ by a constant (which can depend on $a$). 

Looking at the top, right and bottom maps in the diagram (\ref{eq:diagram}), respectively, we see that these differential forms are related as follows (the first and third lines follow from the Jacobian formula and the fact that the horizontal maps are $|W_T|:1$; 
the second line is the definition of $\omega^{\geom}_{\fc^{-1}(a)}$ with $a=\fc(X)$): 
\begin{equation}\label{eq:measures}
\begin{aligned}
 & |W_T|^{-1} |D(X)| \omega_{T\backslash G}\wedge \omega_{\ft} = \omega_{\fg}\\
 & \omega_{\fg}(X)=\omega^{\geom}_{\fc^{-1}(\fc(X))}\wedge \omega_{\A_{G}}(\fc(X))
 \\
 & |W_T|^{-1} |\Jac(\fc\vert_{\ft})| \omega_{\ft}=\omega_{\A_G}.
\end{aligned}
\end{equation} 
We conclude that 
\begin{equation}\label{eq:jac_chev}
|D(X)|\omega_{T\backslash G}(X)= |\Jac(\fc\vert_{\ft}) (X)| \omega^{\geom}_{\fc^{-1}(\fc(X))}, \quad  X\in \ft.
\end{equation}
The Jacobian of the restriction of Chevalley map $\fc \vert_{\ft}$ at $X$ is $\prod_{\alpha\in \Phi^+}\alpha(X)$, \emph{up to a constant in $F^\times$} (see \cite[\S 14.1]{kottwitz:clay}).  This constant depends on the choice of coordinates on $\ft$. 
We use the basis of the character lattice  $\{\chi_i\}_{i=1}^r$, as in (\ref{eq:omegaT}), to define the coordinates on $\ft$. 
With this choice of coordinates, the constant turns out to be $\pm 1$; 
the sign depends on the ordering of the characters $\chi_i$ and does not affect the resulting measure.  
The reason  for this is that the constant is $1$ for the split torus (this is not trivial; it follows from the argument in 
\cite[ch.5,\S5]{Bourbaki:Lie}, and the group version of this statement is also proved in \cite{langlands-frenkel-ngo} over $\C$ (see proof of Proposition 3.29, especially (3.33) and (3.34)); the argument holds for any split torus). 
If $T$ is not split,  
we can work over an extension $E$ where $T$ splits, and since our coordinate system is precisely the one 
used for the split torus over $E$, 
the equality continues to hold.  
We observe that on the Lie algebra, we have 
\begin{equation}\label{eq:root_discr}
|\prod_{\alpha\in \Phi^+}\alpha(X)| = |D(X)|^{1/2}.
\end{equation}

Putting the relations (\ref{eq:measures}), (\ref{eq:jac_chev}), and (\ref{eq:root_discr}) together, we obtain the following Proposition.
\begin{proposition}\label{prop:cT}(cf. \cite[Proposition 3.29]{langlands-frenkel-ngo}.)
 Let $\fc:\fg\to \A_G$ be Chevalley map as above; let $X\in \fg$ be a regular semisimple element, let 
the algebraic torus $T$ be its centralizer, with the Lie algebra $\ft$. 
Then with the measures defined as above, we have:  
\begin{equation*}\label{eq: final_liealg}
|\omega^{\geom}_{\fc^{-1}(\fc(X))}|=|D(X)|^{1/2} |\omega_{T\backslash G}|. 
\end{equation*}
\end{proposition}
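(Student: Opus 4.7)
The plan is to extract the comparison of the two measures from the commutative diagram (\ref{eq:diagram}) by computing differential forms on each of its three non-trivial arrows at a regular semisimple $X \in \ft^\rss$ and then chasing through the obvious compatibility. The strategy is essentially encoded in the three equalities of (\ref{eq:measures}): the first arises from the Jacobian of the top arrow (the Weyl integration formula), the second is the very definition of the geometric form, and the third arises from the Jacobian of the bottom arrow (Chevalley restriction). Matching the three yields an identity of differential forms whose absolute value is the assertion.

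More precisely, I would first recall that the top arrow $(g, X) \mapsto \Ad(g^{-1})X$ is $|W_T|$-to-one with Jacobian $|D(X)|$ at any regular semisimple $X$; this is the main computation underlying the standard proof of the Weyl integration formula (\ref{eq:Weyl}), and it gives the first line of (\ref{eq:measures}). Next, by the very definition of $\omega^{\geom}_{\fc^{-1}(\fc(X))}$ via (\ref{eq:geom_lie1}), the second line of (\ref{eq:measures}) holds. For the third line, I would compute the Jacobian of $\fc|_\ft$ with respect to the coordinates on $\ft$ coming from the basis $\{\chi_i\}_{i=1}^r$ of the character lattice (the same basis used to build $\omega_T$ in (\ref{eq:omegaT})). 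The classical statement, traceable to Bourbaki, is that this Jacobian equals $\pm \prod_{\alpha \in \Phi^+} \alpha(X)$, the sign coming from the ordering of the $\chi_i$.

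Combining the three lines of (\ref{eq:measures}) at $X \in \ft$ and cancelling the common factor $|W_T|^{-1} \omega_\ft$ produces
$$|D(X)|\,\omega_{T\backslash G}(X) = |\Jac(\fc|_\ft)(X)|\,\omega^{\geom}_{\fc^{-1}(\fc(X))},$$
which is precisely (\ref{eq:jac_chev}). Substituting the root-theoretic identity (\ref{eq:root_discr}), namely $|\Jac(\fc|_\ft)(X)| = |\prod_{\alpha \in \Phi^+}\alpha(X)| = |D(X)|^{1/2}$, and dividing yields $|\omega^{\geom}_{\fc^{-1}(\fc(X))}| = |D(X)|^{1/2}\,|\omega_{T\backslash G}|$, as required.

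The main obstacle is pinning down the normalization constant in the Jacobian of $\fc|_\ft$: one must know that, relative to the coordinates dual to $\{\chi_i\}$, the proportionality between the differential of $\fc|_\ft$ and $\prod_{\alpha \in \Phi^+}\alpha$ is exactly $\pm 1$ and not some other unit in $F^\times$. For a split torus this reduces to a direct computation in a Chevalley basis (as referenced from Bourbaki and from the proof of Proposition 3.29 in \cite{langlands-frenkel-ngo}); the nonsplit case then follows by base change to a splitting field $E/F$, since the coordinate system on $\ft$ we use is the base change of the split one and both sides of the comparison are defined over $F$.
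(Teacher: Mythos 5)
Your proposal is correct and follows essentially the same route as the paper: it runs the same three equalities in (\ref{eq:measures}) coming from the commutative diagram (\ref{eq:diagram}), derives (\ref{eq:jac_chev}), and then invokes the normalization of the Jacobian of $\fc|_\ft$ (the $\pm 1$ constant in the character-lattice coordinates, proved for the split torus and transported to the nonsplit case by base change) together with (\ref{eq:root_discr}) to conclude. There is nothing to add beyond what the paper itself records in the paragraph preceding the proposition.
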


We conclude this section with an  example illustrating the proposition. 
\begin{example} Let $\fg=\fsl_2$ and let $\ft=\ft^{\spl}$ be the subalgebra of diagonal matrices.
Then we have $\fc:X(t):=\left[\begin{smallmatrix} t & 0\\ 0& -t \end{smallmatrix}\right] \mapsto -t^2$; here the Jacobian is just the derivative (since we are dealing with a function of one variable), so 
$\Jac(\fc\vert_{\ft})=-2t = -\alpha(X(t))$. 

Now consider $\ft_E$ -- a non-split Cartan subalgebra corresponding to a quadratic extension 
$E=F[\sqrt{\epsilon}]$:
$\ft_E= \left\{X(t):=\left[\begin{smallmatrix} 0& t\\ \epsilon t & 0  \end{smallmatrix}\right], \ t\in F  \right\}$. 
We have $\fc\vert_{\ft_E} = -\epsilon t^2$, and its Jacobian is 
$-2\epsilon t =-\sqrt{\epsilon} \alpha(X(t))$ (note that the eigenvalues of our element are $\pm\sqrt{\epsilon} t$). 
At the same time, on $\ft_E$, the measure $\omega_{T}$ is $\sqrt{\epsilon}dt$ (recall that $\omega_T$ is defined by means of characters of $T$ over the algebraic closure). Hence, with this choice of the differential form,  
we obtain, again, with $a=\fc(X(t))$: 
$$da = -\sqrt{\epsilon} \alpha(X(t)) dt = -\alpha(X(t)) \omega_T(X(t)).$$
\end{example}

\subsection{The simplest group case}\label{sub:group}
 Let us assume that $\bG$ is semi-simple, split, and  simply connected. 
We are now almost ready to explain the relation (3.31) of \cite{langlands-frenkel-ngo} (see equation (\ref{eq:FLN}) above). 
The definitions are essentially the same as in the Lie algebra case: 
\begin{itemize}
\item an orbit of a regular semisimple element $\gamma\in G:=\bG(F)$, as a manifold over $F$, can be identified with $T\backslash G$, where $T$ is the centralizer of $\gamma$. As above, if $\omega_G$ is a volume form on $G$ and $\omega_T$ - a volume form on $T$, we get the measure $|\omega_{T\backslash G}|$ on the orbit of $\gamma$. 
\item The regular fibres of the map $\fc: G\to \A_G$ are stable orbits; each stable orbit of a regular semisimple element is a finite disjoint union of $F$-rational orbits, and thus we get the geometric measure $|\omega_a^{\geom}|$ on each such orbit, by considering the quotient of the measures on $G$ and $\A_G$.
\end{itemize}

For $\gamma\in G$, let 
$$\Delta(\gamma):= \rho^{-1}(\gamma)\prod_{\alpha>0}(1-\alpha(\gamma)); \quad \text{thus } |\Delta(\gamma)|^2=|D(\gamma)|.$$ 
\begin{theorem}(\cite [Relation (3.31)]{langlands-frenkel-ngo}.) Let $\bG$ be a connected semi-simple simply connected group over a local field $F$, and let $\gamma\in G$  be a regular semisimple element.  Then for any $f\in C_c^\infty(G)$, the orbital integrals 
with respect to the geometric measure on the orbit of $\gamma$, and the measure $\omega_{T\backslash G}$ (which, by definition, is the quotient of the measures $|\omega_G|$ on $G$ and $|\omega_T|$ on $T$, with $\omega_T$ defined by (\ref{eq:omegaT}))  are related via: 
$$ \int_{\ri(\gamma)}f(g)d|\omega_{\fc(\gamma)}^{\geom}(g)|= |\Delta(\gamma)|
\int_{T\backslash G}f(\Ad(g^{-1})\gamma)\omega_{T\backslash G}(g),$$
where on the left, the orbit $\ri(\gamma)$ is thought of as an open subset of the stable orbit $\fc^{-1}(\fc(\gamma))$ and endowed with the geometric measure $|\omega_{\fc(\gamma)}^{\geom}(g)|$ as above. 
\end{theorem}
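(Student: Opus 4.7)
The plan is to run exactly the argument of Proposition~\ref{prop:cT} with each Lie-algebra ingredient replaced by its group counterpart: the Weyl integration formula on $G$, the definition of $\omega_{\fc(\gamma)}^{\geom}$ as a fiber measure for Steinberg map, and a Jacobian computation for $\fc|_T\colon T\to\A_G$. Concretely, I would start from the commutative diagram
$$\xymatrix{
(T\backslash G)\times T \ar[d] \ar[r] & G \ar[d]^{\fc} \\
T \ar[r]_{\fc|_T} & \A_G
}$$
in which the top arrow is $(g,t)\mapsto g^{-1}tg$ and both horizontal maps are $|W_T|\!:\!1$ onto their images. The top arrow has Jacobian $|D(\gamma)|$ at a regular semisimple $\gamma$ (the group Weyl integration formula), and $\fc|_T$ has some Jacobian $|J_T(\gamma)|$ to be determined.

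Next I would express $\int_G f\,d|\omega_G|$ in two ways: first, via the Weyl integration formula, giving $\sum_T\frac{1}{|W_T|}\int_T |D(t)|\int_{T\backslash G}f(g^{-1}tg)\,d|\omega_{T\backslash G}|\,d|\omega_T|$; and second, via the Steinberg fibration, giving $\int_{\A_G}\int_{\fc^{-1}(a)} f\,d|\omega_a^{\geom}|\,d|\omega_{\A_G}|$, then pulling $\A_G$ back to the disjoint union of tori through $\fc|_T$ (up to measure zero), which produces the extra factor $|W_T|^{-1}|J_T(t)|$. Since $f\in C_c^\infty(G)$ is arbitrary and different conjugacy classes of tori contribute disjoint open subsets of $\A_G$, matching integrands torus by torus yields
$$|D(\gamma)|\,|\omega_{T\backslash G}|=|J_T(\gamma)|\,|\omega_{\fc(\gamma)}^{\geom}|.$$
Because $|D(\gamma)|^{1/2}=|\Delta(\gamma)|$, the theorem reduces to the identity $|J_T(\gamma)|=|\Delta(\gamma)|$.

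The main step, and the expected obstacle, is establishing this Jacobian formula. On the Lie algebra the analogous computation gave $|\Jac(\fc|_\ft)(X)|=\prod_{\alpha>0}|\alpha(X)|=|D(X)|^{1/2}$, and the constant of proportionality relative to the character coordinates was trivial. On the group the same strategy — reduce to the split case by passing to a splitting extension, then expand $a_i=\tr\rho_i$ in multiplicative coordinates on $T^\spl$ — introduces an extra factor of $\rho^{-1}(\gamma)$ coming from the Weyl character/denominator formula: the determinant of the matrix $\bigl(\partial a_i/\partial \chi_j\bigr)$ at $\gamma$ works out, up to a unit, to $\rho^{-1}(\gamma)\prod_{\alpha>0}(1-\alpha(\gamma))=\Delta(\gamma)$. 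This is precisely the content of equations (3.33)--(3.34) of \cite{langlands-frenkel-ngo}, which I would invoke (or reprove by directly applying the Weyl character formula to the basis $\{\rho_i\}$ of fundamental representations).

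Finally, substituting $|J_T(\gamma)|=|\Delta(\gamma)|$ into the measure identity above gives $|\omega_{\fc(\gamma)}^{\geom}|=|\Delta(\gamma)|\,|\omega_{T\backslash G}|$ on the rational orbit $\ri(\gamma)\subset\fc^{-1}(\fc(\gamma))$, and integrating $f$ produces the stated relation. A small side point to justify carefully is that in passing from the Weyl integration formula (which concerns one rational orbit at a time) to the geometric measure on the full fiber $\fc^{-1}(\fc(\gamma))$ (which is the stable orbit, possibly a union of several rational orbits), one must restrict attention to the specific rational orbit of $\gamma$; this is precisely the open-subset interpretation already built into the statement of the theorem.
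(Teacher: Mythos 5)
Your proposal is correct and follows essentially the same route as the paper: the same commutative diagram, the group Weyl integration formula supplying the Jacobian $|D(\gamma)|$ for the conjugation map, and the reduction of the Jacobian of $\fc|_T$ (relative to the invariant form $\omega_T$) to the split-torus case, which is exactly the content of Lemma~\ref{lem:JacT}. One small bookkeeping slip worth flagging: the quantity that equals $\Delta(\gamma)$ up to a unit is the Jacobian in the logarithmic coordinates, i.e.\ $\det\bigl(\chi_j\,\partial a_i/\partial\chi_j\bigr)$, not $\det\bigl(\partial a_i/\partial\chi_j\bigr)$ as you wrote — the latter is $\rho^{-1}(\gamma)\Delta(\gamma)$, as the $\SL_2$ case shows ($a=\chi+\chi^{-1}$ gives $\partial a/\partial\chi=1-\chi^{-2}=\chi^{-1}(\chi^{-1}-\chi)$, while $\Delta(\gamma)=\pm(\chi^{-1}-\chi)$); the overall conclusion is unaffected since the invariant form $\omega_T$ already carries the compensating $\prod_j\chi_j^{-1}=\rho^{-1}(\gamma)$.
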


We first explain two differences with the statement in \cite{langlands-frenkel-ngo}. 
\begin{remark} Our expression does not (yet) include the factor 
$L(1, \sigma_{T\backslash G})$ that appears in (3.31) of \cite{langlands-frenkel-ngo}.  
This factor appears simply by their definition of the measure $d\bar{g}_v:=L(1, \sigma_{T\backslash G})\omega_{T\backslash G}$ which appears on the right-hand side of (3.31).  
As we shall see in the next section, using the measure $d\bar{g}_v$ ensures that the local orbital integral on the right-hand side is $1$ for almost all places of a given number field (which is desirable for defining the orbital integral globally), 
and for almost all places this coincides with the orbital integral with respect to the canonical measure. 
%
\end{remark}

\begin{remark} Note that we stated the theorem as a relation between orbital integrals, whereas in \cite{langlands-frenkel-ngo} it is stated as a relation between \emph{stable} orbital integrals. Since the measure is a local notion, this is an equivalent statement: in fact, the assertion of the theorem is just that 
the two measures on the stable orbit (and hence, by restriction, on every rational orbit) are related via 
$$|\omega_{\fc(\gamma)}^{\geom}(g)| = |\Delta(\gamma)|
|\omega_{T\backslash G}(g)|.$$
\end{remark}

\subsubsection{Sketch of the proof.} As the measures are defined by differential forms, the calculation is carried out in the exterior power of the cotangent space, and hence it is essentially the same calculation as for the Lie algebra above. The only ingredients that needs to be treated slightly differently are the discriminant and the Jacobian of the map from $T$ to 
$T/W$. 
Indeed, for $\gamma\in G^\rss(F^\sep)$,  we still have the exact sequence of tangent spaces (see \cite[Lemma 26]{langlands-frenkel-ngo})
$$0\to Tan_\gamma(\fc^{-1}(a)) \to  Tan_\gamma \bG \to Tan_a(\A_G) \to 0,$$
and by definition, 
$\omega_{\fc(\gamma)}^\geom\wedge \omega_{\A_G} =\omega_G$; $\omega_G=\omega_T\wedge \omega_{T\backslash G}$. 
The proof proceeds exactly as for Lie algebras, except the map (\ref{eq:themap}) needs to be replaced with the map 
\begin{equation}\label{eq:themapgroup}
 \begin{aligned}(T\backslash G)\times G &\to G \\
 (g, \gamma) &\mapsto g^{-1}\gamma g, 
 \end{aligned}
 \end{equation} 
and the map $\ft\to \A_G=T/W$ is replaced with the map $T\to T/W$. 
The Jacobian of the first map is the group version of the Weyl discriminant (and fits into the group version of Weyl integration formula in the exact same way as it did for the Lie algebra): 
$$|W_T|^{-1} |D(\gamma)| \omega_{T\backslash G}\wedge \omega_{T} = \omega_{G}.$$
Next, we need to relate $\omega_T$ with $\omega_{\A_G}$. 
\begin{lemma}\label{lem:JacT}
(\cite[Proposition 3.29]{langlands-frenkel-ngo}.) Let $\bG$ be a split, semi-simple, simply connected group, and let 
$T\subset G$ be a maximal torus.
Let 
$\omega_T$ be defined by (\ref{eq:omegaT}). 
Then 
$$|W_T|^{-1}\omega_T(\gamma) = |\Delta(\gamma)|\omega_{\A_G}(\fc(\gamma)).$$
\end{lemma}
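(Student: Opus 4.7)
The plan is to mirror the Lie algebra calculation (\ref{eq:jac_chev}): compute the ratio $\Phi := (\fc|_T)^\ast\omega_{\A_G}/\omega_T$ as a rational function on $T$, identify it with a scalar multiple of the Weyl denominator $A_\rho = \sum_{w\in W}(-1)^{\ell(w)} e^{w\rho}$, and read off $|\Phi(\gamma)| = |A_\rho(\gamma)| = |\Delta(\gamma)|$; the last equality follows from the classical formula $A_\rho(\gamma) = \rho(\gamma)\prod_{\alpha>0}(1 - \alpha(\gamma)^{-1})$ compared against the definition of $\Delta(\gamma)$ in (\ref{eq:disc_group}). The factor $|W_T|^{-1}$ in the lemma accounts for the degree of the covering $T^\rss\to (T/W)^{\mathrm{reg}}$, in complete analogy with the $|W_T|^{-1}$ factor in the third line of (\ref{eq:measures}).

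Since $\bG$ is simply connected, the fundamental weights $\mu_1,\ldots,\mu_r$ form a $\Z$-basis of $X^\ast(T)$, so I take $\chi_i=\mu_i$ in (\ref{eq:omegaT}) to get $\omega_T = \bigwedge d\mu_i/\mu_i$, and use $\omega_{\A_G} = \bigwedge da_i$ with $a_i=\tr\rho_i$. The first observation is that $\Phi$ is $W$-anti-invariant: $\fc\circ w=\fc$ makes $(\fc|_T)^\ast\omega_{\A_G}$ $W$-invariant, whereas $w^\ast\omega_T = (-1)^{\ell(w)}\omega_T$ because $W$ acts on the character lattice with determinant $(-1)^{\ell(w)}$. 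By the Chevalley--Kostant description of $W$-anti-invariants in the coordinate ring of $T$ as $A_\rho\cdot F[T]^W$, one has $\Phi = A_\rho\cdot f$ for some $W$-invariant rational function $f$ on $T$.

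The remaining task is to show that $f$ is a nonzero constant. Substituting the Weyl character formula $a_i = A_{\mu_i+\rho}/A_\rho$ and invoking the product rule (the fact $dA_\rho\wedge dA_\rho = 0$ kills all but the leading term of the Leibniz expansion of $\bigwedge d(A_{\mu_i+\rho}/A_\rho)$) reduces $\bigwedge da_i$ to an expression involving $\det(\partial A_{\mu_i + \rho}/\partial \log\mu_j)$. Writing $\partial e^{w\lambda}/\partial\log\mu_j = \langle w\lambda,\alpha_j^\vee\rangle\, e^{w\lambda}$ and expanding the determinant over $W$ via Cauchy--Binet, repeated $W$-alternation collapses the multi-index sum, leaving $\pm |W_T|\cdot A_\rho^{\,r+1}$ times the integer determinant $\det(\langle\mu_i+\rho,\alpha_j^\vee\rangle)_{i,j}$ -- which is an explicit constant. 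Hence $f$ is constant.

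The main obstacle is the final combinatorial step, where sign-tracking through the $W$-indexed Cauchy--Binet expansion is delicate. A more conceptual route bypasses this combinatorics via the exponential map: near the identity, $A_\rho(\exp X)\approx \prod_{\alpha>0}\alpha(X)$, and the group Jacobian degenerates to the Lie-algebra Jacobian from (\ref{eq:root_discr}), forcing the scalar by continuity. A sanity check for $\bG=\Sl_2$: $a_1=x+x^{-1}$, so $\Phi = x - x^{-1} = A_\rho$ and $|A_\rho| = |x - x^{-1}| = |\Delta(\gamma)|$, matching (\ref{eq:disc_group}).
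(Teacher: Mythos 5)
You take a genuinely different route from the paper. The paper's proof is short because it delegates the hard part: for the split torus, the identity $(\fc|_{T^\spl})^\ast\omega_{\A_G}=\pm A_\rho\,\omega_{T^\spl}$ (where $A_\rho$ is the Weyl denominator) is simply \emph{cited} from Bourbaki and from the proof of Proposition 3.29 in \cite{langlands-frenkel-ngo}; the paper's own contribution is the reduction from a general maximal torus $T$ to the split one, by factoring $\fc|_T$ as $\bT\to\bT^\spl\to\A_G$ over $\bar F$ and observing that $\omega_T$ pulls back from $\omega_{T^\spl}$. You instead try to \emph{prove} the split-torus identity from scratch, via the description of $W$-anti-invariants. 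That framework is correct and conceptually appealing: $\Phi:=(\fc|_T)^\ast\omega_{\A_G}/\omega_T$ is indeed a $W$-anti-invariant Laurent polynomial (the ratio of a $W$-invariant form by a $W$-sign-twisted form), the Steinberg/Bourbaki theorem does give $\Phi=A_\rho\cdot f$ with $f\in F[T]^W$, and $|A_\rho(\gamma)|=|\Delta(\gamma)|$ checks out against (\ref{eq:disc_group}). Your $\SL_2$ sanity check is also correct. It is worth noting that your anti-invariant argument, carried out over $\bar F$, also implicitly handles non-split $T$, so it covers both halves of the paper's proof at once if completed.

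However, there is a genuine gap exactly where the paper says the work lies (``the constant turns out to be $\pm1$ $\ldots$ this is not trivial''): you do not actually show that $f$ is constant. Your first route (Weyl character formula plus Cauchy--Binet) is gestured at but not carried out; the claim that $dA_\rho\wedge dA_\rho=0$ ``kills all but the leading term'' of $\bigwedge d(A_{\mu_i+\rho}/A_\rho)$ is wrong as stated -- there remain $r$ cross terms each containing exactly one $dA_\rho$ factor, and it is not obvious that they cancel. Your second route (the exponential map near $e$) does not close the gap: comparing $\Phi(\exp X)$ and $A_\rho(\exp X)$ to leading order as $X\to 0$ only shows $f(e)=1$, and a nonconstant $W$-invariant Laurent polynomial (e.g.\ $1+a_1-\dim\rho_1$) can also equal $1$ at the identity, so ``forcing the scalar by continuity'' is not a valid deduction. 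A clean way to close the gap is a divisor argument: $\Phi$ vanishes exactly where $\fc|_T$ ramifies, namely to order one along each wall $\{\alpha=1\}$ for $\alpha\in\Phi^+$, which is precisely the zero divisor of $A_\rho$; hence $f=\Phi/A_\rho$ is a nowhere-vanishing $W$-invariant Laurent polynomial, therefore a constant, and comparing top weights (both $\Phi$ and $A_\rho$ have leading term $e^\rho$ with coefficient $\det(\langle\mu_i,\alpha_j^\vee\rangle)=1$) pins the constant to $\pm 1$. Alternatively, just cite Bourbaki as the paper does.
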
 
\begin{proof} For $T= T^\spl$, this is proved in \cite{langlands-frenkel-ngo}, as well as in \cite{Bourbaki:Lie} (where the field is assumed algebraically closed, but the proof works verbatim for the split torus). 
Now it remains to consider the restriction of $\fc$ to an arbitrary (not necessarily split) maximal torus. 
The map $\fc$ on $T$ can be defined as a composition 
$$\bT\to \bT^\spl\to \A_G=\bT^\spl/W,$$ where the first map is an isomorphism over the algebraic closure of $F$. 
The pullback of the form $\omega_{T^\spl}$ on $T^\spl$ is precisely the form $\omega_T$ on $T$, and thus the equality remains true.
\end{proof}
 The theorem follows, precisely as in the Lie algebra case. 
To conclude this section, we compute some examples illustrating the above Lemma (which show that it is substantially non-trivial even for the split torus).  

\subsubsection{Examples of Jacobians and discriminants on the group}

\begin{example} We again start with $\bG=\Sl_2$. 
Let $\gamma_t= \left[\begin{smallmatrix} t & 0\\ 0& t^{-1} \end{smallmatrix}\right]$. 
The map $\fc$ on the diagonal torus is given by: 
$\fc:\left[\begin{smallmatrix} t & 0\\ 0& t^{-1} \end{smallmatrix}\right] \mapsto t+t^{-1}$. 
Its Jacobian (i.e., the derivative) is $1-t^{-2}$; so we get: 
$$
\Jac(\fc\vert_{T^{\spl}})(\gamma_t)=
1-t^{-2}=(1-(-\alpha)(\gamma_t))=t^{-2}(1-\alpha(\gamma_t)).
$$
We observe that for $\SL_2$,  the half-sum of positive roots is $\rho=\frac12\alpha$, so 
$\rho(\gamma_t)= t$.

However, this is not yet the whole story. We are interested in the ratio between the measure $\omega_T=\frac{dt}t$ on $T$ and the measure $da$ on $\A^1\simeq T/W$, and our map, as above, is given by $a=t+t^{-1}$. 
We just computed: $da=(1-t^{-2})dt$. Then we have: 
$$\frac{dt}{t}=\frac{(1-t^{-2})^{-1}}{t} da = \rho(\gamma_t) \prod_{\alpha>0} (1-\alpha(\gamma_t))^{-1} da.$$ 
\end{example} 

It is instructive to do one more, higher rank, example. 
\begin{example} Let $\bG=\Sp_4$ (defined explicitly as in Example \ref{ex:Weyl_disc} above), and consider the split torus $T=\{\diag(t_1, t_2, t_1^{-1}, t_2^{-1})\mid t_i\in F^\times \}$.

Let $\gamma_{t_1, t_2}= \diag(t_1, t_2, t_1^{-1}, t_2^{-1})$. 
In these coordinates, Steinberg map is given explicitly by  the elementary symmetric polynomials: 
$$
\begin{aligned}
&\fc: \gamma_{t_1, t_2} \mapsto (a,b), \\
& a=t_1+t_2+t_1^{-1}+t_2^{-1}, \quad b=t_1t_2+t_2t_1^{-1}+t_1t_2^{-1}+t_1^{-1}t_2^{-1}+2. 
\end{aligned}
$$
\end{example}
The Jacobian of this map is (we are skipping the details of a painful calculation)
\begin{equation*}
\left|\begin{smallmatrix} \frac{\partial a}{\partial t_1} & \frac{\partial b}{\partial t_1}\\
\frac{\partial a}{\partial t_2} & \frac{\partial b}{\partial t_2} 
\end{smallmatrix} \right| = 
(1-t_1^{-2})(1-t_2^{-2})(1-t_1^{-1}t_2^{-1})(t_1-t_2),
\end{equation*} 
which we recognize as:
$$\Jac (\fc\vert_T)(\gamma_{t_1, t_2})= t_1\prod_{\alpha<0}(1-\alpha(\gamma_{t_1, t_2})),$$
Note the factor of $t_1$ in front (which is not a root value). 
Thus we obtained: 
$$\begin{aligned} 
&dt_1\wedge dt_2 = \pm \prod_{\alpha<0} (1-\alpha(\gamma_{t_1,t_2}))^{-1} t_1^{-1} da\wedge db \\
&= \pm\left( \prod_{\alpha>0} (1-\alpha(\gamma_{t_1, t_2}))^{-1} \right)\rho^2(\gamma_{t_1, t_2}) t_1^{-1} da\wedge db.
\end{aligned}$$
The plus-minus sign in front is not important and depends on the ordering of the coordinates. 

Now, we are interested in the ratio between the invariant measure $\omega_T= \frac{dt_1\wedge dt_2}{t_1 t_2}$ on $T$ and the measure $d \omega_{\A_G}=da\wedge db$ on $T/W$. 
We note that in this case $\rho$, the half-sum of positive roots, evaluated at $\gamma_{t_1, t_2}$  is 
$\rho(\gamma_{t_1, t_2})= \left(t_1^2 t_2^2 \frac{t_1}{t_2} (t_1 t_2)\right)^{1/2} = t_1^2 t_2$, and compute further (here we write $\gamma:=\gamma_{t_1, t_2}$ to avoid notational clutter): 
\begin{equation*}
\begin{aligned}
\omega_T & = \frac{dt_1\wedge dt_2}{t_1 t_2} = \frac{\pm\left( \prod_{\alpha>0} (1-\alpha(\gamma))^{-1} \right)\rho^2(\gamma) t_1^{-1}}{t_1 t_2} da\wedge db \\
 & = \pm\left( \prod_{\alpha>0} (1-\alpha(\gamma)^{-1} \right)\rho(\gamma)  da\wedge db =\Delta(\gamma)  da\wedge db. 
\end{aligned}
\end{equation*}

\subsection{The general case}
First, suppose that $\bG$ is a connected  split reductive group over $F$, with $\bG^\der$ simply connected. 
Then if one uses the correct general notion of Steinberg-Hitchin base as defined in \cite{langlands-frenkel-ngo}, all measures are invariant under the action of the centre, and hence relation (3.31) of \cite{langlands-frenkel-ngo} holds in this case as well, with no further proof needed. 

If $\bG^\der$ is not simply connected, the space we denoted by $\A_{G^\der}$ is no longer an affine space, and one needs to use $z$-extensions. 
If $\bG$ is not split, we need to consider Galois action on Steinberg-Hitchin base. Both topics are discussed in \cite{langlands-frenkel-ngo} but are beyond the scope of these notes.

\subsection{Aside: na\"ive approach for classical groups -- what works and what doesn't}\label{sub:naive_meas}
 Suppose for a moment that $G\hookrightarrow \GL_n(F)$ 
is a split classical reductive group. It is tempting (and often done in Number theory\footnote{For example, \cite{gekeler03}, \cite{achterwilliams15}, \cite{davidetal15}, \cite{achter-altug-garcia-gordon}, etc. 
A reader not interested in this type of a calculation can safely skip this section.})
to  
still try to  use the coefficients of the characteristic polynomial to define the  maps from $\fg$ and $G$ to $\A_G$.
This works (with further caution discussed below) for the groups of type  $A_n$, $C_n$ and $B_n$, but does not quite work for type $D_n$.

First, consider Chevalley map on the Lie algebra. 
 
If $\fg =\mathfrak{sp}_{2n}$, then the characteristic polynomial of any element 
$X\in \fg$ has the form:
$f_X(t)=t^{2n}+a_1 t^{2n-2}+\dots +a_{n}$. 
 We can define $\fc'_{\fsp}(X)$ to be the tuple of coefficients 
$(a_1, \dots, a_{n})\in \A^{n}$.  
The relationship between this map and Chevalley map is determined by the relation between the fundamental representation 
$\rho_i$ and the $i$-th exterior power of the standard representation $\wedge^i \rho_1$, for $i=1,\dots, n$. 
For the symplectic Lie algebra, it turns out that $\wedge^i \rho_1$ is a direct sum of $\rho_i$ and some representations of lower 
highest weights (see e.g. \cite[Theorem 17.5]{fulton-harris:RepresentationTheory}). Hence, the transition matrix between the characters of 
$\rho_i$ and the characters of $\wedge^i \rho_1$ (i.e., the coefficients of the characteristic polynomial up to sign) is upper-triangular with $1$s on the  
diagonal.  Therefore, we get a measure-preserving isomorphism between the affine space $\A_G$ with coordinates $\tr(\rho_i)$ and the affine space $\A_G'$ with the coordinates given by the coefficients of the characteristic polynomial. 
This implies that the map $\fc'_{\fsp}$ could be used instead of $\fc_{\fsp}$ in all the calculations, without affecting the results. 

For the odd orthogonal Lie algebras $\fso_{2n+1}$, the exterior powers $\wedge^i\rho_1$ are irreducible for $1=1, \dots, n$, and for $i=1, \dots, n-1$, coincide with the first $n-1$ fundamental representations; however, the last fundamental representation, the \emph{spin representation} is not obtained this way (see \cite[Theorem 19.14]{fulton-harris:RepresentationTheory}). 
For the even orthogonal Lie algebra $\fso_{2n}$, the representations $\wedge^i\rho_1$ are irreducible for $1=1, \dots, n-1$, and for $i=1, \dots, n-2$, coincide with the first $n-2$ fundamental representations, and $\wedge^n\rho_1$ decomposes as a direct sum of two irreducible representations whose weights are \emph{double} the fundamental weights (see \cite[Theorem 19.2]{fulton-harris:RepresentationTheory}).
Nevertheless,  for the odd orthogonal Lie algebras, the coefficients of the characteristic polynomial still distinguish the stable conjugacy classes of regular semisimple elements; for the even orthogonal Lie algebras, one needs to add the \emph{pfaffian}.

Passing to Steinberg map and algebraic groups: for the symplectic group, the coefficients of the characteristic polynomial can still be used without affecting any of the measure conversions, since this group is simply connected,  and an argument similar to the above argument on the Lie algebra applies. For special orthogonal groups, $\A_G$ is not an affine space since $\bG$ is not simply connected;  the coefficients of the characteristic polynomial give a map to an affine space. It seems to be a worthwhile exercise to work out the relationship between these two spaces and their measures; I have not done this calculation. 

Finally, we discuss the cases $\bG=\GL_n$ and  $\bG=\GSp_{2n}$ in some more detail since the latter calculation is needed in \cite{achter-altug-garcia-gordon}. 
For $\GL_n$, we just map $g$ to the coefficients of its characteristic polynomial. 
For $\GSp_{2n}$, we can define $\fc^\charp(g)=(a_1, \dots, a_n, \nu(g))$, where $a_1, \dots a_n$ are the first $n$ non-trivial coefficients of the characteristic polynomial, and $\nu(g)$ is the multiplier (this is ad hoc; one could have used the determinant instead to be consistent with $\GL_n$); the superscript `$\charp$' is to remind us that we are using the characteristic polynomial and distinguish this map from the standard map $\fc$.  
The codomain  of the map $\fc^\charp$ is the space we call $\A_{G}^\charp$ which is $\A^{n-1}\times\G_m$ if $\bG=\GL_n$, and $\A^n\times \G_m$ if 
$\G=\GSp_{2n}$. The restriction of $\fc^\charp$ to 
$\bG^\der$ (if we forget the $\G_m$-component) coincides with the map $\fc'$ constructed above for $\bG^\der$ (which coincides with $\fc$ if $\bG^\der =\SL_n$).

The measure on the base  $\A_G^\charp$ in this  case should be defined as 
 the product of the measures associated with the form 
$da$ on the affine space, and $ds/s$ on $\G_m$, where we  denote the coordinates on $\A^{k}\times \G_m$ by $(a,s)$ (with $k=n-1$ or $k=n$). 
With this definition, the resulting measure is, essentially, independent of the specific map used for the last coordinate 
(for example, in the case of $\GSp_{2n}$, if the  determinant instead of the multiplier were mapped to $s$, the measure would just change by the factor $|n|_F$, which is $1$ unless the residue characteristic of $F$ divides $n$; but this caveat is the reason  we prefer to work with the multiplier).

Let $\omega_a^\charp$ be the form on the fibre $(\fc^\charp)^{-1}(a)$ defined the same way as the form $\omega_a^{\geom}$ in (\ref{eq:geom_lie1}) and \S\ref{sub:group}, but using the map $\fc^\charp$ instead: 
\begin{equation}\label{eq:charp}
\omega_a^\charp \wedge (da\wedge \frac{ds}s) = \omega_G.
\end{equation}
As before, the forms $\omega_G$,  $\omega_T$ and $\omega_{G/T}$ are, of course, invariant under the action of the centre of $G$,
 but the centre does not even act on $\A_G^\charp$ as a group. Nevertheless,  multiplication by scalars still makes sense on this space.
 
 To find the relation between the differential forms $\omega_a^\charp$ and $\omega_{T\backslash G}$ on a given orbit, let us work over the algebraic closure of $F$ for a moment.  
Over $\bar F$, every element $\gamma\in \bG(\bar F)$ can be written as $\eta_z\gamma'$ with $\eta_z\in \bZ(\bar F)$ and $\gamma'\in \bG^\der(\bar F)$ (defined up to an element of ${\mathbf A}(\bar F)$; we just pick one such pair). 
For $\eta_z:=z{Id}\in Z$ with $z\in \bar F$,  
and $g\in \bG^\der(\bar F)$, each coefficient $a_i(\eta_z g)$ of the characteristic polynomial of $\eta_zg$ differs from $a_i(g)$ by a power of $z$. Then the form  $\omega_a^\charp$  would have  to scale by the power of $z$ as well, to preserve (\ref{eq:charp}). 
We denote by $O_\gamma^\charp$ the orbital integral of $\gamma$ with respect to the form $\omega_{\fc^\charp(\gamma)}^\charp$, as a distribution on $C_c^\infty(G)$. We compute explicitly the relation between this integral and the integral with respect to 
$\omega_{T\backslash G}$ for $\GSp_{2n}$. 

\begin{example} 
$\bG=\GSp_{2n}$. 
In this case the scaling factor is $|z|^{S}$, where $S$ is the sum of the degrees (as homogeneous polynomials in the roots) of the first $n$ coefficients of the 
characteristic polynomial, i.e., $1+2+\dots + n =\frac{n(n+1)}2$.  If $\gamma=\eta_z\gamma'$ with $\det(\gamma')=1$ 
(and $\eta_z\in \bZ(\bar F)$), then 
$|z|=|\det(\gamma)|^{1/2n}$.
We obtain, for $\gamma\in \GSp_{2n}(F)^\rss$:
$$
O^\charp_{\gamma}(f)= 
|D(\gamma)|^{1/2} |\det(\gamma)|^{-\frac{(n+1)}{4}}\int_{G/T}f(g^{-1}\gamma g) \omega_{G/T}.$$
\end{example}

\subsection{Summary} To summarize, so far the following choices have been made 
(we use the same notation as in \cite{langlands-frenkel-ngo} whenever possible): 
\begin{itemize} 
\item The measure $|dx|$  on $F$, such that the volume of $\ri_F$ is $1$. 
If we are working over a global field $K$, 
and $F=K_v$ is its completion at a finite place $v$, 
this measure differs from \cite{langlands-frenkel-ngo} for a finite number of places $v$.
For orbital integrals, this discrepancy gives rise to the factor $|\Delta_{K/\Q}|_v^{\dim(G)-\rank(G)}$ (independent of the element) at each place $v$. 
\item An invariant differential form $\omega_G$ on $\bG$ -- appears on the both sides and does not affect the ratio between measures. 
\item For an algebraic torus $\bT$, a choice of $\{\chi_1, \dots, \chi_n\}$ - a $\Z$-basis of $X^\ast(\bT)$. 
This choice does not affect anything.
\end{itemize}

Given $\gamma\in G$ - a regular semisimple element, with $T=C_G(\gamma)$, 
the following differential forms and measures have been constructed from these choices: 
\begin{itemize}
\item $\omega_T=d\chi_1\wedge\dots \wedge d\chi_r$; 
\item $\omega_{T\backslash G}$  (which should be thought of as a measure on the orbit of $\gamma$, with $T=C_G(\gamma)$) satisfying $\omega_G=\omega_T\wedge \omega_{T\backslash G}$. (Note that the centralizers of stably conjugate elements are isomorphic as algebraic tori over $F$, so one can also think of $\omega_{T\backslash G}$ as a form on the \emph{stable} orbit of $\gamma$.) 
\item $\omega_a$, also on the stable orbit of $\gamma$, with $a=\fc(\gamma)$, satisfying $\omega_{\A_G}\wedge \omega_a=\omega_G$. 
\item In \cite{langlands-frenkel-ngo}, there is a renormalized measure 
$d\bar g_v:= \frac{L(1, \sigma_G)}{L(1, \sigma_T)} \omega_{T\backslash G}$. 
\end{itemize}

Recall the notation $D(\gamma)$ for the Weyl discriminant of $\gamma\in G^\rss$, $|\Delta(\gamma)|=|D(\gamma)|^{1/2}$, as well as the definition of Artin $L$-factor, (\ref{eq:L}).  
The following relations between these measures have been established: 
\begin{itemize}
\item For $\gamma\in G^\rss$, and $a=\fc(\gamma)$, 
$\omega_a=|\Delta(\gamma)|\omega_{T\backslash G}$. 
\item Consequently, for the measure $d\bar g_v$ defined in \cite[\S3.4 below (3.17)]{langlands-frenkel-ngo} we have: 
$\omega_a = |\Delta(\gamma)| {L(1, \sigma_{G/T})}d\bar g_v$, %
where the representation $\sigma_{T\backslash G}$ of the Galois group is, by definition, the quotient of the representation 
$\sigma_T$ on $X^\ast(\bT)$ by the subrepresentation $\sigma_G$ on the characters of $G$ \footnote{
the rank of this subrepresentation is the same as the rank of the centre of $G$; if  $G$ is a semisimple group, we have 
$\sigma_{T\backslash G} =\sigma_T$.}, and hence 
$\left(\frac{L(1, \sigma_G)}{L(1, \sigma_T)}\right)^{-1}$ is precisely $L(1,  \sigma_{T\backslash G})$.

\end{itemize} 
This establishes the identity (3.31) in \cite{langlands-frenkel-ngo} (we note that $\mathrm{Orb}(f)$ is defined in \emph{loc.cit.} as the integral with respect to the measure $d\bar g_v$ on the orbit). Now we move on to the discussion of the factor $L(1, \sigma_{T\backslash G})$ and the relationship with the \emph{canonical measures} in the sense of Gross.

\section{Orbital integrals: from differential forms to `canonical measures'}\label{sec:canon}

So far, we have worked with measures coming from differential forms, as summarized above.
However, in many parts of the literature the so-called \emph{canonical} measures are used. They are defined by means of defining a \emph{canonical} subgroup, and then normalizing the measure so that the volume of this subgroup is $1$. 
This introduces the following factors: 
\begin{itemize} 
\item By definition of the canonical measure, for a torus $\bT$,
$$\mu_T^{\can}=\frac1{\vol_{\omega^{\can}}(\cT^0)}|\omega^{\can}|, $$
where $\omega^{\can}$ is the so-called \emph{canonical} invariant volume form (discussed briefly in \S\ref{subsub:Neron} above; the details of the definition are not important here).

By Theorem \ref{thm:torus} above, 
$$\vol_{\omega^{\can}}(\cT^0)= L(1, \sigma_T)^{-1}.
$$ 
   
Hence, on $\bT$,  we have 
\begin{equation}\label{eq:can}
\omega^{\can}=\frac{\vol_{\omega^{\can}}(\cT^0)}{\vol_{\omega_T}(\cT^0)}\omega_T.
\end{equation} 

\item Recall that $\omega_G=\omega_G^{\can}$ since we are assuming $\bG$ is split; this is also true more generally for $\bG$ unramified,
(and in any case, the choice of the form on $\bG$ matters much less than the choice of the form on $\bT$, as discussed above). 
Therefore, on the orbit of $\gamma$, we have: 
$$
\omega^{\can}_{T\backslash G} = 
\frac{\vol_{\omega_T}(\cT^0)}{\vol_{\omega^{\can}}(\cT^0)}\omega_{T\backslash G},$$
\text{  and   }
\begin{equation}\label{eq:quot}
\begin{aligned}
&\mu^{\can}_{T\backslash G} = \frac{\mu_G^\can}{\mu_T^\can} =
 \frac{{\frac1{\vol_{\omega_G}(G^0)}}{|\omega_G|}}{{\frac1{\vol_{\omega^\can}(\cT^0)}}{|\omega^\can|}}
=\frac{\vol_{\omega^\can}(\cT^0)}{\vol_{\omega_G}(G^0)} \frac{|\omega_G|}{|\omega^\can|}\\
&=\frac{\vol_{\omega^\can}(\cT^0)}{\vol_{\omega_G}(G^0)}
\frac{|\omega_G|}{\frac{\vol_{\omega^\can}(\cT^0)}{\vol_{\omega_T}(\cT^0)}|\omega_T|}
  =\frac{\vol_{\omega_T}(\cT^0)}{\vol_{\omega_G}(G^0)}\frac{|\omega_G|}{|\omega_T|} =  
\frac{\vol_{\omega_T}(\cT^0)}{\vol_{\omega_G}(G^0)}|\omega_{T\backslash G}|.
\end{aligned}
\end{equation}

When $\bT$ splits over an unramified extension, by Theorem \ref{thm:torus} above, $\vol_{\omega_T}(\cT^0)=L(1, \sigma_T)^{-1}$. 
Thus at almost all places $v$, the measure
$d\bar g_v$ is related to the canonical measure on the orbit by: 
\begin{equation}\label{eq:dgbar}
d\bar g_v= L(1, \sigma_G)\vol_{\omega_G}(G^0)\mu^{\can}_{T\backslash G}.
\end{equation}

Combining (\ref{eq:quot})  with the relation $\omega_a=\pm \Delta(\gamma) \omega_{T\backslash G}$, we also obtain the relation between the geometric measure and canonical measure (for all places): 
\begin{equation}\label{eq:canon_to_geom}
|\omega_a| = |\Delta(\gamma)|  \frac{1}{\vol_{\omega_T}(\cT^0)} \mu^{\can}_{T\backslash G},
\end{equation}
where the factor $\vol_{\omega_G}(G^0)$ disappears since the same measure on $\bG$ needs to be used on both sides when defining $\omega_a$. 
We recall that the factor  ${\vol_{\omega_T}(\cT^0)}$ is discussed above in \S \ref{subsub:general}.

\end{itemize}

\subsection{Example: $\GL_2$}\label{sub:GL2}
For $\bG=\GL_2$, we can make everything completely explicit. 
The orbital integrals  of spherical functions with respect to \emph{canonical} measure are computed, for example, 
in \cite[\S 5]{kottwitz:clay}.  We combine this computation with our conversion factors to obtain the integrals with respect to the geometric measure. We observe that the result agrees with the formula (3.6) of  \cite{Langlands:2013aa}. 

In \cite[\S 5]{kottwitz:clay} the orbital integrals are computed using the reduced building (i.e. the tree) for $\GL_2$, and expressed in terms of the integer $d_\gamma$ (for $\gamma\in G(F)^\rss$). 
The number $d_\gamma$ is defined in terms of the valuations of the eigenvalues of $\gamma$, 
see the top of p.415 for the split case, p.417 for the unramified case,  and (5.9.9) for the ramified case.  

In fact, we have 
$$
|D(\gamma)|=\begin{cases}q^{-2d_\gamma}, \quad \gamma \text{ split or unramified},\\
q^{-2d_\gamma-1}, \quad \gamma \text{ ramified}.
\end{cases} 
$$
(This is the definition in the split and unramified cases and an easy exercise in the ramified case.)

Here we only look at the simplest orbital integral of $f_0$,  the characteristic function of the maximal compact subgroup $G_0=\GL_2(\ri_F)$.  
\begin{itemize}
\item If $\gamma$ is split over $F$, then, from formula (5.8.4) \emph{loc.cit.}: 
\begin{equation}\label{eq:split}
O_{\gamma}(f_0)= q^{d_\gamma} = |D(\gamma)|^{-1/2}
\end{equation}

\item If $\gamma$ is elliptic (which in $\GL_2^\rss$, is the same as not split), then 
$O_\gamma(f_0)=|V^\gamma|$, the cardinality of the set of fixed points of the action of $\gamma$ on the building; see formula  (5.9.3).
Note that here the right-hand side does not depend on the choice of the measures on $G$ and  on the centralizer of $\gamma$ (which  we denote by $T=G_\gamma=C_G(\gamma)$ to consolidate notation with this part of \cite{kottwitz:clay}). Thus, there is a unique choice of measures for which this equality is true. This equality is explained in \S 3.4 of \emph{loc.cit.}; see also the explanation just above (5.9.1). 
In fact, for elliptic $\gamma$, one has 
$$\vol(Z\backslash G_\gamma)O_\gamma(f_0)=|V^\gamma|,$$
where on the left the volume and the orbital integral are taken with respect to the same choice of the measure on $G_\gamma$, and the measure on $G$ that gives $G_0$ volume $1$ (note that in this formula both sides are independent of the choice of the measure on $G_\gamma$). 
Thus the measure on $G_\gamma$ that makes (5.9.3) work is precisely the measure such that $\vol(Z\backslash G_\gamma)=1$. 

Plugging in the calculations of $|V^\gamma|$ from \emph{loc.cit.}, in the two remaining cases we obtain: 

\item If $\gamma$ is unramified (5.9.7):
\begin{equation}\label{eq:unram}
O_{\gamma}(f_0)= 1+(q+1)\frac{q^{d_\gamma}-1}{q-1}.
\end{equation}

\item If $\gamma$ is ramified (5.9.10):
\begin{equation}\label{eq:ram}
O_{\gamma}(f_0)= 2\frac{q^{d_\gamma+1}-1}{q-1}.
\end{equation}
\end{itemize}

Assume as usual that $p\neq 2$. 
Suppose we started with the measure on $G_\gamma$ that gave volume $1$ to its maximal compact subgroup, and the measure on $Z$ such that the volume of $Z\cap G_0$ is $1$.  
In the unramified case, the map from $T^c$ to $Z\backslash T$ is surjective, and this choice of measures gives the quotient $Z\backslash T$ volume $1$. 
In the ramified case, the image of  $T^c$ in  $Z\backslash T$ has index $2$, and thus the volume of $Z\backslash T$ we get from this natural measure on $T$ is not $1$ but $2$. 
The upshot is that in the ramified case, the measure giving the volume $1$ to $Z\backslash G_\gamma$ does \emph{not} come from a natural measure on $G_\gamma$.

Thus, combining the relation (\ref{eq:canon_to_geom}) with (\ref{eq:split}), (\ref{eq:unram}), and (\ref{eq:ram}), respectively, and using (\ref{eq:res_quadr}), which applies since for all tori in $\GL_2$, $T^c=\cT^0$,  we obtain the integrals with respect to the geometric measure:
\begin{equation}\label{eq:oigl2}
\begin{aligned}
& O_\gamma^\geom(f_0)  =  \frac{|D(\gamma)|^{1/2}}{\vol_{\omega_T}(T^c)}  O_{\gamma}(f_0)\\
& =\begin{cases} &(1-\frac{1}{q})^{-2} \quad \gamma \text{ is hyperbolic} \\
&\frac{q^2}{(q^2-1)}\left(1+(q+1)\frac{q^{d_\gamma}-1}{q-1}\right) q^{-d_\gamma} \quad \gamma  \text{ is unramified elliptic}\\
&\frac{q\sqrt{q}}{q-1} \left(\frac{q^{d_\gamma+1}-1}{q-1}\right) q^{-d_\gamma-1/2} \quad \gamma \text{ is ramified elliptic}
\end{cases}\\
& = \left(1-\frac1q\right)^{-2} \begin{cases} & 1 \quad \gamma \text{ is hyperbolic} \\
& 1 - \frac{2}{q+1}q^{-d_\gamma} \quad \gamma  \text{ is unramified elliptic}\\
&1 -  q^{-(d_\gamma+1)} \quad \gamma \text{ is ramified elliptic}
\end{cases}
\end{aligned}
\end{equation}

These formulas agree with \cite[(2.2.10)]{Langlands:2013aa}, if one multiplies our results by 
$\vol_{\omega_G}(G_0)= (1-\frac1q)^2(1+\frac 1q)$, as required by the relation (\ref{eq:dgbar}).

\subsection{The next step} In \cite{Langlands:2013aa}, Langlands works out Poisson summation on the geometric side of the stable Trace Formula for $\SL_2$. 
Roughly speaking, Poisson summation formula is a relation between the sum of the values of a smooth function over a lattice in a vector space, and the sum of the values of its Fourier transform over a dual lattice. 
Here the space is the set of ad\`elic points of the Steinberg-Hitchin base for $\SL_2$, which is just the affine line. 
The lattice in it is the image of the diagonal embedding of the base field (we can take $\Q$ for simplicity). 
The geometric side of the Trace Formula contains a sum over the conjugacy classes of elliptic elements $\gamma\in \SL_2(\Q)$, which corresponds to a sum over $\Q$ in the Steinberg-Hitchin base. Thus at least for the elliptic  part, it is tempting to take the function to be a stable orbital integral (i.e., the integral of some fixed test function over a fibre of Steinberg map $\fc^{-1}(a)$, as a function of $a$), and apply Poisson summation. 
However, for that the function needs to satisfy some  smoothness assumption. 
Now we can at least make some preliminary remarks about how far our function is from being smooth, at least at every finite place. 
 
If we take $\gamma\in \SL_2(\Q_p)$, the $\GL_2(\Q_p)$-orbital integral computed above is the \emph{stable} orbital integral of $\gamma$.  
All along we have been assuming that $\gamma$ is a regular semisimple element. It is  well-known that the singularities of orbital integrals occur only at non-regular elements (and we will see this explicitly in a moment, in this example). 
More precisely, it is a result of Harish-Chandra that for a given test function $f$, when a measure of the form 
$|\omega_{G/T}|$ is used on each regular semisimple orbit, the orbital integral $\gamma\mapsto O_\gamma(f)$ is
a smooth (i.e., locally constant) function on the open set $G^\rss$ of regular semisimple elements. 
This function is not bounded as $\gamma$ approaches a non-regular element; however, its growth is controlled by $|D(\gamma)|^{-1/2}$.   
Specifically, Harish-Chandra proved that (still with $f$ fixed), the so-called \emph{normalized orbital integral}, namely, the function $
\gamma\mapsto |D(\gamma)|^{1/2}O_\gamma(f)$ is bounded on compact subsets of $G$, and 
locally integrable on $G$. We note that since $D(\gamma)$ vanishes at non-regular elements, this normalized orbital integral is also zero off the regular set. Thus, the normalized orbital integral, as a function of $\gamma$ (for a given test function $f$), is locally constant on $G^\rss$, zero on non-regular semisimple elements, and locally bounded on $G$. However, this does not imply that 
it is continuous on $G$. Indeed, while it is locally constant on the set of regular semisimple elements, as $\gamma$ approaches a non-regular element, the neighbourhoods of constancy get smaller; at a non-regular element $\gamma_0$ itself this function is zero since $D(\gamma_0)=0$; by Harish-Chandra's theorem this function is bounded on any compact neighbourhood of $\gamma_0$; however, nothing says that it is continuous at $\gamma_0$: without a careful choice of measures, it will have ``jumps''. 
As we shall see in our $\SL_2$-example, the extension of the normalized orbital integral by zero to non-regular elements does not actually give a continuous function on $G$; however, when the geometric measure is chosen, one gets a function on the Steinberg-Hitchin base with just a removable discontinuity.

In $\SL_2$, we have just two non-regular semisimple 
elements, namely, $\pm {\mathrm{Id}}$. Their images under Steinberg map $\fc_{SL_2}:\SL_2 \to \A^1$ are $\pm 2$. 
Fix $p$ (for now, $p\neq 2$) and consider, for example, a neighbourhood of the point $2\in \A_{\SL_2}(\Q_p)=\A^1(\Q_p)$. 
It consists of the images of split, ramified, and unramified elements with sufficiently large $d_\gamma$ (the split/ramified/unramifed is determined by the discriminant of the characteristic polynomial of a given element, as discussed above in Example \ref{ex:gl2}).    
The formula (\ref{eq:oigl2}) shows that as $d_\gamma\to \infty$ (i.e, as $\gamma$ approaches $\pm \mathrm{Id}$), the stable orbital integral of $f_0$ on the orbit of  
$\gamma$ with respect to the geometric measure gives a   \emph{continuous} function on $\A^1$, with value $(1-q^{-1})^{-2}$ at $a=2\in \A_{\SL_2}(\Q_p)$. (This, of course, cannot be said about the orbital integrals with respect to the canonical measure, as they get large - of the size $q^{d_\gamma}$; as $|D(\gamma)|^{1/2} \asymp q^{-d_\gamma}$, we see the confirmation of Harish-Chandra's boundedness result; but still the function $\gamma\mapsto 
|D(\gamma)|^{1/2}O^\can(\gamma)$ has ``jumps'' at $\pm Id$; once we make the adjustments by the volumes of the maximal compact subgroups of the corresponding tori, it becomes continuous). This continuity result is one of the insights of \cite{Langlands:2013aa}. 
However, as we see explicitly from (\ref{eq:oigl2}), this function is continuous but not smooth (i.e. not constant on any neighbourhood of $a=\pm 2$); and so far this is just the description of the situation at a single place, whereas ultimately we will need a global Poisson summation formula. This causes some of the technical difficulties discussed in Altug's lectures.

\section{Global volumes}\label{sec:global}

\subsection{The analytic class number formula for an imaginary quadratic field}\label{sub:an_cnf}
Here we recast the analytic class number formula for an imaginary quadratic field $K$ as a volume computation, using the above ideas. It was observed by Ono, \cite{ono_tamagawa_tori} (see also \cite{shyr77}), that the analytic class number formula in this case amounts to the fact that the Tamagawa number $\tau(\bT)$ of the torus $\bT=\res_{K/\Q}\G_m$ equals $1$. 
We will assume that $\tau(\bT)=1$ and derive the analytic class number formula for $K$ from this fact. This also serves as preparation for \S\ref{sub:orb_glob} where the same volume term combines with an orbital integral for an interesting result.

The analytic class number formula for a general number field is the relation 
\begin{equation}\label{eq:cnf}
\lim_{s\to 1^+}(s-1)\zeta_K(s) = \frac{2^{r+t}\pi^t R_K h_K}{w_K |\Delta_K|^{1/2}}, 
\end{equation}
where: $\zeta_K$ is the Dedekind zeta-function of $K$, $R_K$ is the regulator (we will not need it in this note so we skip the definition), $\Delta_K$ is the discriminant of $K$, $h_K$ is the class number, $w_K$ is the number of roots of $1$ in $K$, $r$ is 
the number of real embeddings, and $2t$ is the number of complex embeddings of $K$. 

Let us consider an imaginary quadratic field $K=\Q(\sqrt{-D})$  (with $D>0$); denote its ring of integers by $\ri$.  
In this case, we have $r=0$, $t=1$, the regulator $R_K$ is automatically equal to $1$, and 
the left-hand side equals the value at $s=1$ of $\frac{\zeta_K(s)}{\zeta(s)}=L(1, \chi_K)$ -- the value (in the sense of a conditionally convergent product) of the Dirichlet $L$-function $L(1, \chi_K)$ \footnote{This equality is the simplest case of the correspondence between Artin and Hecke $L$-functions.}. Here $\zeta(s)$ is the Riemann zeta-function, and  $\chi_K$ is the Dirichlet character associated with $K$
: 
\begin{equation}\label{eq:dirichlet_char}
\chi_K(p)=\begin{cases}  1 &\quad  p \text{ splits in } K\\
-1 &\quad  p \text{ is inert in } K\\
 0 &\quad  p \text{ ramifies in } K
\end{cases}. 
\end{equation}

Thus, for an imaginary quadratic field $K$ the analytic class number formula reduces to: 
\begin{equation}\label{eq:quadr_class}
L(1, \chi_K)=\frac{2\pi h_K}{w_K\sqrt{\Delta_K}} .
\end{equation}
Our goal is to prove this relation by using only the known facts about algebraic groups and the measure conversions discussed above. 
The algebraic group in question here is just the torus $\bT=\res_{K/\Q}\G_m$. 

Let $\A_K$ be the ad\`eles\footnote{There is an unfortunate clash of standard notation: we used $\A_G$ to denote Steinberg quotient of $\bG$; hopefully this causes no confusion. Another notation clash is $\mu_K$ for the group of roots of $1$ in $K$, as we have been using the letter $\mu$ (with subscripts and superscripts) to denote various measures.} of $K$ and let $\A_K^\fin$ be the finite ad\`eles. 
In general $K^\times$ embeds (diagonally) in $\A_K^\times$ with discrete image; for $K$ imaginary quadratic, the image of the embedding
$K^\times \hookrightarrow (\A_K^\fin)^\times$ is still discrete (in fact, this is true only when $K=\Q$ or is imaginary quadratic,  see e.g.,\cite{milne:ClassFieldTheory}).   
We know (weak approximation, see e.g.,  \cite{platonov-rapinchuk:AlgGroupsAndNT}) that for $\Q$, 
$$\Q^\times \big\backslash (\A_{\Q}^\fin)^\times \big/ \prod_p \Z_p^\times  =\{1\}.$$
Since the image of $K^\times$ in $(\A_K^\fin)^\times$ is discrete, we can define a similar double quotient for $\A_K^\fin$: 
$K^\times \big\backslash (\A_{K}^\fin)^\times \big/ \prod_{v\nmid\infty} (\ri_v)^\times$, which, roughly speaking, should measure the size of the class group of $K$. The reason this is not exactly the class group is the intersection of the image of $K^\times$ with the  compact subgroup $\prod_{v\nmid\infty} (\ri_v)^\times$. 
More precisely, we have the exact sequence: 
\begin{equation}\label{eq:exseq_ideles}
\xymatrix{
1 \ar[r]  & (K^\times \cap  \prod_{v\nmid\infty} (\ri_v)^\times )\backslash \prod_{v\nmid\infty} (\ri_v)^\times \ar[r] & K^\times \big\backslash (\A_{K}^\fin)^\times 
 \ar[r]&  \mathrm{Cl}(K) \ar[r] & 1, 
}
\end{equation}
where $\mathrm{Cl}(K)$ is the class group of $K$. 

The group $K^\times \cap  \prod_{v\nmid\infty} (\ri_v)^\times$ is precisely the group $\mu_K$ of roots of $1$ in $K$ (the elements of $K^\times$ that are units at every finite place). 

The key point is that \emph{if we normalize the volume of the group of units $\ri_v^\times$ to be $1$ at every place, and call this measure $\nu_T$}
\footnote{An important coincidence that happens for our torus $T$, because it is obtained from $\G_m$ by restriction of scalars, is that the measure $\nu_T$ coincides with the canonical measure $\mu_T^\can$ at every finite place, as discussed above in 
the point (1) of \S \ref{subsub:general}. See \cite{shyr77} for the general situation.},  then we get from the above exact sequence: 

\begin{equation}\label{eq:Tama1}
\vol_{\nu_T}\left(K^\times \big\backslash (\A_{K}^\fin)^\times \big/ \prod_{v\nmid\infty} (\ri_v)^\times\right) = \frac{h_K}{w_K}.
\end{equation}

We will assume as fact that the Tamagawa number of $\bT$ is $1$ (this is so because  $\bT$ is obtained from $\G_m$ by Weil restriction of scalars, as briefly discussed below). The analytic class number formula will follow as soon as we relate the volume on the left-hand side of (\ref{eq:Tama1}) to the Tamagawa number $\tau(\bT)$.

\subsubsection{Tamagawa measure}\label{subsub:Tama}
We briefly recall the definition of the Tamagawa measure, just for the special case of our torus  $\bT$. We follow the definition of Ono, \cite{ono:boulder}, \cite{ono:arithmetic_tori}, which has become standard. 
\footnote{We note that superficially, it differs from the definition that A. Weil uses in \cite{weil:adeles}, in the sense that Ono uses a specific set of convergence factors, and incorporates a global factor that makes his definition independent of the choice of the convergence factors at finitely many places. The resulting global measure, of course, is the same in the both sources.}   

{\bf 1.} Let $(\A_\Q^\times)^1$ be the set of norm-1 ad\`eles (also referred to as \emph{special ideles}): 
$$(\A_\Q^\times)^1:=\{(x_v)\in \A_\Q: \prod_{v} |(x_v)|_v =1\}, $$
where the product is over all places of $\Q$. 

We  have the exact sequence  
\begin{equation}\label{eq:infty1}
1\to (\A_\Q^\times)^1 \to \A_\Q^\times \to \R_{>0}^\times  \to 1,
\end{equation} 
where the first map is the inclusion and the second map is the product of absolute values over all places, 
$x=(x_v)\mapsto \prod_v |x_v|_v$.
Moreover, the exact sequence splits and we have a canonical decomposition 
\begin{equation}\label{eq:adeles_product}
\A_\Q^\times \simeq (\A_\Q^\times)^1\times (\R^\times)^0,
\end{equation} 
as a direct product of topological groups, where $(\R^\times)^0$ stands for the connected component $R_{>0}^\times$ 
(in the sense of the metric topology) of the group $\R^\times$. 
We note that the image of the diagonal embedding of $\Q^\times$ into $\A_\Q^\times$ is contained in $(\A_\Q^\times)^1$, and 
it follows from (\ref{eq:adeles_product}) that the quotient $\Q^\times \backslash (\A_\Q^\times)^1$ is compact. 

{\bf 2.} To define the Tamagawa measure on $\bT(\Q)\backslash\bT(\A)$, one needs to start with a volume form $\omega$ on $\bT$ defined over $\Q$. 
We note that even writing down such a form concretely is not trivial: the natural form $\omega_T$ defined in \S \ref{sub:tori} is not defined over $\Q$. 
Fortunately, in our special case, the differential form $\omega:=\frac{1}{\sqrt{\Delta_K}}\omega_T$ is defined over $\Q$, see Example \ref{ex:restr} (This easy case can also be verified directly by a calculation similar to that of Example \ref{ex:quadr}).  \footnote{See \cite{gan-gross:haar}, Corollary 3.7, for a way to define such a form in general. In our special case of the quadratic field, it  is an  easy case of the discriminant-conductor formula that the Artin conductor of the motive constructed in \cite{gan-gross:haar} coincides with the discriminant $\Delta_K$, so our definition is a special case of the construction in \cite{gan-gross:haar}.}

Recall the local Artin $L$-factors attached to the representation $\sigma_T$ of $\Gal(K/\Q)$  on $X^\ast(\bT)$, see (\ref{eq:L}), and let 
$L(s, \sigma_T):=\prod_p L_p(s, \sigma_T)$.
Let $r_T$ be the multiplicity of the trivial representation as a sub-representation of $\sigma_T$. In our case, $\sigma_T$ is $2$-dimensional, and $r_T=1$; a copy of the trivial representation in $\sigma_T$ is generated by the norm character, which is stable under the action of the Galois group $\Z/2\Z$.  
Let $$\rho_T:= \lim_{s\to 1+} (s-1)^{r_T}L(s, \sigma_T).$$ 
We see  that in our case, $\rho_T$ coincides with the left-hand-side of (\ref{eq:cnf}).  
The measure $\mu^{Tama}$ on $\bT(\A)$ is defined as: 
\begin{equation}\label{eq:def_tama}
\mu^{Tama}:=\rho_T^{-1}|\omega_\infty|\prod_p L_p(1, \sigma_T) |\omega|_p,
\end{equation}
where $\omega_\infty$  is the form induced by $\omega$ on $\bT(\R)$, in our case. 

We make a note of some subtle features of this definition:
\begin{enumerate} 
\item The definition does not depend on the choice of a volume form (as long as $\omega$ is defined over $\Q$), since any two choices differ by a constant in $\Q$, which does not matter globally thanks to the product formula. 
\item Without the \emph{convergence factors} $L_p(1, \sigma_T)$, the product $\prod_p|\omega|_p$ does not define a measure on $\bT(\A)$, since (as one can easily see in our example) the maximal compact subgroup $\prod_{v\nmid \infty} (\ri_v)^\times$ of $\bT(\A^\fin)$ would have infinite volume with respect to such a `measure', since by (\ref{eq:res_quadr}), it contains the Euler product for the Riemann zeta function at $1$.
There is some choice involved in the definition of the convergence factors (for example, in Weil's definition in \cite{weil:adeles} the convergence factors in this case would be simply $(1-1/p)$, which would be sufficient to achieve convergence of the product measure). 
As Ono explains in \S 3.5 of \cite{ono:arithmetic_tori}, if one modifies the individual convergence factors by any multipliers whose product converges, it does not affect the final result thanks to the global factor $\rho_T^{-1}$. 
\end{enumerate} 
For future use, we define by $\mu_p$ the measure $L_p(1, \sigma_T) |\omega|_p$ on $\bT(\Q_p)$.

{\bf 3.} The Tamagawa number of $\bT$ is, by definition, the volume  (with respect to the Tamagawa measure on the quotient, discussed below), 
of $\bT(\Q)\backslash \bT(\A_\Q)^1$, where 
$$\bT(\A_\Q)^1=\{(x_v)\in \bT(\A_\Q): \prod_v|\chi(x_v)|_v = 1 \text{ for all }\chi\in X^\ast(\bT) \text { that are defined over }\Q \}.$$
The group of $\Q$-characters of $\bT$ has rank 1, and is generated by the norm map.
Thus,
$$\bT(\A_\Q)^1=\{(x_v)\in \bT(\A_\Q): \prod_{v}|N_{K_v/\Q_v}(x_v)|_v = 1\},$$
where the product is over the places of $\Q$. 
We note that $\bT(\A_\Q)=\A_K^\times$, and we have the exact sequence 
\begin{equation}\label{eq:T1}
1\to \bT(\A_\Q)^1 \to \A_K^\times \to \R_{>0}^\times \to 1,
\end{equation}
where the map to $\R_{>0}^\times$  is the product of the local norm maps. 

Let $dm$ (using the notation and terminology of \cite{shyr77}) be the measure on 
$\bT(\A_\Q)^1$ that `matches topologically' in this exact sequence with the measure $\mu^{Tama}$ on $\bT(\A_\Q)=\A_K^\times$ defined above, and the measure $\frac{dt}{t}$ on $\R_{>0}^\times$.
That is, $dm$ is the measure on $\bT(\A_\Q)^1$ such that
$$\mu^{Tama}= dm\wedge \frac{dt}{t}.$$ 
Since $\bT(\Q)=K^\times$ is a discrete subgroup of $\bT(\A_\Q)^1$, the measure $dm$ descends to the quotient by this subgroup, and the volume of $\bT(\Q)\backslash \bT(\A_\Q)^1$ with  respect to this measure is, by definition, the Tamagawa  number $\tau(\bT)$.

We note that the exact sequence (\ref{eq:T1}) splits, and we have a group isomorphism 
$$ K^\times \backslash \A_K^\times \simeq \bT(\Q)\backslash \bT(\A_\Q)^1 \times \R_{>0}^\times.$$ 

\subsubsection{The proof of the analytic class number formula}
Now we are ready to go from (\ref{eq:Tama1}) to the analytic class number formula for $K$; we do it in two steps. 

{\bf Step1. The finite places.} 
Rewriting the relation (\ref{eq:res_quadr})  of Example \ref{ex:quadr} using the notation of this section (and noting that 
for $p\neq 2$, $|\Delta_K|_p=p^{-1}$ if  $p$ ramifies in $K$ and $|\Delta_K|_p=1$ otherwise), we obtain, for $v\vert p$: 
\begin{equation}\label{eq:classnum1}
\vol_{|\omega_T|}(\ri_v^\times)=\left(1-\frac1p\right)L_v(1, \chi_K)^{-1}|\Delta_K|_p^{1/2}.
\end{equation}
We will see below in \S\ref{sub:two} that this relation holds at $p=2$ as well. 
Thus, $\vol_{\mu_p}(\ri_v^\times) =1$, and we see explicitly in this example, that our measure $\mu_p$ coincides with the 
$p$-component of the measure $\nu_T$ used in (\ref{eq:Tama1}) (this is a very special case of \cite{gan-gross:haar}, Corollary 7.3).

{\bf Step 2. The infinite places and putting it together.}
We also have the exact sequence  (where $S^1\subset \C^\times$ is the unit circle)
\begin{equation}\label{eq:infty}
1\to S^1 \to \bT(\A_\Q)^1\to \A_K^\fin  \to 1,
\end{equation} 
where the first map is the inclusion into $\A_K$ that maps $z\in S^1$ to the ad\`ele $(1,z)$ trivial at all the finite places, and the second map is the projection onto the finite ad\`eles. 
Since the image of the diagonal embedding of $K^\times$ intersects the image of $S^1$ in $\bT(\A_\Q)^1$ trivially, 
(\ref{eq:infty}) yields the exact sequence for the quotients by $K^\times$: 
\begin{equation}\label{eq:infty1}
1\to S^1 \to K^\times\backslash \bT(\A_\Q)^1\to K^\times\backslash \A_K^\fin  \to 1.
\end{equation}  

Now we need to carefully find the component at infinity $dm_\infty$  (which is a measure on $S^1$) of the measure $dm$ defined via the exact sequence  (\ref{eq:T1}). First, we choose a convenient basis for the character lattice  of $\bT$, in view of this exact sequence: we use the characters 
$\chi_1(z, \bar z)=z $ and $\eta(z):=z\bar z =|z|^2$. These two characters, which can be thought of as the vectors $(1,0)$ and $(1,1)$ with respect to the `standard' basis of $X^\ast(\bT)$ in Example \ref{ex:quadr}, still form a $\Z$-basis of $X^\ast(\bT)$, and hence we can write $\omega_T=\frac{dz}{z}\wedge \frac{d\eta}{\eta}$. We will also use the coordinates $(z, \eta)$ on $\bT$ for the rest of this calculation.   
We write every element of $\bT(\A)$ as $a=a_f a_\infty$, where $a_f$ has the infinity component $1$ and $a_\infty=(1,(z, \eta))$
has all the components at the finite places equal to $1$.
In this notation, $\bT(\A)^1$ is defined by the condition $|z|^2=|\eta| = \|a_f\|^{-1}$. 
We write 
$$\mu^{Tama}=\mu_{\fin}^{Tama}\mu_{\infty}^{Tama},$$
where $\mu_{\fin}^{Tama} = \rho_T^{-1}\prod_p \mu_p$, and $\mu_\infty^{Tama}=|\omega|$. 
 We recall that by definition, $\omega=\frac{1}{\sqrt{|\Delta_K|}}\omega_T$. Then by the definition of $dm$, we have 
 $$  dm_\infty \wedge \frac{d\eta}{\eta} = \mu^{Tama}_\infty = \frac{1}{\sqrt{|\Delta_K|}}\frac{da_\infty}{a_\infty}\wedge \frac{d{\eta}}{\eta}, 
 $$
 and thus 
 $$dm_\infty = \frac{1}{\sqrt{|\Delta_K|}}\frac{da_{\infty}}{a_{\infty}} = \frac{1}{\sqrt{|\Delta_K|}}\frac{dz}{z}.$$
  We have computed above  in (\ref{eq:S1}) that the form $dz/z$ gives precisely the arc length $d\theta$  on the unit circle. 

Thus, the volume of $S^1$ with respect to $dm_\infty$ is  $\frac{2\pi}{\sqrt{|\Delta_K|}}$. 

Finally, we get from (\ref{eq:classnum1}) and (\ref{eq:Tama1}): 
\begin{equation}\label{eq:cnf_proof}
\begin{aligned}
& 1=\vol^{Tama}(\bT(\Q)\backslash \bT(\A_\Q)^1) = \frac{2\pi}{\sqrt{|\Delta_K|}} \vol^{Tama}(\bT(\Q)\backslash \bT(\A_\Q^\fin)) \\ 
& = \frac{2\pi}{\sqrt{|\Delta_K|}}\rho_T^{-1} \vol_{\nu_T}(K^\times \big\backslash \left(\A_{K}^\fin)^\times\right) 
= \frac{2\pi}{\sqrt{|\Delta_K|}}   L(1, \chi_K)^{-1} \frac{h_K}{w_K}, 
\end{aligned}
\end{equation}
recovering the analytic class number formula for $K$. 
We note that the product $L(1, \chi_K)$  converges only conditionally.


\subsection{What happens at $p=2$}\label{sub:two} 
We do the detailed (and elementary but tedious) analysis of the changes one needs to make at $p=2$ to all of the above calculations 
as they apply to an imaginary  quadratic extension $K$  of $\Q$ as above in order to  completely justify the global calculation above, and also point out interesting geometric differences relevant for a norm-one torus of a quadratic extension\footnote{This section can be skipped if the reader is willing to believe that (\ref{eq:classnum1}) holds at $p=2$ as well, and is not interested in the norm-1 torus (which is not used in the sequel).}.  

We write $K=\Q(\sqrt{d})$, with $d$ a square-free (and in our case, negative) integer $d=-D$. 
Our main reference for the number theory information is  e.g., \cite[\S VI.3]{frohlich-taylor}. 
\subsubsection{Quadratic extension at $p=2$}\label{subsub:2start}
First of all, recall that the ring of integers of $K$ is 
$$\ri_K=\begin{cases}
\Z\left[\frac{\sqrt{d}+1}2 \right], &\quad  d\equiv 1 \mod 4,\\
\Z[\sqrt{d}], &\quad d\equiv 2, 3 \mod 4. 
\end{cases} 
$$
We will need the fact that the ring  $\ri_K$ is generated over $\Z$ by a root of the monic polynomial $X^2-X+\frac{1-d}2$ in the first case, and of $X^2-d$ in the second case. 
Therefore the behaviour of the prime $2$ in $K$ depends on the residue of $d$ modulo $8$. Indeed,  
if $d\equiv 1 \mod 4$, then we need to look at the reduction of the polynomial $X^2-X+\frac{1-d}2$ modulo $2$; we see that it is irreducible 
over $\F_2$ if $\frac{1-d}2$ is odd, and it factors as $x(x-1)$ if $\frac{1-d}2$ is even.
Thus if $d\equiv 5\mod 8$, $K_v$ (where $v\vert 2$) is the unramified quadratic extension of $\Q_2$, while if $d\equiv 1 \mod 8$, the prime $2$ splits in $K$. In the remaining cases, $2$ ramifies: 
if $d\equiv 3, 7 \mod 8$, the relevant polynomial is $X^2-d$, and its reduction $\mod 2$ factors as  $(x-1)^2$; 
if $d\equiv 2, 6 \mod 8$, then the reduction of our polynomial $\mod 2$ is just $x^2$.

\subsubsection{From Dedekind zeta-factor to Dirichlet $L$-factor} While the local $L$-factor at $p=2$ itself looks a bit different from the other primes, the argument relating the local factor of $L(1, \chi)$ to the local factor of the Dedekind zeta-function is the same for all primes (including $2$), when the Dirichlet character associated with $K$ is defined by (\ref{eq:dirichlet_char}). We observe that above, we have just computed the Dirichlet character of $K$ at $2$: 
\begin{equation}\label{eq:char2}
\chi_K(2)=\begin{cases}  1 &\quad  d\equiv 1 \mod 8\\
-1 &\quad  d\equiv 5 \mod 8 \\
 0 &\quad  \text{ otherwise } 
\end{cases}. 
\end{equation}
By definition, the local factor at $p=2$ (as at any other prime) of the Dedekind zeta-function is 
$$\zeta_2(s)=\prod_{\mathfrak p\supset (2)}\frac1{1-{\mathrm N}_{K/\Q}(\mathfrak p)^{-s}},$$ 
where the product is over the prime ideals of $\ri_K$ lying over $2$.
It remains to recall that in all cases, for $\mathfrak{p}$ lying over $(p)$, $N_{K/\Q}(\mathfrak p)=p^f$, where $f$ is the residue degree, 
see e.g. \cite[II.4]{frohlich-taylor}.
 
\subsubsection{Quadratic extensions and norm-$1$ tori} If $F$ is a local field with residue characteristic $2$, then 
$|F^\times/(F^\times)^2|=8$, and $F$ has one unramified and 6 ramified quadratic extensions. 
(Indeed, we recall that $F^\times \simeq \Z\times \ri_F^\times$, and for a $2$-adic field, $|\ri_F^\times/(\ri_F^\times)^2|=4$).   
In the case $F=\Q_2$ everything can again be computed in an elementary way, and this is what we do in this section. 
We can list the extensions explicitly using the discussion from \S\ref{subsub:2start}: 
$\Q_2(\sqrt{5})$ is the unramified extension; $\Q_2(\sqrt{3})$ and $\Q_2(\sqrt{7})$ are the ramified extensions coming from the non-square units, and 
$\Q_2(\sqrt{2})$, $\Q_2(\sqrt{10})$, and $\Q_2(\sqrt{6})$ and $\Q_2(\sqrt{14})$ are the ramified extensions corresponding to the elements $\varpi\epsilon$ where $\varpi=2$ and $\epsilon$ is a non-square unit. 
We compute the volumes of $\bT(\Q_2)^c$ with respect to the form $\omega_T$ as in \S\ref{sub:tori}, for the tori  $\bT=\res_{K/\Q}\G_m$ as well as for the norm-1 tori of the corresponding extensions, for $K= \Q(\sqrt{d})$, with $d\equiv 5,3,2 \mod 8$, to compare the calculation  
with Examples \ref{ex:quadr} and \ref{ex:norm1}.  

We recall that the identification $E^\times = (\res_{E/F}\G_m)(F)$ is completely general; and for a quadratic extension $E$, when we think of $(\res_{E/F}\G_m)(F)$ as a torus in $\GL_2(F)$, the determinant in $\GL_2$ corresponds to the norm map $N_{E/F}$ on $E^\times$ under this identification. 
Till the end of the section, we keep the notation $\bT=\res_{E/\Q_2}\G_m$ (we are now looking locally at $p=2$, so to match the notation of 
\S\ref{sub:tori}, we have $F=\Q_2$ and $E$ is the completion of $K$ at $p=2$). 
 
{\bf 1. The unramified case, $E=\Q_2(\sqrt{5})$.} We write the elements of $E$ as $x+\varphi y$ 
where 
$\varphi=\frac{1+\sqrt{5}}2\in E$ is a root of $X^2-X-1$ and $x, y \in \Q_2$; then $\ri_E=\{x+\varphi y\mid x,y \in \Z_2\}$ (of course, for elements of $E$, the first representation is equivalent to simply writing $x'+y'\sqrt{5}$, but for the ring of integers this would not give the whole ring).  The norm map in these coordinates is 
$$N_{E/\Q_2}(x+\varphi y)=\left(x+\frac{y}2+\frac{\sqrt{5}}2y\right)\left(x+\frac{y}2-\frac{\sqrt{5}}2y\right)= x^2+xy-y^2.$$
This causes small changes to our na\"ive calculations of Example  \ref{ex:quadr}. 
In particular, the pullback to $\bT$ of the differential form $\frac{dx}x$ on $\G_m$ is now (exactly as in (\ref{eq:pullback_res}), with 
$\varphi'=\frac{1-\sqrt{5}}2$ denoting the Galois conjugate of $\varphi$)
\begin{equation}\label{eq:pullback2unram}
\begin{aligned}
&
\omega_T=\frac{d(x+\varphi y)}{x+\varphi y}\wedge\frac{d(x+\varphi' y)}{x + \varphi' y}
=\frac{(dx+\varphi dy)\wedge(dx +\varphi' dy)}{x^2+xy- y^2}\\
& = \frac{\varphi'-\varphi}{N_{E/F}(x+ \varphi y)} dx\wedge dy
=\frac{-\sqrt{5}}{N_{E/F}(x+ \varphi y)} dx\wedge dy.
\end{aligned}
\end{equation} 
Note the absence of the factor  $2$, compared with (\ref{eq:pullback_res}), which would have caused trouble here. 

From the volume to the point-count: since the extension is unramified, this part does not change. 
In fancy terms, we can say that the so-called standard model over $\Z_2$ for $\bT$,  defined by the coordinates $x$ and $y$ that we chose, is smooth. The set of $\F_2$-points of 
its special fibre (in simple terms, the reduction mod $2$, which is the uniformizer of our unramified extension) is still 
$(\F_4)^\times$. 

Thus, the volume formula (\ref{eq:res_quadr}) still holds in this case. 
If this extension was obtained as the completion at $2$ of a quadratic extension $K=\Q(\sqrt{d})$ of $\Q$, then $d\equiv 5 \mod 8$, and therefore in this case the discriminant of $K$ is just $\sqrt{d}$. 
Hence, the relation (\ref{eq:classnum1}) holds without any modification. 

The norm-1 torus of this extension is treated very similarly to the unramified case with $p\neq 2$; and the final answer for its volume is given by the same formula as in the case $p\neq 2$. 

{\bf 2. Ramified case 1,  $E=\Q_2(\sqrt{2})$.} We have $\ri_E=\{x+y\sqrt{2}\mid x,y\in \Z_2\}$, and the norm map is 
$N_{E/\Q_2}(x+\sqrt{2}y)= x^2-2y^2$ similarly to the $p\neq 2$ case.
In this case the calculation of the form $\omega_T$ applies verbatim, so we get the extra factor $\frac12$ in (\ref{eq:ex1}). Specifically, 
(\ref{eq:ex1}) now becomes:
\begin{equation*}
\vol_{|\omega_T|}(T^c)=\frac1{2\sqrt{2}}\int_{\{(x, y) \in \Z_2^2:|x^2-2y^2|=1\}} dx dy.
\end{equation*} 
The condition $x^2-2y^2\in \Z_2^\times$ is still equivalent to $x \in \Z_2^\times$, and we obtain  
that $\vol_{|\omega_T|}(T^c)=\frac{1}{2\sqrt{q}}\frac{q-1}q$; here $q=2$, we are just writing it this way for the ease of comparison with 
(\ref{eq:res_quadr}). 
However, the relation (\ref{eq:classnum1}) again holds without modification, since  if the completion of $K=\Q(\sqrt d)$ at $2$ is ramified, then $\Delta_K=4\sqrt{d}$, and $|\Delta_K|_2^{1/2}$ has the extra factor $2$ as well. 

\begin{example}\label{ex:q22}
\emph{The norm-1 torus of $E=\Q_2(\sqrt{2})$:}
Unlike the full torus obtained by the restriction of scalars,  for the norm-1 subtorus the reduction of the volume computation to counting residue-field points looks very different from Example \ref{ex:norm1}.
We include this point-count exercise without a full discussion of its implications for the computation of the volume with respect to $\omega_T$ or $\omega^\can$, to illustrate the difficulties that arise when the reduction $\mod \varpi$ is not smooth.

We write the $2$-adic expansions $x=x_0+2x_1+4x_2+ \ldots,$  $y=y_0+2y_1+4y_2+ \ldots,$ with $x_i, y_i \in \{0,1\}$.
Then 
\begin{equation}\label{eq:2squaring}
\begin{aligned}
x^2&=  (x_0+2x_1+4x_2+ 8x_3+ 16x_4 + \ldots)^2 \\
{}&= x_0^2+4(x_0x_1+x_1^2)+8(x_0x_2)+16(x_2^2+x_1x_2+x_0x_3)\\
{}&+2^5(x_0x_4+x_1x_3)+2^6(x_3^2+x_0x_5+x_1x_4+x_2x_3)+ \ldots.
\end{aligned}
\end{equation}

Thus the condition $x^2-2y^2=1$ becomes (each line comes from the congruence modulo the next power of $2$ (indicated in the left column), 
and each congruence is congruence $\mod 2$): 
\begin{equation*}
\begin{aligned}
&\mod 2: &\quad x_0=1;\\
&\mod 2^2: &\quad  y_0=0; \\
&\mod 2^3: &\quad x_0x_1+x_1^2 \equiv 0; \\
&\mod 2^4: &\quad x_0x_2 -(y_0y_1 +y_1^2) \equiv 0; \\
&\mod 2^5: &\quad x_2^2+x_1x_2+x_0x_3 - (y_0y_2) \equiv 0; \\
&\mod 2^6: &\quad x_0x_4+x_1x_3 - (y_2^2+y_1y_2+y_0y_3) \equiv 0; \\
&\mod 2^7: &\quad x_3^2+x_0x_5+x_1x_4+x_2x_3 -(y_0y_4+y_1y_3) \equiv 0; \\
&\ldots & \quad \ldots.
\end{aligned}
\end{equation*}
These equations yield: 
\begin{equation}\label{eq:2digits}
\begin{aligned}
&x_0=1, &\quad y_0=0,\\
&x_1 \text{ is arbitrary},  &\quad y_1 \text{ is arbitrary},\\
&x_2 = y_1, &\quad y_2 \text{ is arbitrary},\\
&x_3 = x_2^2+x_1x_2, &\quad  y_3 \text{ is arbitrary},\\
&\ldots 
\end{aligned}
\end{equation}
This illustrates that Hensel's Lemma, as expected, starts working once we have a solution $\mod 8$, but 
\emph{not for solutions $\mod 2$}. 
In fancier terms, we have the reduction $\mod 8$ map defined on the set of $(\Z_2\times \Z_2)$-solutions of the norm equation $x^2-2y^2=1$.
The image of this map is the set $\{(x_0, y_0, x_1, y_1, x_2, y_2)\}$ defined by the first three lines of (\ref{eq:2digits}) inside 
$\A^6(\F_2)$; we see that it is a $3$-dimensional hyperplane in $\A^6(\F_2)$.  
The fibre of the reduction $\mod 8$  map over each point in its image  is a translate of $(2^3)\subset Z_2$,  so the volume of each fibre is $\frac{1}{2^3}$. 
We obtain that the volume of our set of solutions is $\frac{\#\A^3(\F_2)}{2^3}$. 
Note that this is geometrically quite different from the answer in the ramified case in Example \ref{ex:norm1}, where the image of the reduction $\mod p$ map was two copies of an affine line, and therefore the image of the reduction $\mod p^2$ map then was \emph{two} copies of an affine plane over $\F_p$; and the image of reduction $\mod p^3$ map, inside $\A^6(\F_p)$, was \emph{two} copies of $\A^3(\F_p)$.
\end{example} 

{\bf 3. Ramified case 2,  $E=\Q_2(\sqrt{3})$.}  
Exactly as in the case $E=\Q_2(\sqrt{2})$ considered above, for $\bT=\res_{E/\Q_2}\G_m$, 
the volume $\vol_{|\omega_T|}(T^c)$ has the extra factor of $\frac12$ compared with the $p\neq 2$, and so does the discriminant (basically, the calculation of the volume of $T^c$ is not sensitive to \emph{which} ramified extension of $\Q_2$ we are considering). 

\begin{example} \emph{The norm-1 torus of $E=\Q_2(\sqrt{3})$.} 
The calculation is also similar to \ref{ex:q22} above: we use (\ref{eq:2squaring}) to count solutions of the equation $x^2-3y^2=1$, but the geometry looks slightly different.  As above, from  (\ref{eq:2squaring}) we get: 
\begin{equation*}
\begin{aligned}
&\mod 2: &\quad x_0^2+y_0^2=1;\\
&\mod 2^2: &\quad  x_0^2-3y_0^2\equiv 1 \mod 4; \\
&\mod 2^3: &\quad x_0x_1+x_1^2 - 3(y_0y_1+y_1^2)\equiv 0 \mod 2; \\
\end{aligned}
\end{equation*}

From the first two equations, we get: $x_0=1$, $y_0=0$ (note that the second congruence $\mod 4$  does not allow for the option $x_0=0$, $y_0=1$, in contrast to the ramified case with $p\neq 2$; this is an example of a solution $\mod 2$ that \emph{does not lift} to a solution $\mod 4$). 
Then the third equation becomes $x_1+x_1^2-3y_1^2\equiv 0 \mod 2$, which allows for arbitrary $x_1$ and makes $y_1=0$.
Next, from the truncation $\mod 2^4$, we get (plugging all this information in): 
$$
\quad 4(x_1+x_1^2) -8 (x_2-3\cdot 0\cdot y_2)  \equiv 0 \mod 16.
$$
If $x_1=0$, we get $x_2=0$; if $x_1=1$, then $x_2=1$.   
Summing it up, so far we have:
\begin{equation}\label{eq:truncation_q2rt3}
\begin{aligned}
&x_0=1, &\quad y_0=0,\\
&x_1 \text{ is arbitrary},  &\quad y_1 =0,\\
&x_2 = x_1, &\quad y_2 \text{ is arbitrary},\\ 
\end{aligned}
\end{equation}
Note that the pattern so far has been something we have not seen before: 
the congruences modulo an odd power of $2$ might have some `carry-over' to the next power; then the congruence modulo the next even power
forces the truncated expression to become literally zero with no carry-over. 

Continuing with one more step:
\begin{equation*}
\begin{aligned}
&\mod 2^5: & {}\\
&{}  \quad x_2^2+x_1x_2+x_0x_3 - 3(y_2^2+y_1y_2+y_0y_3) \equiv 0 \mod 2; \\
&\mod 2^6: &{}\\
&{}\quad  (x_2^2+x_1x_2+x_0x_3 - 3(y_2^2+y_1y_2+y_0y_3)) + (x_0x_4+x_1x_3 - 3(y_0y_4+y_1y_3)) \equiv 0,
\end{aligned}
\end{equation*}
where the last congruence is $\mod 4$. 
As we plug in what we already know about the first terms, the first equation becomes $x_3+y_2^2\equiv 0 \mod 2$, so $x_3=y_2$. 
Plugging this into the second equation (and ignoring the squares), we get 
$(2x_2-2x_3)+(x_4+x_3)\equiv 0 \mod 4$, which determines $x_4$ uniquely. 
Again we notice that by now Hensel's Lemma started working as expected: at every step we get one linear relation in two unknown parameters, 
so the fibre over each truncated solution is an affine line over $\Q_2$. 

To summarize, (\ref{eq:truncation_q2rt3}) says that the image of the reduction $\mod 8$ map is 
a plane in $\A^6(\F_2)$, and thus for the volume of $T^c$, we get 
$\frac{\#\A^2(\F_2)}{2^3}$.
\end{example}

Now we are ready to return to the global calculations. 

\subsection{Global orbital integrals in $\GL_2$}\label{sub:orb_glob} 
Let us put together the information we have so far about the orbital integrals with respect to the canonical vs. geometric measures, and the information about the volume of $K^\times \big\backslash \left(\A_{K}^\fin\right)^\times$ that we just obtained. 

Let $\gamma\in \GL_2(\Q)$ be a regular semisimple element, such that its centralizer $T$ is non-split over $\R$.
Let $K$ be the quadratic extension of $\Q$ generated by the eigenvalues of $\gamma$. Then 
$T=\bT(\Q)$ with
$\bT=\res_{K/\Q} \G_m$, as in \S\ref{sub:an_cnf}. 
Let $f^\fin =\otimes_p f_p$, with $f_p$ equal to ${\bf 1}_{\GL_2(\Z_p)}$, 
the characteristic function of $\GL_2(\Z_p)$, for almost all $p$, 
be a test function on $G(\A^\fin)$.  
Taking the product of the relations (\ref{eq:canon_to_geom}) at every prime $p$, and applying the product formula to 
the  absolute values $\prod_p|D(\gamma)|_p$ and $\prod_p|\Delta_K|_p$, we obtain: 
\begin{equation}\label{eq:ofin}
O_\gamma^{\geom}(f^\fin)= |D(\gamma)|^{-1/2}|\Delta_K|^{1/2} L(1, \sigma_{G/T}) O_\gamma^{\can}(f^\fin).
\end{equation} 

We observe that the canonical measure on the centralizer of $\gamma$ (or any  measure coinciding with it at almost all places) is convenient for defining a global orbital integral because with such a measure, all but finitely many factors $O_{\gamma_p}(f_p)$ are equal to  $1$, and thus the global orbital integral is a finite product, namely, the product over the primes that divide $D(\gamma)$ and the primes where $f_p\neq {\bf1}_{G(\Z_p)}$, and there is no question of convergence.  

In the Trace Formula,  the  orbital integrals are weighted by volumes; and the 
volume has to be taken with respect to the same measure on the centralizer that was used to define the orbital integral. 
Comparing (\ref{eq:ofin}) with  (\ref{eq:cnf}),  we  see explicitly that for $\GL_2$, the volume term contains some of the factors that also appear when we pass from the canonical measure  to the geometric measure on the orbit. 
Now we are ready to explicate an observation (that is implicit in the work of Langlands) that switching to the geometric measure makes the volume term disappear in the case 
$\bG=\GL_2$, in addition to making the orbital integrals at finite places better-behaved. This comes at the cost of now having the orbital integral expressed as only a \emph{conditionally convergent} infinite product. 
This theorem can be thought of as the main point of this note.

\begin{theorem}\label{thm:main}
 Let $\gamma$ be a regular elliptic element of $\GL_2$ as above, that splits over a quadratic extension $K$. 
Let $\bT=\res_{K/\Q}\G_m$.  Then
$$\vol^\can(\bT(\Q)\backslash \bT(\A^\fin)) O_\gamma^\can(f^\fin) =\frac{|D(\gamma)|^{1/2}}{2\pi}
O_\gamma^\geom (f^\fin).$$
\end{theorem}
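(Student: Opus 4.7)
The plan is to reduce the identity, via cancellation, to a statement equivalent to the analytic class number formula (\ref{eq:quadr_class}) for the imaginary quadratic field $K$, which was already established in \S\ref{sub:an_cnf}. Since $f^\fin$ appears linearly on both sides, after dividing through by $O_\gamma^\can(f^\fin)$ one is left with a pure identity among volumes and special $L$-values, and the theorem becomes essentially a repackaging of the class number formula in the language of orbital integrals.

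Substituting the local-to-global comparison (\ref{eq:ofin}) into the right-hand side of the theorem produces the factor $|\Delta_K|^{1/2}L(1,\sigma_{T\backslash G})/(2\pi)$ multiplying $O_\gamma^\can(f^\fin)$. Two identifications are then needed. First, $\sigma_{T\backslash G}$ must be recognized as the quadratic character $\chi_K$ of $\Gal(K/\Q)$: for $\bT=\res_{K/\Q}\G_m$ one has $\sigma_T=\mathbf{1}\oplus \chi_K$, and the sub-representation $\sigma_G$ coming from $\det\in X^\ast(\bG)$ equals the trivial summand, since $\det$ pulls back along $\bT\hookrightarrow \GL_2$ to the norm character. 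Hence $L(1,\sigma_{T\backslash G})=L(1,\chi_K)$.

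Second, $\vol^\can(\bT(\Q)\backslash \bT(\A^\fin))$ must be identified with $h_K/w_K$. The explicit computation of Example \ref{ex:quadr}, together with \S\ref{sub:two} at $p=2$, shows that for $\bT=\res_{K/\Q}\G_m$ one has $T^0=T^c=\ri_v^\times$ at every finite place, and therefore the canonical measure coincides locally with the measure $\nu_T$ used in (\ref{eq:Tama1}). The exact sequence (\ref{eq:exseq_ideles}) then gives $\vol^\can(\bT(\Q)\backslash \bT(\A^\fin))=h_K/w_K$ immediately. At this point an application of (\ref{eq:quadr_class}) rewrites $|\Delta_K|^{1/2}L(1,\chi_K)/(2\pi)$ as $h_K/w_K$, matching the left-hand side and finishing the argument.

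The hard part is not any single calculation but the systematic bookkeeping of normalizations: reconciling the geometric measure, the canonical measure, the Tamagawa measure, and the finite-level measure $\nu_T$, and confirming they agree in the ways required. This matching is particularly clean in the present setting because $\bT$ is a restriction of scalars, so the N\'eron model is $\res_{\ri_K/\Z}\G_m$ and the index $[T^c:T^0]$ is trivially $1$ at every place; for a general torus further local correction factors would appear, and the analogous global identity would no longer reduce so cleanly to the class number formula.
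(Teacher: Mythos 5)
Your calculation is correct and uses the same ingredients as the paper --- the local-to-global relation (\ref{eq:ofin}), the decomposition $\sigma_T=\mathbf{1}\oplus\chi_K$, and the identification of $\vol^\can(\bT(\Q)\backslash\bT(\A^\fin))$ with $h_K/w_K$ via (\ref{eq:Tama1}) --- but assembles them in a different order. The paper deliberately does \emph{not} invoke the analytic class number formula (\ref{eq:quadr_class}): after substituting (\ref{eq:ofin}), it rewrites the canonical volume as $\frac{\sqrt{|\Delta_K|}}{2\pi}L(1,\chi_K)\cdot\vol^{Tama}(\bT(\Q)\backslash\bT(\A)^1)$ using the intermediate equalities of (\ref{eq:cnf_proof}) and then quotes $\tau(\bT)=1$ as the sole arithmetic input. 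You instead close the loop by citing (\ref{eq:quadr_class}). Because that formula was itself derived in \S\ref{sub:an_cnf} from $\tau(\bT)=1$ and nothing else, the two routes are equivalent in mathematical content; but the paper's remark following the theorem explicitly emphasizes that its proof bypasses the class number formula, precisely because $\tau(\bT)=1$ is the fact that admits generalization beyond $\GL_2$, whereas the class number formula is a by-product specific to maximal tori of $\GL_2$. So your opening sentence --- framing the theorem as ``essentially a repackaging of the class number formula'' --- inverts the paper's intended emphasis, even though the computation is sound. A small simplification: the equality $T^0=T^c$ for $\res_{K/\Q}\G_m$ does not actually need the case-by-case checks of Example~\ref{ex:quadr} and \S\ref{sub:two}; as the footnote to (\ref{eq:Tama1}) points out, it follows uniformly from \S\ref{subsub:general}(1) because the cocharacter lattice of a restriction of scalars is an induced module, whose inertia coinvariants are torsion-free.
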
 

\begin{proof} 
Combining the relation (\ref{eq:ofin}) with the calculation in (\ref{eq:cnf_proof}), we see that the factors $L(1,\chi_K)$
and $|\Delta_K|^{1/2}$ cancel out, and 
we obtain: 
$$\begin{aligned}
&\vol^\can(\bT(\Q)\backslash \bT(\A^\fin))O_\gamma^\can(f^\fin) 
= \frac{\sqrt{|\Delta_K|}}{2\pi}L(1, \chi_T)\vol^{Tama}(\bT(\Q)\backslash \bT(\A)^1) O_\gamma^\can(f^\fin)\\
& = 
\frac{|D(\gamma)|^{1/2}}{2\pi} \tau(\bT)
O_\gamma^\geom (f^\fin).
\end{aligned}$$
Since $\bT$ is obtained from $\G_m$ by restriction of scalars, we have $\tau(\bT)=1$, as discussed above in \S\ref{subsub:Tama}. 
\end{proof} 

We make a few remarks: 
\begin{remark}
{\bf 1.} In our statement, the left-hand side of the equation is actually independent of the choice of the measure on $\bT(\Q_p)$ at every finite place $p$, as long as the volume and the orbital integral are taken with respect to the same measure; this is consistent with the right-hand side, which does not involve any measure on $\bT$ at all. 

{\bf 2.} The volume that appears in the Trace Formula is $\vol(\bT(\Q)\backslash \bT(\A)^1)$; when passing from the volume appearing on the left-hand side of Theorem \ref{thm:main} to this volume, the ratio between them will depend on the precise choice of the normalization of the component of the measure at infinity. Calculations  of this sort (for a general reductive group, with various specific choices of the measure at infinity) appear in \cite{gan-gross:haar} and \cite{gross:motive}.

{\bf 3. }The right-hand side of the relation in Theorem  \ref{thm:main} might be preferable in two ways: 
there is no complicated  volume term,   and the orbital integral 
has the local components that are continuous as functions on the
Steinberg-Hitchin base (however, now the orbital integral on the right is an infinite product that converges conditionally). 

{\bf 4. } The proof of the theorem does not use the analytic class number formula. Moreover, 
the proof is general, except for three pieces: 
\begin{enumerate}  
\item The specific knowledge that at every finite place $v$, 
$\vol_{\omega_T}(\cT^0)$ is $|\Delta_K|_v^{1/2}L_v(1, \chi_T)^{-1}$. 
See \cite{gan-gross:haar} for a generalization of such a relation. 

\item For general tori Tamagawa numbers can be difficult to compute explicitly 
(see \cite{rud:tori} for some partial results),  but for the maximal tori in $\GL_n$ they are $1$. 

\item The factor $2\pi$ in the denominator is specific to $\GL_2$; in general, it needs to be replaced with the factor 
determined by the component of the chosen measure at infinity.
\end{enumerate} 
\end{remark}

We conclude with a brief discussion of Eichler-Selberg Trace Formula, since it is the starting point for Altug's lectures. This discussion is entirely based on \cite{knightly-li}. 
\subsection{Eichler-Selberg  Trace Formula for $\GL_2$}
The Eichler-Selberg Trace Formula expresses the trace of a Hecke operator $T_n$ on the space $S_k(N)$ of cusp forms of weight $k$  and level $N$. 
For simplicity of exposition, we set $N=1$ in this note; in this case the central character is also trivial. 
In this setting, the Eichler-Selberg Trace Formula states: 
\begin{equation}\label{EichlerTF} 
\begin{aligned}
& n^{1-k/2} \tr T_n & = \frac{k-1}{12}\begin{cases} &1, n \text{ is a perfect square} \\ & 0 \text{ otherwise } \end{cases} \\
&{}& -\frac12\sum_{t^2<4n} \frac{\rho^{k-1}-\bar\rho^{k-1}}{\rho-\bar\rho} \sum_m h_w\left(\frac{t^2-4n}{m^2}\right) \\
&{}& -\frac12\sum_{d\mid n} \min(d, \frac{n}{d})^{k-1},   
\end{aligned} 
\end{equation}
where  in the middle line  $\rho$ and $\bar{\rho}$ are roots of the polynomial $X^2-tX+n$, and 
$h_w\left(\frac{t^2-4n}{m^2}\right)$ is the weighted class number of the order in $\Q[\rho]$ that has discriminant $\frac{t^2-4n}{m^2}$.
The sum over $m$ runs over the integers $m\ge 1$ such that $m^2$ divides $t^2-4n$, and $\frac{t^2-4n}{m^2}$ is $0$ or $1$ $\mod 4$. 

The goal of this section is to sketch, without any detail, a connection between this formula and the (geometric side of)  Arthur-Selberg Trace formula. 

\subsubsection{The test function} 
We start with a very brief recall of the connection between modular forms and automorphic forms on $\GL_2$. We refer to e.g., Knightly and Li \cite{knightly-li} for all the details. 

Let $\bG=\GL_2$ for the rest of this section. 
A cusp form of weight $k$ generates (as a representation of $\bG(\A_\Q)$ under the  action by right translations) a closed subspace $(\pi, V)$ of $L^2_0(\bG(\Q)\backslash \bG(\A))$; for a level $1$ cusp form (which is our assumption here, so in particular we should assume $k>2$), the central character of $\pi$ is trivial.  
By Flath's theorem, the representation $\pi$ factors as a restricted  tensor product 
$$\pi= \pi_\fin\otimes \pi_{\infty} = \otimes'_{p} \pi_p\otimes \pi_{\infty}.$$
Since $(\pi, V)$ came from a cusp form of weight $k$, we have $\pi_{\infty}=\pi_k$ -- the discrete series representation of highest weight $k$. 

Every function $f\in C_c(\bG(\A)) $ gives rise to a linear operator $R(f)$ on $L^2(\bZ(\A)\backslash \bG(\A))$  defined by 
$$(R(f)\phi)(g):= \int_{(\bZ\backslash \bG)(\A)}f(x)\phi(gx) dx.$$
In this language, the Hecke operator $T_n$ on $S_k$ is precisely  $n^{k/2-1}R(f_{n,k})$, where $f_{n,k}$ is a specific test function in the space $L^2(\bG(\Q)\backslash \bG(\A)^1)$. We quote the definition of this test function from \cite{knightly-li}.  
\begin{itemize} 
\item Let $f_\infty$ be a matrix coefficient of the representation $\pi_k$. 
By orthogonality of matrix coefficients, this ensures that  the image of $R(f)$ is contained in $(\pi, V)$. 
(Since $(\pi, V)$ is irreducible, this means $R(f)$ projects onto $V$). 

\item For $p\nmid n$, let $f_p$ be the characteristic function of $\bZ(\Q_p) \bG(\Z_p)$, 
\item For  $p\mid n$, let $f_p$ be the characteristic function of $\bZ(\Q_p) M_{n,p}$, where $M_{n,p}$ is the set of matrices of determinant $n$ in $M_2(\Q_p)$ (we prefer to think of it as the characteristic function of the union of the double cosets of the Cartan decomposition for $\GL_2(\Q_p)$ of determinant $n$).

\end{itemize}

\subsubsection{The transition to Arthur Trace Formula}
We plug $f:=f_{n,k}$ into Arthur's Trace Formula for $\GL_2$, and examine the geometric side. 
Since for our test function $f$, the continuous and residual parts of the spectral side vanish, the geometric side in fact equals
$\tr R(f)$ (see \cite[\S 22]{knightly-li}). 
Finally, Knightly and Li show that:  
\begin{itemize}
\item The first line of (\ref{EichlerTF}) matches   
the contribution of the trivial conjugacy class;
\item The last line matches the contribution of the unipotent and hyperbolic conjugacy classes.
\item The middle line matches the contribution of the elliptic conjugacy classes.
\end{itemize}  
We discuss why the last claim is plausible.
By definition of the test function $f$, its orbital integrals vanish on all elements $\gamma$ such that $\det(\gamma)\neq n$; hence, in the geometric side of Arthur's Trace Formula,  we are left with the sum over $\gamma\in \bG(\Q)$ satisfying $\det(\gamma)=n$. 
The conjugacy classes in $\GL_2$ are parametrized by characteristic polynomials, and the elliptic ones correspond to the polynomials with negative discriminants, so at least superficially, we recognize the sum  over the integers $t$ such that $t^2<4n$ as a sum over the rational elliptic conjugacy classes. 

Next, note that the expression $\frac{\rho^{k-1}-\bar\rho^{k-1}}{\rho-\bar\rho} $ is the value of the character of $\pi_k$ on the corresponding conjugacy class; thus, we recognize it as the orbital integral of $f_\infty$ (see e.g. \cite[\S 1.11]{kottwitz:clay} for the discussion of characters as orbital integrals of matrix coefficients; see also \cite[Ch.I, \S5.2]{gelfand-graev}). 
 
Knightly and Li show (in our notation):
\begin{equation} 
O_\gamma^\can(f^\fin) = \sum_m h_w\left(\frac{t^2-4n}{m^2}\right),
\end{equation} 
where the sum over $m$ is as in (\ref{EichlerTF}).
While the appearance of class numbers in our earlier calculations is suggestive,  and proves this relation in the trivial case when $t^2-4n$ is square-free, it appears that our  arguments  are insufficient for getting  a simpler proof of this claim in general (other than by essentially direct computation of the both sides, or matching the computation of the right-hand side in \cite{knightly-li} with the calculation on the building in \cite{kottwitz:clay}, the results of which we already quoted above). 
A similar statement for $\GL_n$, relating orbital integrals to sums of class numbers of orders, 
is proved by Zhiwei Yun, \cite{yun:dedekind}.


\section{Appendix A. Kirillov's form on co-adjoint orbits: two examples}
\begin{center} {by Matthew Koster } \end{center} 

In this appendix we illustrate Kirillov's construction of a volume form on co-adjoint orbits in a Lie algebra. 
Here we work over $\R$ in order to be able to use the intuition from calculus. In these examples, we also relate this form to the geometric measure discussed in the article. 
\footnote{This work was part of an NSERC summer USRA project in the summer of 2019; we acknowledge the support of NSERC.}

\subsection{The coadjoint orbits}\label{sub:coorb}
A large part of this section is quoted from \cite[\S 17.3]{kottwitz:clay} for the reader's convenience and to set up notation.
Let $G$ denote a semisimple Lie group, $\mathfrak{g}$ its Lie algebra,  and $\mathfrak{g}^*$ the linear dual space of $\mathfrak{g}$.
We denote elements of $\fg$ by capital letters, e.g. $X$, and use starts to denote elements of $\fg^\ast$; 
unless explicitly stated there is no a priori relationship between $X$ and $X^*$.

$G$ acts on $\mathfrak{g}$ by Ad and acts on $\mathfrak{g}^*$ by Ad$^*$, where
$$\langle \Ad^*(g)(X^*), X \rangle = \langle X^*, \Ad(g^{-1})(X) \rangle.$$ 
Let $\ri(X^*) \subset \mathfrak{g}^* $ denote the  orbit of $X^*$ under this action (called a co-adjoint orbit). \\
We recall that the differential of the adjoint action  $\Ad$ of $G$ is the action of $\mathfrak{g}$ 
on itself by $\ad$ where $\ad_X(Z) = [X,Z]$.
The co-adjoint action of $\fg$ 
on $\mathfrak{g}^*$  is 
the differential of $\Ad^*$; 
we denote it 
by $\ad^*$; explicitly, this action is defined by
$\langle \ad_X^*(Y^\ast), Z \rangle = \langle Y^\ast, [Z,X] \rangle$. 

A choice of an element $X^\ast\in \fg^\ast$  defines a map 
$\varphi_{X^\ast}:G \to \mathfrak{g}^*$ 
by $\varphi_{X^\ast}(g) = \Ad^*_g(X^*)$. 
The differential of this map at the identity $e \in G$ gives an identification of  $\mathfrak{g}/\mathfrak{c}(X^*)$ with the tangent space $T_{X^*}\ri (X^*)$ at $X^\ast$, 
defined by $X \mapsto \ad^*_X(X^*)$. Here 
$\fc(X^\ast)$ is the stabilizer of $X^\ast$ under $\ad^\ast$, and we are viewing $T_{X^*} \ri(X^*)$ as a subspace of $T_{X^*} \mathfrak{g}^* \cong \mathfrak{g}^*$. We denote this identification by $\Phi_{X^\ast}:\mathfrak{g}/\fc(X^\ast) \to T_{X^\ast}\ri(X^\ast)\hookrightarrow \mathfrak{g}^*$. 
The element $X^\ast$ gives an alternating form $\omega_{X^\ast}$ on $\fg$, defined by 
\begin{equation}
\omega'_{X^\ast}(X, Y):= \langle X^\ast, [X, Y] \rangle = -\langle \ad^\ast(X) X^\ast, Y \rangle.
\end{equation}
This form clearly vanishes on $\fc(X^\ast)$, and gives a non-degenerate bilinear form on $\fg/\fc(X^\ast)$, which we have just identified with $T_{X^\ast}\ri(X^\ast)$. 
Thus, given a co-adjoint orbit $\mathcal O$, we get a symplectic $2$-form $\omega'$ on it by letting the value of $\omega'$ at $X^\ast\in \mathcal O$ equal $\omega'_{X^\ast}$. In particular, as a manifold, $\mathcal O$ has to have even dimension; if its dimension is $2k$, then 
the $k$-fold wedge product of the form $\omega'$  gives a volume form on $\mathcal O$. 

Over a field of characteristic zero, we can identify a semisimple Lie algebra with its dual; we will use the Killing form for this. 
Then the adjoint orbits in $\fg$ 
get identified with the co-adjoint orbits in $\fg^\ast$, and thus we get a very natural algebraic volume form on each adjoint orbit in $\fg$. 
Here our goal is to compute this form explicitly in two examples: the regular nilpotent orbit in $\mathfrak{sl_2}(\R)$ and a semisimple 
$\SO_3(\R)$-orbit in $\mathfrak{so}_3(\R)$ (we do not use the accidental isomorphism in this calculation). In both cases the orbit will be two-dimensional, so we are just computing the form denoted by $\omega'$ above. 

\subsection{Rewriting the form as a form on an orbit in $\fg$}\label{sub:form}
Given $X_0^\ast\in \fg^\ast$, we compute the form $\omega$ on the orbit of an element $X_0\in \fg$ that corresponds to 
$X_0^\ast\in \fg^\ast$ under the isomorphism defined by Killing form,  in three steps:
\begin{enumerate}
\item
Compute the map $\Phi_{X_0^\ast}$.
\item For $\widetilde{v_1}, \widetilde{v_2} \in T_{X_0^*}\ri(X_0^*)$ find $v_1, v_2 \in \mathfrak{g}$ with $\Phi_{X_0^\ast}(v_i) = \widetilde{v_i}$  
for $i=1,2$, and then evaluate $\omega_{X_0^*}(\widetilde{v_1}, \widetilde{v_2} ) = \langle X_0^*, [v_1,v_2]\rangle$.  
\item Using Killing form, identify $\fg$ with $\fg^\ast$, which identifies a co-adjoint orbit of $X_0^\ast$ in $\fg^\ast$ with an adjoint
 orbit  of an element $X_0\in \fg$.
Then use the adjoint action to explicitly define the volume form  $\omega_X$ on $T_X \ri$ at a point $X$ in this orbit by pulling back the form $\omega_{X_0}$ . 
\end{enumerate}

\subsection{Example I: a regular nilpotent orbit in $\fsl_2(\R)$}\label{sub:sl2R} 
Let $G= \SL(2; \mathbb{R})$, $\mathfrak{g} = \mathfrak{sl}(2; \mathbb{R})$, and consider the standard basis 
$\{\be, \bff, \bh\}$ 
for  $\mathfrak{g}$ given by: 
\begin{align*}
{\bf e} = \begin{bmatrix}
0 & 1 \\
0 & 0
\end{bmatrix}, \ \ \ \ 
{\bf f} = \begin{bmatrix}
0 & 0 \\
1 & 0
\end{bmatrix}, \ \ \ \
{\bf h} = \begin{bmatrix}
1 & 0 \\
0 & -1
\end{bmatrix}. 
\end{align*}

Let $\{\be^*, \bff^*, \bh^*\}$  be the basis for $\mathfrak{g}^*$ dual to $\{\be, \bff, \bh\}$ under the Killing form. Explicitly this means that $\be^*(\bff)=4$, $\bff^*(\be)=4$, and $\bh^*(\bh)=8$. 
We will compute Kirillov form on the co-adjoint orbit $\ri_{\bff^\ast}$ 
of $\bff^*$, which we identify with the adjoint orbit of $\be$ in $\fg$. 

When we refer to coordinates $x,y,z$ on $\fg$, it is with respect to our chosen basis $\{\be, \bff, \bh\}$. Given this choice of coordinates, we 
have the basis of the space of $1$-forms on $\fg$ given by $dx$, $dy$, $dz$. 

Under the isomorphism $\fg^\ast \simeq \fg$ defined by Killing form, 
a point $(x,y,z)\in \fg^\ast$ is mapped to 
$\left[\begin{smallmatrix} z/2 & x \\ y & -z/2\end{smallmatrix}\right]\in \fg$. 
We can describe the orbit of $\be$ very explicitly in these coordinates.

\subsubsection{The nilpotent cone} 
If we are working over $\R$, then the set of nilpotent elements in $\fg$ forms a cone: indeed, for a nilpotent matrix we have 
$\det\left[\begin{smallmatrix} z/2 & x \\ y & -z/2\end{smallmatrix}\right]=0$, i.e., $z^2+4xy=0$. 
(One can easily see that it is, indeed, a cone by the change of coordinates  $u=x+y$, $v=x-y$: in these coordinates, the equation of the orbit becomes $z^2+u^2=v^2$). See e.g. \cite[\S 2.3]{debacker:clay}  for more detail of this picture.  
 
The nilpotent cone consists of 3 orbits of $\SL_2(\R)$: $\{0\}$, the half-cone with $v>0$ (which is the orbit of $\be$), and the half-cone with $v<0$ (the orbit of $\bff$).  
One can explicitly compute that given a matrix $X_0:=\left[\begin{smallmatrix} z_0/2 & x_0 \\ y_0 & -z_0/2\end{smallmatrix}\right]\in \fg$ satisfying 
$z_0^2=-4x_0y_0$ (which forces $x_0y_0<0$ if we are working over $\R$), the element $g_0$ below provides the conjugation so that
$X_0 = Ad^*_{g_0} (\be)$ (it is convenient for us to write $g_0$ as a product of a diagonal and a unipotent matrix with a view toward further calculations): 
\begin{equation}\label{eq:g}
g_0 = \begin{bmatrix}
\sqrt{x_0} & 0 \\ 
 \sqrt{-y_0} & \frac{1}{\sqrt{x_0}} 
\end{bmatrix}
= \begin{bmatrix}
\sqrt{x_0} & 0 \\ 
0 & 
\frac{1}{\sqrt{x_0}} \\
\end{bmatrix} \cdot  \begin{bmatrix}
1 & 0 \\  \sqrt{-{x_0}{y_0}} & 1 \\
\end{bmatrix}. 
\end{equation}
Note that since $v=x_0-y_0>0$,  and $x_0y_0<0$, we have $x_0>0, y_0<0$, which explains our choice of signs inside the square roots.

\subsubsection{A measure from calculus} Given that our orbit is an open half-cone, we can write down a natural measure on it as a parametrized surface. 
In fact, as we think of a parametrization for this cone, we can be guided by the fact that we are looking for an $\SL_2(\R)$-invariant measure. We recall Cartan decompositon: $\SL_2(\R)=UK$, where $U$ is the group of lower-triangular unipotent matrices and $K=\SO_2(\R)\simeq S^1$. 
The adjoint action of $\SO_2(\R)$ is given by a fairly complicated formula (see \cite[\S 2.3]{debacker:clay}), but at the same time one has the obvious action of $S^1$ on the cone by rotations; thus it is reasonable to make a rotation-invariant measure on our cone. 
Therefore, we use cylindrical coordinates to parametrize the cone $z^2+u^2=v^2$ and arrive at $z=t\cos(\theta)$, $u=t\sin(\theta)$, $v=t$, which translates to the parametrization 
$\rho:(0, \infty) \times [0, 2\pi) \to \mathfrak{g}$  given by: 
\begin{equation}
\rho(t, \theta) = (t (\cos \theta +1), t (\cos \theta -1), t \sin \theta).
\end{equation}
The natural volume form on the cone is then $dt\wedge d\theta$;  below we see how it compares to Kirillov's volume form. 
Note: now that we made this guess at a form, we could just express the actions of $U$ and $K=\SO_2(\R)$ on the cone in the $(t, \theta)$ coordinates, and check if this form is invariant. However, we prefer to compute the Kirillov's form directly and derive the comparison this way.

\subsubsection{Computing Kirillov's form}

\emph{Step 1. The calculation at $\bff^\ast$.}
We compute the map $\Phi_{\bff^\ast}$ defined in \S \ref{sub:coorb}. 
It is a map from $\fg$ to $\fg^\ast$, so given $(x,y,z)=x\be+y\bff+z\bh \in \fg$, its image under $\Phi_{\bff^\ast}$ is a linear functional on $\fg$. Thus it makes sense to write $\langle \Phi_{\bff^\ast}(x,y,z), (x',y',z')\rangle$, where $(x',y',z')\in \fg$. We evaluate:  
\begin{align*}
\langle \Phi_{\bff^\ast}(x\be+y\bff+z\bh),(x', y',z') \rangle &= \langle \bff^*, [x'\be+y'\bff+z'\bh, x\be+y\bff+z\bh] \rangle  \\ &= \langle \bff^*, 
2(z'x-x'z)\be + 2(y'z-z'y)\bff+(x'y-y'x)\bh \rangle \\
 &
= 8(z'x-x'z) \\ &= \langle -(2z\bff^*-x\bh^*), (x', y',z')\rangle.
\end{align*}
Thus 
\begin{equation}\label{eq:Phi-e}
\Phi_{\bff^\ast}(x\be + y\bff + z\bh ) = - (2z \bff^* - x\bh^*).
\end{equation}

\emph{Step 2.} 
We need to find a preimage under $\Phi_{\bff^\ast}$ for a vector $\tilde v \in T_{\bff^\ast} \ri_{\bff^\ast}$. 
We recall that this tangent space is identified with a subspace of $\fg^\ast$, which we later plan to identify with a subspace of $\fg$. Because of this latter anticipated identification, we write $\tilde v = x \bff^\ast + y \be^\ast + z \bh^\ast $.  We see directly from (\ref{eq:Phi-e}) 
 that for $\tilde v$ to be in the image of $\Phi_{\bff^\ast}$ it has to satisfy $y =0$,  and then $v =  z \be - \frac{x}2 \bh$ satisfies 
 $\Phi_{\bff^\ast}(v)=\tilde v$. 
 
 We are now ready to compute the form $\omega_{\bff^\ast}$. Let 
 $\widetilde{v_i} = x_i\bff^*+  y_i\be^* + z_i\bh^*$  for $i=1,2$; then as discussed above, we can take $v_i= z_i \be  - \frac{x_i}2 \bh$.
 Then $[v_1, v_2]  
   = (z_1x_2-x_1z_2)\be$, and  
finally we have that: 
\begin{equation}\label{eq:omega-e}
\omega_{\bff^*}  ( \tilde v_1, \tilde v_2 ) =\langle \bff^*, [v_1,v_2]\rangle  = 4(z_1x_2-x_1z_2).
\end{equation} 
Under the identification of the differential $2$-forms on a vector space with alternating $2$-tensors, we recognize this form as 
$-4 dx^*\wedge dz^*$, which we identify with the form $\omega_{\be}:= -4dx\wedge dz$ on the adjoint orbit of $\be\in \fg$.

\emph{Step 3. Pullback of $\omega_{\be}$ under the adjoint action.} We compute the operator $\Ad_{g_0}$ for the element $g_0$ from (\ref{eq:g}) in our coordinates, in order to use it to pull back the form $\omega_{\be}$. 
By the right-hand side of (\ref{eq:g}), the matrix of $\Ad_{g_0}$ in the basis $\{\be, \bff, \bh\}$ is 
\begin{equation}
\Ad_{g_0}=
\begin{bmatrix}
x_0 & 0 & 0\\ 
0 & \frac1{x_0} & 0 \\
0 & 0 & 1
\end{bmatrix}
\cdot
\begin{bmatrix} 
1 & 0 &  0\\
-{x_0}{y_0}  & 1   & 2 \sqrt{-{x_0}{y_0}} \\
\sqrt{-{x_0}{y_0}} & 0 & 1
\end{bmatrix}
= \begin{bmatrix} 
x_0 & 0 &  0 \\
-{y_0} & \frac{1}{x_0}   & 2 \sqrt{-\frac{y_0}{x_0}} \\
\sqrt{-{x_0}{y_0}} & 0  & 1
\end{bmatrix}.
\end{equation} 
Thus, 
\begin{equation}
\begin{aligned}
&(\Ad_{g_0})^\ast (dx\wedge dz)= 
\left|
\begin{matrix} 
x_0  & 0   \\
\sqrt{-x_0y_0} & 0
\end{matrix}
\right| dx\wedge dy 
- 
\left|
\begin{matrix} 
x_0   & 0  \\
\sqrt{-x_0y_0}  & 1
\end{matrix}
\right| dx\wedge dz 
+\left|
\begin{matrix} 
 0 & 0  \\
 0  & 1
\end{matrix}
\right| dy\wedge dz\\
&= x_0 dx\wedge dz.
\end{aligned}
\end{equation}

By definition, the volume form at $X_0\in \fg$ is $(\Ad_{g_0^{-1}})^\ast(\omega_{\be})$, and thus we get 
$$\omega_{X_0}=\frac{1}{x_0}dx\wedge dz.$$

Coverting to $(t, \theta)$-coordinates, we get: 

\begin{align*}
\rho^*(dx \wedge dz) &= ( (\cos \theta+1)dt - t \sin \theta  \ d \theta) \wedge (\sin \theta \ dt + t \cos \theta \ d \theta) \\ &= t \cos \theta (\cos \theta+1 ) dt \wedge d \theta - t \sin^2 \theta d \theta \wedge dt \\ &=(t \cos^2 \theta + t \cos \theta + t \sin^2 \theta ) dt \wedge d \theta \\ &= t (1+ \cos \theta) dt \wedge d \theta
\end{align*}

and therefore
\begin{align*}
\rho^* \omega &= \rho^* \left(\frac{4 dx \wedge dz }{x }\right) =   
\frac{ 4t (1+ \cos \theta) }{t (1+ \cos \theta)} dt \wedge d \theta\\  &= 4 \ dt \wedge d \theta.
\end{align*}

\subsubsection{Semisimple orbits in $\fsl_2(\R)$} The orbits of split semisimple elements in $\fsl_2(\R)$ are hyperboloids of one sheet asymptotically approaching the nilpotent cone on the outside; the orbits of elliptic elements are the individual sheets of hyperboloids of two sheets that lie inside the same asymptotic cone (see e.g., \cite[\S 2.3.3]{debacker:clay} for detail). 
Measures on them can be computed in a similar way (we return to this calculation below).
For now we compute another example, 
a semisimple orbit in $\mathfrak{so}_3(\R)$.

\subsection{Another example: a semisimple (elliptic) element in $\mathfrak{so}_3(\R)$}\label{sub:so3R}
Let $G=\SO(3)$, $\mathfrak{g}=\mathfrak{so}(3)$, and let $\{X, H, Y \}$ be the basis for $\mathfrak{g}$ given by:

$$X = \begin{pmatrix} 0 & 0 & 0 \\ 0 & 0 & -1 \\ 0 & 1 & 0 \end{pmatrix} \ \ H = \begin{pmatrix} 0 & 0 & 1 \\ 0 & 0 & 0 \\ -1 & 0 & 0 \end{pmatrix} \ \ Y  = \begin{pmatrix} 0 & -1 & 0 \\ 1 & 0 & 0 \\ 0 &0 & 0 \end{pmatrix} $$
Let $\{X^\ast, H^\ast, Y^\ast \}$ be dual basis for $\mathfrak{g}^*$  under the Killing form. 
Explicitly this means $X^*(X)=-2$, $Y^*(Y)=-2$, and $H^\ast(H)=-2$. Denote by $\ri_{H^\ast}$ the co-adoint orbit of $H^\ast$. Then a brief calculation shows that $\ri_{H^\ast}$ is the unit sphere: 
$$ \ri_{H^\ast} = \{ (x,y,z) \in \mathfrak{g}^* \ \vert \ x^2+y^2+z^2=1 \} $$
We compute Kirillov's form on this sphere, using a slightly different method from the above (to illustrate various approaches to such computations). Namely, rather than computing the form at one fixed base point on the orbit and then using the group action to compute it at all points, we do the computation directly for each point $X_0$ of our orbit $\ri_{H^\ast}$. 

As above, given  
$X_0\in \mathcal{O}_{H^\ast}$,
 we have  $\varphi_{X_0}:G \to \mathfrak{g}^* $ given by $\varphi_{X_0}(g) = Ad^*_{g}(X_0)$. 
 We write $X_0=(x_0,y_0,z_0)$ in $\{X^\ast, H^\ast, Y^\ast\}$-coordinates. 
As above, the differential of $\varphi_{X_0}$ at $e \in G$ gives an identification $\Phi_{X_0}:\mathfrak{g} / \mathfrak{c}(X)  \to T_{X_0}\ri_{H^\ast} \hookrightarrow{} T_{X_0}\mathfrak{g}^* \cong \mathfrak{g}^* $. This can be computed either by recalling that $\langle \Phi_{X_0}(X), Y\rangle  = \langle X_0, [Y,X] \rangle$ or via the exponential map:
\begin{align*} \Phi_{X_0}(X) = \frac{d}{dt} \big\vert_{t=0} (\varphi_{X_0} \circ \exp)(tX). 
\end{align*}
The result of this computation is that with respect to  
our coordinates, the matrix representation for $\Phi_{X_0}$ is given by:
$$\begin{pmatrix}0 & z_0 & -y_0 \\ -z_0 & 0 & x_0  \\ y_0 & -x_0 & 0  \end{pmatrix}.$$
Write  $\omega=f_1 dX^* \wedge dH^* + f_2 dX^* \wedge dY^* + f_3 dH^* \wedge dY^*$.  
We have:
\begin{align*}
&f_1(X_0) = \omega_{X_0}(\frac{\partial}{\partial X^*}, \frac{\partial}{\partial H^*})  = \langle X_0, [ (0,z_0,-y_0), (-z_0,0,x_0) ] \rangle = -2(x_0^2z_0 + y_0^2z_0 + z_0^3) = -2z_0\\
&f_2(X_0) = \omega_{X_0}(\frac{\partial}{\partial X^*}, \frac{\partial}{\partial Y^*})  = \langle X_0, [ (0,z_0,-y_0), (y_0,-x_0,0) ] \rangle = 2(x_0^2y_0 + y_0^3 + y_0z_0^2) = 2y_0\\
&f_3(X_0) = \omega_{X_0}(\frac{\partial}{\partial H^*}, \frac{\partial}{\partial Y^*})  = \langle X_0, [ (-z_0,0,x_0), (y_0,-x_0,0) ] \rangle = -2(x_0^3 +x_0 y_0^2 + x_0z_0^2) = -2x_0.
\end{align*}
Therefore,
\begin{equation}\label{eq:omega1}
 \begin{aligned}
 \omega(x,y,z) &=  -2 x \ dH^* \wedge dY^* +2y \  dX^*\wedge dY^* - 2z \  dX^* \wedge dH^* \\ &= -2 ( x \ dH^* \wedge dY^* -y \  dX^* \wedge dY^* + z\  dX^* \wedge dH^* ). \end{aligned}
 \end{equation} 
It is easy to check that if we  parametrize the sphere using the spherical coordinates, this form is rewritten as twice the usual surface area element: 
$\omega(\varphi, \theta)=2\sin\varphi$. We leave this check as an exercise. 

\subsubsection{General semi-simple orbits in ${\mathfrak{so}}_3(\R)$ and $\fsl_2(\R)$} 
Since the group $\SO_3(\R)$ is  compact, all its maximal tori are conjugate; consequently, every semisimple element in ${\mathfrak {so}}_3(\R)$ is conjugate to $rH$ for some $r\in \R$ (and all semisimple orbits are spheres). It is clear that if we replace $H^\ast$ with $rH^\ast$, the form in (\ref{eq:omega1}) gets scaled by $r$: 
$\omega_{rH^\ast} =2r dx\wedge dz$, so it is again the natural area element on a sphere of radius $r$. 

We also note that all our calculations for these algebraic volume forms are valid over any field of characteristic different from $2$ (the only reason we were working over the reals is the nice geometric picture and the intuitive parametric equations for the orbits as surfaces; note that despite our use of these 
transcendental parametrizations, in the end all the differential forms are algebraic). 

Returning to semi-simple orbits in $\fsl_2(\R)$, we can use the accidental isomorphism: over $\C$,  $\fsl_2$ and $\mathfrak{s0}_3$ are isomorphic. Thus, the same calculation as above shows also that the value at the element $t\bh$ of the  Kirillov form on the orbit 
 of $t{\bh}\in \SL_2(\R)$ is $2t dx\wedge dy$ (note that the $y$- and $z$-coordinates are swapped in \S \ref{sub:sl2R} and 
 \S \ref {sub:so3R}); this uniquely determines the invariant form on the orbit. 
 
How does this form relate to the volume form $\omega_c^\geom$ defined by (\ref{eq:geom_lie1}) in \S \ref{sub:lie}? 
Using the coordinates of \ref{sub:sl2R}, Chevalley map is given by 
$$\left[\begin{smallmatrix} z/2 & x \\ y & -z/2\end{smallmatrix}\right] \mapsto -\frac{z^2}4-xy. $$
The geometric measure is defined as a quotient: 
$dx\wedge dy \wedge {\frac12 dz} = \omega_c^\geom \wedge dc$, where $c= -\frac{z^2}4-xy$. 
Evaluating all the forms at the point $t\bh$ (which corresponds to $x=y=0$, $z=2t$, and thus $c=-t^2$), we see that 
$\omega_c^\geom$ must satisfy 
$$(\omega_c^\geom)_{t\bh}\wedge (-2t dt) = dx\wedge dy\wedge dt,$$ 
and therefore,   
$(\omega_c^\geom)_{t\bh}=-\frac1{2t}dx\wedge dy$. 
We obtain the conversion coefficient between  Kirillov's form and the geometric form:  on the orbit of a split semi-simple element $t\bh$ it is $-\frac1{4t^2}=-D(t\bh)^{-1}$. 
It would be interesting to find this coefficient for a general reductive Lie algebra.

\bibliographystyle{amsalpha}
\bibliography{latest_biblio}

\end{document}